\newtheorem{theorem}{Theorem}[section]
\newtheorem{conjecture}[theorem]{Conjecture}
\newtheorem{corollary}[theorem]{Corollary}
\newtheorem{definition}[theorem]{Definition}
\newtheorem{lemma}[theorem]{Lemma}
\newtheorem{proposition}[theorem]{Proposition}
\newlength{\espaceavantspecialthm}
\newlength{\espaceapresspecialthm}
\newenvironment{rema}[1][]{\refstepcounter{theorem} 
\noindent \textbf{Remark \thetheorem
#1.} }%
\newenvironment{defi*}[1][]{
\vskip \espaceavantspecialthm \noindent \textbf{D\'efinition.} }%
{\vskip \espaceapresspecialthm}
\begin{document}

\sloppy

\title{Conjugacy class of homeomorphisms and distortion elements in groups of homeomorphisms}
\author{E. Militon \thanks{Laboratoire J.A. Dieudonné. UMR n° 7351 CNRS UNS Université de Nice - Sophia Antipolis 06108 Nice Cedex 02 France e-mail: emmanuel.militon@unice.fr}}
\date{\today}
\maketitle

\setlength{\parskip}{10pt}

\selectlanguage{english}

\begin{abstract}
Let $S$ be a compact connected surface and let $f$ be an element of the group $\mathrm{Homeo}_{0}(S)$ of homeomorphisms of $S$ isotopic to the identity. Denote by $\tilde{f}$ a lift of $f$ to the universal cover of $S$. Fix a fundamental domain $D$ of this universal cover. The homeomorphism $f$ is said to be non-spreading if the sequence $(d_{n}/n)$ converges to $0$, where $d_{n}$ is the diameter of $\tilde{f}^{n}(D)$. Let us suppose now that the surface $S$ is orientable with a nonempty boundary. We prove that, if $S$ is different from the annulus and from the disc, a homeomorphism is non-spreading if and only if it has conjugates in $\mathrm{Homeo}_{0}(S)$ arbitrarily close to the identity. In the case where the surface $S$ is the annulus, we prove that a homeomorphism is non-spreading if and only if it has conjugates in $\mathrm{Homeo}_{0}(S)$ arbitrarily close to a rotation (this was already known in most cases by a theorem by Béguin, Crovisier, Le Roux and Patou). We deduce that, for such surfaces $S$, an element of $\mathrm{Homeo}_{0}(S)$ is distorted if and only if it is non-spreading.
\end{abstract}

\section{Conjugacy classes of non-spreading homeomorphisms}

The rotation number is a famous dynamical invariant introduced by Poincaré to study the dynamics of homeomorphisms of the circle. The dynamics of a homeomorphism of the circle will "look like" the dynamics of the rotation of angle $\alpha$ when the rotation number of this homeomorphism is $\alpha$. However, it is known that, for any $\alpha$, there exist homeomorphisms with rotation number $\alpha$ which are not conjugate to a rotation. One can solve this problem by classifying the homeomorphisms up to semi-conjugacy. There might be yet another approach to solve this problem: it is not difficult to prove the following proposition (see Section \ref{sectcircle} for more details).

\begin{proposition} \label{conjugacyclasscircle} For any homeomorphism of the circle with rotation number $\alpha$, the closure of the conjugacy class of this homeomorphism contains the rotation of angle $\alpha$. 
\end{proposition}

Actually, this last property characterizes the homeomorphisms of the circle with rotation number $\alpha$. In this article, we pursue this approach in the case of homeomorphisms of surfaces.

To generalize the notion of rotation number, Misiurewicz and Ziemian introduced the notion of rotation set of a homeomorphism of the two-dimensional torus isotopic to the identity (see \cite{MZ}). With the same approach, one can define the notion of rotation set of a homeomorphism of the closed annulus $\mathbb{A}=[0,1] \times \mathbb{S}^{1}$. Unlike the case of the circle, two orbits can have different asymptotic directions or different linear speeds: in those cases, the rotation set will contain more than one point and one can prove that the closure of the conjugacy class of the homeomorphism does not contain a rotation. Indeed the rotation set is continuous for the Hausdorff topology at rotations (see Corollary 3.7 in \cite{MZ}). Now, we investigate the case where the rotation set of the homeomorphism is reduced to a point. We call such homeomorphisms \emph{pseudo-rotations}. The only point in the rotation set of such a pseudo-rotation is called the \emph{angle} of this pseudo-rotation. In \cite{BCLRP}, Béguin, Crovisier, Le Roux and Patou proved the following theorem (see Corollary 1.2 in \cite{BCLRP}). The group $\mathrm{Homeo}_{0}(\mathbb{A})$ of homeomorphisms of $\mathbb{A}$ which are isotopic to the identity is endowed with the compact-open topology.

\begin{theorem}[Béguin-Crovisier-Le Roux-Patou] \label{BCLRP}
Let $f$ be a homeomorphism in $\mathrm{Homeo}_{0}(\mathbb{A})$. Suppose that $f$ is a pseudo-rotation of irrational angle $\alpha$. Then the closure of the conjugacy class of $f$ in $\mathrm{Homeo}_{0}(\mathbb{A})$ contains the rotation $R_{\alpha}$.
\end{theorem}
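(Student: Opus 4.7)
The plan is to construct, for every $\varepsilon>0$, a homeomorphism $h_{\varepsilon}\in\mathrm{Homeo}_{0}(\mathbb{A})$ such that the compact-open distance between $h_{\varepsilon}fh_{\varepsilon}^{-1}$ and $R_{\alpha}$ is less than $\varepsilon$. The conjugating homeomorphism should match a topological tiling of $\mathbb{A}$ adapted to the dynamics of $f$ with the standard tiling of $\mathbb{A}$ on which $R_{\alpha}$ acts by an explicit combinatorial shift; once the tiles are small enough, any homeomorphism inducing on such a fine tiling the same combinatorial permutation as $R_{\alpha}$ must be uniformly close to $R_{\alpha}$.

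The first step exploits Diophantine approximations $p_{n}/q_{n}$ of $\alpha$. The assumption that the rotation set of $f$ equals $\{\alpha\}$ provides uniform control on the deviation $\mathrm{pr}_{2}(\tilde{f}^{n}(\tilde{x}))-\mathrm{pr}_{2}(\tilde{x})-n\alpha$, so for the well-chosen integers $n=q_{n}$ the iterate $\tilde{f}^{q_{n}}$ becomes arbitrarily close, modulo the deck transformation $(x,y)\mapsto(x,y+p_{n})$, to the identity on the universal cover $[0,1]\times\mathbb{R}$. The target tiling is then a family of $q_{n}$ pairwise disjoint essential arcs $\gamma_{0},\ldots,\gamma_{q_{n}-1}\subset\mathbb{A}$ joining the two boundary components, cyclically ordered so that $f(\gamma_{k})$ is close to $\gamma_{k+p_{n}\bmod q_{n}}$ for every $k$. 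The homeomorphism $h_{\varepsilon}$ is then defined by sending each reference arc $[0,1]\times\{k/q_{n}\}$ onto $\gamma_{k}$ and extending across the $q_{n}$ resulting rectangles in a controlled way.

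The main obstacle, in my view, is the construction of this arc system. The naive choice $\gamma_{k}:=f^{k}(\gamma_{0})$ from a free essential arc for $f^{q_{n}}$ need not produce pairwise disjoint arcs, and one must use the pseudo-rotation hypothesis in a nontrivial way to rule out essential intersections between the different iterates. Here the assumption that the rotation set is \emph{exactly one} irrational point is decisive: if two orbits of $\tilde{f}$ had distinct asymptotic rotations then essential crossings of the arcs $f^{i}(\gamma_{0})$ and $f^{j}(\gamma_{0})$ would be unavoidable, whereas under our hypothesis no such topological obstruction remains. A rigorous construction should go through equivariant surface-dynamics tools such as Brouwer-type theorems or Le Calvez's theory of transverse foliations for annulus homeomorphisms, producing after a small isotopy an arc system with the required disjointness and combinatorics. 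Once the arcs are at hand, it suffices to choose $n$ so that $1/q_{n}<\varepsilon$ and that the deviation of $\tilde{f}^{q_{n}}$ from the deck translation by $p_{n}$ is at most $\varepsilon$: this forces $h_{\varepsilon}fh_{\varepsilon}^{-1}$ to differ from $R_{\alpha}$ by at most $O(\varepsilon)$ on every tile, which completes the argument.
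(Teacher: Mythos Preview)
The paper does not itself prove this theorem: it is quoted as Corollary~1.2 of \cite{BCLRP}, and in the proof of Theorem~\ref{conjugacyclassannulus} the irrational case is simply delegated to that reference. The relevant comparison is therefore with the BCLRP machinery the paper restates (Lemma~\ref{commutinghomeos}) and with the paper's parallel treatment of the rational case.

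Your overall shape is right, but there are two genuine gaps. First, the construction of the disjoint arc system --- which you correctly flag as the main obstacle --- is deferred to unnamed ``Brouwer-type theorems or Le Calvez's theory of transverse foliations''. The actual tool is specific and is the crux of the BCLRP argument: if $F_{1},\dots,F_{l}$ are pairwise commuting lifts in $\mathrm{Homeo}_{\mathbb Z}(\mathbb R\times[0,1])$ each with rotation set contained in $(0,+\infty)$, then there exists a single arc $\tilde\gamma$ joining the two boundary lines that every $F_{i}$ pushes strictly to the right (Lemma~\ref{commutinghomeos} here, Proposition~3.1 in \cite{BCLRP}). One applies this to well-chosen words in $\tilde f$ and the deck translation $T$, and the arc system is then \emph{defined} as iterates of $\tilde\gamma$ under $f$; the disjointness you worry about is a direct consequence of the lemma, not something to be repaired afterwards by an isotopy.

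Second, and more seriously, a one-dimensional family of vertical arcs is not enough. Even with a perfect family $\gamma_{0},\dots,\gamma_{q_{n}-1}$ such that $f(\gamma_{k})$ lies between $\gamma_{k+p_{n}-1}$ and $\gamma_{k+p_{n}+1}$, the conjugate $h_{\varepsilon}fh_{\varepsilon}^{-1}$ is constrained only in the angular coordinate: a point at radial height $t$ in a standard strip may be sent to any height $t'\in[0,1]$, so $h_{\varepsilon}fh_{\varepsilon}^{-1}$ need not be close to $R_{\alpha}$. The phrase ``extending across the $q_{n}$ resulting rectangles in a controlled way'' does not supply this control, since the extension of $h_{\varepsilon}$ across a strip cannot by itself tame what $f$ does inside that strip. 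In the paper's proof of Theorem~\ref{conjugacyclassannulus} (which follows the BCLRP scheme) this is handled by superimposing a second family of pairwise disjoint horizontal essential loops $\alpha_{0},\dots,\alpha_{N'}$ with $f(\alpha_{j})$ trapped between $\alpha_{j-1}$ and $\alpha_{j+1}$, producing a genuine two-dimensional grid of small discs $D_{i,j}$; only then does one get $hfh^{-1}(D'_{i,j})$ contained in a $3\times 3$ block of standard squares, and hence uniform closeness to the rotation.
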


The following theorem is a consequence of Theorem 1.2 in \cite{Kwa}, which is due to Kwapisz.

\begin{theorem}[Kwapisz] \label{Kwapisz}
Let $f$ be a pseudo-rotation of $\mathbb{T}^{2}$ which is a $C^{1}$ diffeomorphism of $\mathbb{T}^2$. Suppose that there exists a representative $(\alpha_{1}, \alpha_{2})$ in $\mathbb{R}^{2}$ of the angle of the pseudo-rotation $f$ such that the real numbers $1$, $\alpha_{1}$ and $\alpha_{2}$ are $\mathbb{Q}$-linearly independent. Then the homeomorphism $f$ has conjugates in $\mathrm{Homeo}_{0}(\mathbb{T}^{2})$ arbitrarily close to the rotation of $\mathbb{T}^{2}$ defined by $(x,y) \mapsto (x+ \alpha_{1}, y + \alpha_{2})$.
\end{theorem}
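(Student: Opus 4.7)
The plan is to, given $\varepsilon > 0$, construct a homeomorphism $h \in \mathrm{Homeo}_{0}(\mathbb{T}^{2})$ such that $hfh^{-1}$ is $\varepsilon$-close in the compact-open topology to the rotation $R := R_{(\alpha_{1},\alpha_{2})}$. The overall strategy I would follow parallels the annulus case of Theorem~\ref{BCLRP}: use Kwapisz's theorem to obtain, at arbitrarily fine scale, a cellular decomposition of $\mathbb{T}^{2}$ on which $f$ acts by exactly the same combinatorial permutation that $R$ induces on a corresponding ``rigid'' decomposition, and then straighten the former decomposition onto the latter via $h$.

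First I would invoke Kwapisz's Theorem~1.2 in the following form: under the $\mathbb{Q}$-linear independence of $1,\alpha_{1},\alpha_{2}$, for every integer $N$ large enough there exist a covering of $\mathbb{T}^{2}$ by closed topological disks $\{F_{j}\}_{j \in J}$ with pairwise disjoint interiors and diameters at most $1/N$, together with a bijection $\sigma : J \to J$ satisfying $f(F_{j}) = F_{\sigma(j)}$, while an analogously indexed ``rigid'' decomposition $\{G_{j}\}_{j \in J}$ adapted to $R$ satisfies $R(G_{j}) = G_{\sigma(j)}$. The rational independence hypothesis is what forbids resonant decoupling between the permutation induced by $f$ and the one induced by $R$; without it, the action of $f$ on any such tiling could differ combinatorially from that of $R$.

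I would then construct $h \in \mathrm{Homeo}_{0}(\mathbb{T}^{2})$ that maps $F_{j}$ onto $G_{j}$ for every $j \in J$. By construction one has $hfh^{-1}(G_{j}) = h(f(F_{j})) = h(F_{\sigma(j)}) = G_{\sigma(j)} = R(G_{j})$ for every $j$; hence for any $x \in G_{j}$, both $hfh^{-1}(x)$ and $R(x)$ lie in the same cell $G_{\sigma(j)}$, whose diameter is at most $1/N$. Choosing $N > 1/\varepsilon$ at the outset delivers the desired closeness.

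The hard part, once Kwapisz's combinatorial theorem is taken as a black box, will be assembling the cell-by-cell correspondence $F_{j} \mapsto G_{j}$ into a genuine homeomorphism isotopic to the identity. Extending a matching of vertex and edge data to a homeomorphism on each face is a routine Schoenflies argument on each closed disk, but this requires first fixing compatible identifications of the one-skeletons of the two tilings, and subsequently controlling the homotopy class of $h$ globally. The natural way to handle both issues is to do the construction at the level of the universal cover $\mathbb{R}^{2}$: the lift of $f$ used to define the rotation vector differs from the lift $(x,y) \mapsto (x+\alpha_{1}, y+\alpha_{2})$ of $R$ by a bounded displacement, and by demanding that $h$ admit a $\mathbb{Z}^{2}$-equivariant lift at bounded distance from the identity, one automatically places $h$ in $\mathrm{Homeo}_{0}(\mathbb{T}^{2})$.
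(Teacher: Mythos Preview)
The paper does not prove this theorem: it is stated as a result due to Kwapisz, with the remark that it ``is a consequence of Theorem 1.2 in \cite{Kwa}'', and no argument is supplied. There is therefore no proof in the paper to compare your proposal against.

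Your overall strategy is the natural one, but the form in which you invoke Kwapisz's theorem cannot be correct as written. You assert that $f(F_{j}) = F_{\sigma(j)}$ exactly, and likewise $R(G_{j}) = G_{\sigma(j)}$ for a rigid decomposition $\{G_{j}\}$. But a rotation with totally irrational angle does not permute any finite cellular decomposition of $\mathbb{T}^{2}$: if it did, then since $\sigma$ has some finite order $k$, each cell $G_{j}$ would be invariant under $R^{k}$, which is impossible because $R^{k}$ is minimal under the hypothesis that $1,\alpha_{1},\alpha_{2}$ are $\mathbb{Q}$-linearly independent. The same obstruction applies to $f$. What one actually obtains from Kwapisz's combinatorial equivalence is the weaker containment that $f$ sends each cell into a bounded union of nearby cells, with the \emph{same combinatorial pattern} as $R$ on the corresponding rigid decomposition. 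Compare the annulus argument in Section~4 of this paper, where one only gets that $f(\gamma_{i})$ lies strictly between $\gamma_{i+np-1}$ and $\gamma_{i+np+1}$, not on $\gamma_{i+np}$.

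This weaker statement is still sufficient for your scheme: if $hfh^{-1}(G_{j})$ and $R(G_{j})$ are both contained in the same union of boundedly many adjacent cells of diameter at most $1/N$, the uniform distance between $hfh^{-1}$ and $R$ is bounded by a fixed multiple of $1/N$. So the outline survives once the intermediate claim is corrected; the construction of $h$ via Schoenflies on cells and a $\mathbb{Z}^{2}$-equivariant lift is then exactly as you describe.
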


 The above hypothesis on the angle $(\alpha_{1}, \alpha_{2})$ is the one which ensures that the rotation $(x,y) \mapsto (x+ \alpha_{1}, y + \alpha_{2})$ is minimal (\emph{i.e.} has no proper closed invariant set).

In this article, we investigate the case of rational pseudo-rotations of the annulus and homeomorphisms of compact surfaces $S$ with $\partial S \neq \emptyset$. We first introduce a more precise definition of pseudo-rotations of the annulus which will be useful later. Let $\mathbb{A}$ be the closed annulus $[0,1]\times \mathbb{S}^{1}$.

\begin{definition} \label{pseudo}
A homeomorphism $f$ in $\mathrm{Homeo}_{0}(\mathbb{A})$ is said to be a pseudo-rotation if there exists a lift $\tilde{f}: \mathbb{R} \times [0,1] \rightarrow \mathbb{R} \times [0,1]$ of the homeomorphism $f$ and a real number $\alpha$ such that
$$ \forall \tilde{x} \in \mathbb{R}\times [0,1], \ \lim_{n \rightarrow + \infty} \frac{p_{1}(\tilde{f}^{n}(\tilde{x}))}{n}= \alpha,$$
where $p_{1}: \mathbb{R} \times [0,1] \rightarrow \mathbb{R}$ is the projection. The class of $\alpha$ in $\mathbb{R}/ \mathbb{Z}$ is called the \emph{angle} of the pseudo-rotation $f$.
\end{definition}

Observe that the angle of a pseudo-rotation depends only on $f$ and not on the chosen lift $\tilde{f}$. 

We will prove in Section \ref{sectannulus} the following theorem.

\begin{theorem} \label{conjugacyclassannulus}
Let $f$ be a homeomorphism in $\mathrm{Homeo}_{0}(\mathbb{A})$. Suppose that $f$ is a pseudo-rotation of angle $\alpha$. Then the closure of the conjugacy class of $f$ in $\mathrm{Homeo}_{0}(\mathbb{A})$ contains the rotation $R_{\alpha}$, where
$$\begin{array}{rrcl}
R_{\alpha}: & \mathbb{A}= [0,1] \times \mathbb{R} / \mathbb{Z} & \rightarrow & \mathbb{A} \\
 & (t,x) & \mapsto & (t,x+ \alpha)
\end{array}
.$$ 
\end{theorem}

This theorem is an extension of Theorem \ref{BCLRP} in the case where the angle $\alpha$ is rational. We also have an analogous theorem in the case of the unit disc $\mathbb{D}^2$ of $\mathbb{R}^2$. In this case, for $\alpha \in \mathbb{R}/\mathbb{Z}$, if we see $\mathbb{D}^2$ as the complex unit disc, we define the rotation $R_{\alpha}$ as the map
$$\begin{array}{rrcl}
R_{\alpha}: & \mathbb{D}^2 & \rightarrow & \mathbb{D}^2 \\
 & z & \mapsto & e^{2i \pi \alpha}
\end{array}
.$$

\begin{theorem} \label{conjugacyclassdisc}
Let $f$ be homeomorphism in $\mathrm{Homeo}_{0}(\mathbb{D}^2)$. Suppose that its restriction to the boundary circle $\partial \mathbb{D}^2$ has rotation number $\alpha \in \mathbb{R}/ \mathbb{Z}$. Then the homeomorphism $f$ has conjugates arbitrarily close to the rotation $R_{\alpha}$.
\end{theorem}

To state the next theorem, we need to extend the notion of pseudo-rotation to the context of a homeomorphism of an arbitrary surface. Let $S$ be a surface. We denote by $\tilde{S}$ its universal cover which we endow with a "natural" distance, \emph{i.e.} a distance which is invariant under the group of deck transformations. For a subset $A \subset \tilde{S}$, we denote by $\mathring{A}$ its interior and by $\mathrm{diam}(A)$ its diameter.

\begin{definition}
We call \emph{fundamental domain} of $\tilde{S}$ (for the action of the group $\pi_{1}(S)$ of deck transformations of $\tilde{S}$) any compact connected subset $D$ of $\tilde{S}$ which satisfies the following properties:
\begin{enumerate}
\item $\Pi(D)=S$, where $\Pi: \tilde{S} \rightarrow S$ is the projection.
\item For any deck transformation $\gamma$ in $\pi_{1}(S)$ different from the identity, $\mathring{D} \cap \mathring{\gamma(D)}= \emptyset$. 
\end{enumerate}
\end{definition}

Fix a fundamental domain $D$ for the action of the group of deck transformations of the covering $\tilde{S} \rightarrow S$. For any homeomorphism $f$ of $S$ isotopic to the identity, we denote by $\tilde{f}: \tilde{S} \rightarrow \tilde{S}$ a lift of $f$ which is the time one of the lift starting from the identity of an isotopy between the identity and the homeomorphism $f$. By classical results by Hamström (see \cite{Ham}), such a homeomorphism $\tilde{f}$ is unique if the surface is different from the torus, the Klein bottle, the Möbius strip or the annulus. Moreover, for any deck transformation $\gamma$, $\gamma \tilde{f}= \tilde{f} \gamma$, by uniqueness of the lift of an isotopy from the identity to $f$ starting from $\gamma$. Denote by $\mathrm{Homeo}_{0}(S)$ the group of homeomorphisms of $S$ isotopic to the identity.

\begin{definition}
A homeomorphism $f$ in $\mathrm{Homeo}_{0}(S)$ is called non-spreading if $\lim_{n \rightarrow + \infty} \frac{\mathrm{diam}(\tilde{f}^{n}(D))}{n}=0$. 
\end{definition}

\begin{rema}
The condition $\lim_{n \rightarrow + \infty} \frac{\mathrm{diam}(\tilde{f}^{n}(D))}{n}=0$ is independent of the chosen fundamental domain $D$ (see Proposition 3.4 in \cite{Mil}).
\end{rema}

\begin{rema} In the case where the surface is an annulus or the torus, a homeomorphism is non-spreading if and only if it is a pseudo-rotation. Indeed, in the case of the torus, the sequence $\frac{1}{n} \tilde{f}^{n}(D)$ of compact subsets of $\mathbb{R}^{2}$ converges to the rotation set of $\tilde{f}$ for the Hausdorff topology  (see \cite{MZ}).
\end{rema}

In Section 5, we prove the following theorem.

\begin{theorem} \label{conjugacyclassboundary}
Let $S$ be a compact surface with $\partial S \neq \emptyset$ which is different from the disc, the annulus or the Möbius strip. For any non-spreading homeomorphism $f$ of $S$, the closure of the conjugacy class of $f$ in $\mathrm{Homeo}_{0}(S)$ contains the identity.
\end{theorem}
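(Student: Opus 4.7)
The plan is to exhibit, for every compact $K \subset S$ and every $\epsilon > 0$, a homeomorphism $h \in \mathrm{Homeo}_{0}(S)$ such that $h f h^{-1}$ moves each point of $K$ by less than $\epsilon$. I would proceed in three stages: a fragmentation of $f$ whose combinatorics is controlled by the non-spreading hypothesis, a simultaneous shrinking of the resulting supports by a single equivariant conjugating map, and a verification that the conjugate is close to the identity.

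The first stage is a fragmentation step. By a classical fragmentation lemma for $\mathrm{Homeo}_{0}(S)$, one writes $f$ as a product $f_{1} f_{2} \cdots f_{k}$ where each $f_{i}$ is supported in an embedded open disc $D_{i} \subset S$. The non-spreading hypothesis enters here to control the combinatorics of the $D_{i}$: by examining the orbit $\bigcup_{j=0}^{n}\tilde{f}^{j}(D)$, which by assumption has diameter $o(n)$, one can choose the $D_{i}$ so that their lifts to $\tilde{S}$ are organised along a subtree of the Cayley graph of $\pi_{1}(S)$ whose size and geometry are controlled.

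The second stage is the heart of the proof: the construction of the conjugating map. Because $S$ is not the disc, the annulus, or the M\"obius strip, $\pi_{1}(S)$ is a non-abelian free group and $\tilde{S}$ carries a tree-like combinatorial structure with abundant independent axes. Using this, I would build an equivariant homeomorphism $\tilde{h}$ of $\tilde{S}$ whose quotient $h \in \mathrm{Homeo}_{0}(S)$ simultaneously squeezes the discs $D_{1}, \ldots, D_{k}$ into pairwise disjoint discs of diameter less than $\epsilon$. The equivariance under $\pi_{1}(S)$ forces compatibility of the squeeze with all deck translates; this should be achieved by a ping-pong-style argument, performing the squeeze independently on each petal of a well-chosen tree decomposition of $\tilde{S}$ dual to a pants-like decomposition of $S$, and exploiting the free structure to prevent interference between distinct petals.

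Once the images $h(D_{i})$ are small and pairwise disjoint, the conclusion is immediate: $h f h^{-1} = (h f_{1} h^{-1})(h f_{2} h^{-1}) \cdots (h f_{k} h^{-1})$ is supported in the disjoint union $\bigsqcup_{i} h(D_{i})$ and, within each component, moves points by at most the diameter of that component, hence by less than $\epsilon$. The main obstacle I anticipate is the equivariant simultaneous shrinking in the second stage: any approach that treats one fundamental domain at a time must ensure that the infinitely many deck translates of the shrinkings remain consistent and that the images $h(D_{i})$ are disjoint in $S$ itself, not merely in $\tilde{S}$ modulo $\pi_{1}(S)$. This is precisely where the hypothesis on $S$ (excluding the three exceptional surfaces) is essential, and where non-spreading supplies the necessary bound on the size of the subtree supporting the fragmentation so that the shrinking can be carried out with room to spare.
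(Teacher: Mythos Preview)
Your proposal has a genuine gap at the second stage, and it is topological rather than merely technical. A homeomorphism $h$ preserves incidences: if $D_{i}\cap D_{j}\neq\emptyset$ then $h(D_{i})\cap h(D_{j})\neq\emptyset$, so $h$ can never carry overlapping discs to pairwise disjoint ones. In any fragmentation $f=f_{1}\cdots f_{k}$ the supporting discs $D_{i}$ must overlap (their union contains the support of $f$, which for a generic non-spreading $f$ is all of $\mathrm{int}(S)$), so the target configuration you describe is unreachable. Even the weaker goal of making every $h(D_{i})$ have diameter $<\epsilon$ fails for the same reason: $h\bigl(\bigcup_{i}D_{i}\bigr)=\bigcup_{i}h(D_{i})$ still covers $h(\mathrm{supp}(f))$, which has the diameter of $S$ whenever $\mathrm{supp}(f)=\mathrm{int}(S)$. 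There is also a mismatch in how you invoke non-spreading: that hypothesis constrains the asymptotics of $\tilde{f}^{n}$, not the fragmentation data of the single map $f$, and your first stage never explains how a bound on $\mathrm{diam}(\tilde{f}^{n}(D))/n$ translates into any control on the discs $D_{1},\ldots,D_{k}$.

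The paper's argument avoids this obstruction by abandoning the idea of shrinking supports altogether. Instead it builds, inside a fixed fundamental domain $D_{0}$, a fine grid of arcs $\tilde{\beta}_{i},\tilde{\delta}_{i}$ adapted to $f$: the non-spreading hypothesis is used (via a \v{S}varc--Milnor argument) to show that only finitely many deck-translated iterates $\gamma\tilde{f}^{k}(\tilde{\alpha})$ can meet a given essential arc, and an inductive $\inf/\sup$ construction on arcs then produces a maximal family of essential arcs whose $\tilde{f}$-iterates up to time $N$ stay pairwise disjoint. From this one manufactures a grid whose cells are permuted by $f$ only to immediate neighbours. The conjugating $h$ then sends this adapted grid to a standard $\epsilon$-fine grid; the conclusion is not that $hfh^{-1}$ has small support, but that it moves every small square into the union of the adjacent squares, hence displaces points by $<\epsilon$. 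This ``grid-straightening'' mechanism is what you are missing.
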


\begin{rema}
The theorem remains true when we replace the group $\mathrm{Homeo}_{0}(S)$ with the identity component $\mathrm{Homeo}_{0}(S, \partial S)$ of the group of homeomorphisms of $S$ which pointwise fix a neighbourhood of the boundary. The proof in this case is almost identical to the proof in the case of the group $\mathrm{Homeo}_{0}(S)$.
\end{rema}

\begin{rema} This property characterizes the non-spreading homeomorphisms of such a surface $S$: if a homeomorphism isotopic to the identity satisfies this property, then we will see that it is a distorted element in $\mathrm{Homeo}_{0}(S)$ (the notion of distorted elements will be explained in the next section). Moreover, we will see in the next section that distortion elements in $\mathrm{Homeo}_{0}(S)$ are non-spreading.
\end{rema}

The following conjecture is natural.

\begin{conjecture} \label{conjugacyclasstorus} The closure of the conjugacy class of any pseudo-rotation of the torus of angle $\alpha$ contains the translation of angle $\alpha$. The closure of the conjugacy class of any non-spreading homeomorphism of a closed surface $S$ of genus $g \geq 2$ contains the identity.
\end{conjecture}

\section{Distortion elements in groups of homeomorphisms of surfaces}

In this article, we pursue the study of distorted elements (see definition below) in groups of homeomorphisms of manifolds initiated in the article \cite{Mil}. For more background on distortion elements in groups of homeomorphisms or diffeomorphisms of manifolds, see \cite{Mil}.

Let $G$ be a finitely generated group and $\mathcal{G}$ be a finite generating set of $G$. For any element $g$ in $G$, we denote by $l_{\mathcal{G}}(g)$ the minimal number of factors in a decomposition of $g$ as a product of elements of $\mathcal{G} \cup \mathcal{G}^{-1}$.

\begin{definition}
Let $G$ be any group. An element $g$ in $G$ is said to be \emph{distorted} (or is a \emph{distortion element}) if there exists a finite subset $\mathcal{G} \subset G$ such that the following properties are satisfied.
\begin{enumerate}
\item The element $g$ belongs to the group generated by $\mathcal{G}$.
\item $\lim_{n \rightarrow + \infty} l_{\mathcal{G}}(g^{n})/n=0$.
\end{enumerate}
\end{definition}

Let $M$ be a compact manifold. We denote by $\mathrm{Homeo}_{0}(M)$ the group of compactly-supported homeomorphisms which are isotopic to the identity. We endow the group $\mathrm{Homeo}_{0}(M)$ with the compact-open topology. For any manifold $M$, we denote by $\tilde{M}$ its universal cover.

Recall the following easy proposition (see Proposition 2.4 in \cite{Mil} for a proof).

\begin{proposition} \label{distorsionimpliquecroissance}
Let $S$ be a compact surface. Denote by $D$ a fundamental domain of $\tilde{S}$ for the action of $\pi_{1}(S)$.\\
If a homeomorphism $f$ in $\mathrm{Homeo}_{0}(S)$ is a distortion element in $\mathrm{Homeo}_{0}(S)$, then $f$ is non-spreading.
\end{proposition}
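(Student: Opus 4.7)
The plan is to bound the diameter $\mathrm{diam}(\tilde{f}^{n}(D))$ linearly in the word length $l_{\mathcal{G}}(f^{n})$, and then divide by $n$ and invoke the distortion hypothesis.

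First I would choose a finite set $\mathcal{G} \subset \mathrm{Homeo}_{0}(S)$ realizing the distortion of $f$, so that $f$ belongs to $\langle \mathcal{G}\rangle$ and $l_{\mathcal{G}}(f^{n})/n \to 0$. For each $g \in \mathcal{G}$ let $\tilde{g}$ denote the canonical lift arising from an isotopy to the identity, so that $\tilde{g}$ commutes with every deck transformation $\gamma \in \pi_{1}(S)$. The key geometric observation is that the displacement function $\tilde{x} \mapsto d(\tilde{x}, \tilde{g}(\tilde{x}))$ is invariant under $\pi_{1}(S)$ (because deck transformations act by isometries and $\tilde{g}$ commutes with them), hence descends to a continuous function on the compact surface $S$. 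This yields a uniform bound
\[
d(\tilde{x}, \tilde{g}^{\pm 1}(\tilde{x})) \leq C \qquad \text{for all } \tilde{x} \in \tilde{S},
\]
where $C$ is the maximum over the finitely many $g \in \mathcal{G}$ of the two constants obtained in this way.

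Next I would write $f^{n} = h_{1} \cdots h_{L_{n}}$ with each $h_{j} \in \mathcal{G} \cup \mathcal{G}^{-1}$ and $L_{n} = l_{\mathcal{G}}(f^{n})$. Taking canonical lifts and using that the map $h \mapsto \tilde{h}$ is a group homomorphism (the concatenation of the chosen isotopies for the $h_{j}$ lifts to the composition of the individual lifts, whose time-one map is indeed $\tilde{f}^{n}$), one gets $\tilde{f}^{n} = \tilde{h}_{1} \cdots \tilde{h}_{L_{n}}$. A telescoping inequality then gives, for every $\tilde{x} \in \tilde{S}$,
\[
d(\tilde{x}, \tilde{f}^{n}(\tilde{x})) \leq \sum_{j=1}^{L_{n}} d\bigl(\tilde{h}_{j+1}\cdots\tilde{h}_{L_{n}}(\tilde{x}),\ \tilde{h}_{j}\tilde{h}_{j+1}\cdots\tilde{h}_{L_{n}}(\tilde{x})\bigr) \leq L_{n}\, C.
\]
A triangle inequality on $D$ then yields
\[
\mathrm{diam}(\tilde{f}^{n}(D)) \leq \mathrm{diam}(D) + 2 L_{n} C.
\]
Dividing by $n$ and using $L_{n}/n \to 0$ gives $\mathrm{diam}(\tilde{f}^{n}(D))/n \to 0$, which is the definition of non-spreading.

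I expect the main delicate point to be the assertion that the chosen lift $\tilde{f}^{n}$ appearing in the definition of non-spreading actually coincides with the product $\tilde{h}_{1}\cdots \tilde{h}_{L_{n}}$ of canonical generator lifts: this is automatic when the lift of every element of $\mathrm{Homeo}_{0}(S)$ is unique, but for the exceptional surfaces (annulus, torus, Klein bottle, M\"obius strip) one must verify that the assignment $h \mapsto \tilde{h}$ is a well-defined homomorphism, or alternatively absorb a bounded deck-transformation correction (which is harmless since it is independent of $n$ once the factorization is fixed, or more robustly because the non-spreading condition depends only on $f$ after Proposition~3.4 of \cite{Mil}). The rest is routine compactness and word-length bookkeeping.
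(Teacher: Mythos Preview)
Your argument is correct and is precisely the standard proof of this fact. The paper itself does not supply a proof of this proposition; it simply refers the reader to Proposition~2.4 in \cite{Mil}, where the argument is exactly the one you outline: bound the displacement of each generator's canonical lift uniformly (using $\pi_{1}(S)$-equivariance and compactness of $S$), telescope to get $d(\tilde{x},\tilde{f}^{n}(\tilde{x}))\leq C\,l_{\mathcal{G}}(f^{n})$, and divide by $n$.

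Your caveat about the exceptional surfaces is well placed, and your resolution is the right one. The clean way to phrase it is that the product $\tilde{h}_{1}\cdots\tilde{h}_{L_{n}}$ is \emph{some} lift of $f^{n}$ commuting with every deck transformation, hence differs from the fixed lift $\tilde{f}^{n}$ by a deck transformation; since deck transformations are isometries, $\mathrm{diam}(\tilde{f}^{n}(D))$ is unchanged, and the conclusion follows. So there is no genuine gap, only the bookkeeping you already anticipated.
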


We conjectured that an element in $\mathrm{Homeo}_{0}(S)$ which satisfies the conclusion of this proposition is distorted. However, we were not able to prove it and we just proved the following weaker statement (see Theorem 2.6 in \cite{Mil}).

\begin{theorem} \label{croissanceimpliquedistorsion}
Let $f$ be a homeomorphism in $\mathrm{Homeo}_{0}(S)$. If
$$\liminf_{n \rightarrow + \infty} \frac{\mathrm{diam}(\tilde{f}^{n}(D)) \mathrm{log}(\mathrm{diam}(\tilde{f}^{n}(D)))}{n}=0,$$
then $f$ is a distortion element in $\mathrm{Homeo}_{0}(S)$.
\end{theorem}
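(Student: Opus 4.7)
The plan is to combine a fragmentation of $f^{n}$ on $S$ with the word-length compression trick of Avila and Calegari-Freedman, adapted to surfaces. Extract a subsequence $(n_k)$ realising the liminf, so that $\delta_k := \delta(\tilde f^{n_k}(D))$ satisfies $\delta_k \log \delta_k = o(n_k)$. Since $n \mapsto \ell_{\mathcal G}(f^n)$ is subadditive, Fekete's lemma reduces the problem to exhibiting a finite set $\mathcal G \subset \mathrm{Homeo}_0(S)$ containing $f$ with $\ell_{\mathcal G}(f^{n_k}) = O(\delta_k \log \delta_k)$ along the subsequence; this subsequential bound automatically upgrades to the true distortion condition $\ell_{\mathcal G}(f^n)/n \to 0$.

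The first step is a fragmentation of $f^{n_k}$ downstairs. Fix once and for all a finite cover of $S$ by topological disks $U_1, \ldots, U_r$, each lifting injectively to $\tilde S$, together with a finite family of ``macro-translations'' $T_1, \ldots, T_s$ in $\mathrm{Homeo}_0(S)$ whose lifts agree, on large regions of $\tilde S$, with a symmetric generating set of the deck group $\pi_1(S)$. Counting the deck translates of the lifted $U_i$ that $\tilde f^{n_k}(D)$ meets, there are at most $N_k$ of them, with $\log N_k = O(\delta_k)$. A standard Edwards-Kirby-type construction, compatible with an isotopy from $f^{n_k}$ to the identity, then expresses $f^{n_k}$ as a product $h_{k,1} \cdots h_{k, N_k}$ of homeomorphisms of $S$, each of the form $\phi_j \, m_{i(j)} \, \phi_j^{-1}$, where the $m_i$ are a fixed finite family of model homeomorphisms supported in the $U_i$ and each $\phi_j$ is a word of length $O(\delta_k)$ in the $T_\ell$.

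To compress this $N_k$-fold product into a short word in $\mathcal G$, one adjoins to $\mathcal G$ a contraction $c_i$ for each $U_i$, that is, a homeomorphism sending $U_i$ strictly inside itself. Avila's dyadic packing trick then organises the fragments into roughly $\log_2 N_k$ hierarchical levels: at each level one conjugates half of the currently active fragments by increasing powers of the appropriate $c_i$, thereby packing arbitrarily many disjointly-supported fragments inside a single chart, after which a single recombination step handles the whole level. The total cost per level is of order $\delta_k$ (reflecting the length of the conjugating translations $\phi_j$ that have to be transported through the packing), so the resulting $\mathcal G$-word has length bounded by a constant times $\delta_k \cdot \log N_k = O(\delta_k \log \delta_k)$, as required.

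The main obstacle is implementing the Avila compression on an arbitrary compact surface: no single global contraction exists, and the dyadic packings have to proceed in several charts simultaneously, with the interactions between the $c_i$, the $T_\ell$ and the topology of $S$ (handles, boundary components, non-trivial $\pi_1$) carefully controlled so that every intermediate homeomorphism stays in $\mathrm{Homeo}_0(S)$. Once this geometric setup is in place the combinatorial length estimate is routine, and the hypothesis $\delta_k \log \delta_k = o(n_k)$ is precisely what matches the output of the compression.
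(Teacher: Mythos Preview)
The paper does not prove this theorem here; it is quoted from \cite{Mil}. The machinery that drives that proof is, however, reproduced in Section~3 (the fragmentation lemma, the quantities $a_{C}$, Lemma~\ref{Avila}, Lemma~\ref{gandG}, Proposition~\ref{caractdist}), so one can compare against that.

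Your sketch has the right ingredients but the quantitative core is wrong, and the error is visible internally. You first assert $\log N_{k}=O(\delta_{k})$ --- which on a surface with hyperbolic $\pi_{1}(S)$ is the sharp growth rate for the number of fundamental domains in a ball of radius $\delta_{k}$ --- and then conclude that $\delta_{k}\log N_{k}=O(\delta_{k}\log\delta_{k})$, which would need $\log N_{k}=O(\log\delta_{k})$. These two claims are incompatible; on a higher-genus surface the first is tight and your output is a word of length of order $\delta_{k}^{2}$, not $\delta_{k}\log\delta_{k}$. Relatedly, the ``$\log_{2}N_{k}$ levels, cost $\delta_{k}$ per level'' accounting is not what the Avila trick gives: Lemma~\ref{Avila} says that each of $N$ ball-supported homeomorphisms can be written as a word of length $O(\log N)$ in five fixed generators, so a \emph{product} of $N$ of them still has length $O(N\log N)$, not $O(\delta_{k}\log N)$. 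Finally, the fragments produced by an Edwards--Kirby construction are arbitrary chart-supported homeomorphisms, not conjugates of a ``fixed finite family of model homeomorphisms''; reducing to a fixed finite family is exactly the job of Lemma~\ref{Avila}, and it costs the logarithm.

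What actually does the work in \cite{Mil} is a geometric estimate you have bypassed: the minimal number of chart-supported factors needed to write $g\in\mathrm{Homeo}_{0}(S)$ is bounded \emph{linearly} by $\delta(\tilde g(D))$, not by the (possibly exponential) number of deck translates that $\tilde g(D)$ meets. That linear bound is the substantive part of the theorem and is not a routine fragmentation count. Once one has $a(f^{n})=O(\delta(\tilde f^{n}(D)))$, the extra $\log$ in the hypothesis is exactly the price paid when Lemma~\ref{Avila} converts those $O(\delta)$ factors into words over a single finite generating set (compare the inequality $a_{5N(\mathcal U)}\le(14\log C+14)\,a_{C}$ in Lemma~\ref{gandG}), after which Proposition~\ref{caractdist} yields distortion.
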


In this article, we try to improve the above result. We will prove the following theorem which is the key idea to obtain this improvement.

\begin{definition}
Let $f$ be a homeomorphism in $\mathrm{Homeo}_{0}(M)$. A \emph{conjugate of $f$} is a homeomorphism of the form $hfh^{-1}$, where $h$ is any element of $\mathrm{Homeo}_{0}(M)$.
\end{definition}

\begin{theorem} \label{distcont}
Let $f \in \mathrm{Homeo}_{0}(M)$. Suppose that the homeomorphism $f$ has conjugates arbitrarily close to an element of $\mathrm{Homeo}_{0}(M)$ which is distorted. Then the element $f$ is distorted in $\mathrm{Homeo}_{0}(M)$.
\end{theorem}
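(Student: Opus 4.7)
The plan is to exhibit a finite set $\mathcal{F}\subset\mathrm{Homeo}_{0}(M)$ containing $f$ such that $l_{\mathcal{F}}(f^{n})/n\to 0$. Fix a finite set $\mathcal{G}\subset\mathrm{Homeo}_{0}(M)$ witnessing the distortion of $g$, so $\ell(n):=l_{\mathcal{G}}(g^{n})$ satisfies $\ell(n)/n\to 0$. By hypothesis there is a sequence $h_{k}\in\mathrm{Homeo}_{0}(M)$ with $h_{k}fh_{k}^{-1}\to g$; setting $\epsilon_{k}:=g^{-1}h_{k}fh_{k}^{-1}$ gives $\epsilon_{k}\to\mathrm{id}$ together with the basic identity
\[
f^{n}\;=\;h_{k}^{-1}(g\epsilon_{k})^{n}h_{k}\;=\;h_{k}^{-1}\,g^{n}\prod_{j=0}^{n-1}g^{-j}\epsilon_{k}g^{j}\,h_{k}.
\]
The aim is to fix one sufficiently large index $k_{0}$, adjoin $h_{k_{0}}$ and a few auxiliary elements to $\mathcal{G}$ to form $\mathcal{F}$, and bound the length of the right-hand side by $\ell(n)+O(1)=o(n)$.

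The naive length of this expression is at least $n$, coming from the $n$ conjugates of $\epsilon_{k_{0}}$ by powers of $g$. The key mechanism for compression is a commutator-telescoping identity: if $\epsilon_{k_{0}}$ can be written as $\alpha g^{-1}\alpha^{-1}g$ for some $\alpha\in\mathrm{Homeo}_{0}(M)$, then a direct computation gives $g^{-j}\epsilon_{k_{0}}g^{j}=(g^{-j}\alpha g^{j})(g^{-(j+1)}\alpha^{-1}g^{j+1})$, so that adjacent factors in the $n$-fold product cancel (since $(g^{-(j+1)}\alpha^{-1}g^{j+1})(g^{-(j+1)}\alpha g^{j+1})=\mathrm{id}$) and we obtain
\[
\prod_{j=0}^{n-1}g^{-j}\epsilon_{k_{0}}g^{j}\;=\;\alpha\cdot g^{-n}\alpha^{-1}g^{n},
\]
a word of length $2\ell(n)+2$ in $\mathcal{G}\cup\{\alpha\}$. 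Taking $\mathcal{F}:=\mathcal{G}\cup\{h_{k_{0}},\alpha,\epsilon_{k_{0}}\}$ then yields $l_{\mathcal{F}}(f^{n})\leq 3\ell(n)+O(1)=o(n)$, and $f$ is distorted.

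The main obstacle is producing the commutator decomposition $\epsilon_{k_{0}}=\alpha g^{-1}\alpha^{-1}g$, since an arbitrary element close to the identity need not be of this form. Here one exploits the freedom in the hypothesis: among all conjugators $h$ with $hfh^{-1}$ close to $g$, the goal is to find one whose error $\epsilon=g^{-1}hfh^{-1}$ lies in the image of the commutator map $\alpha\mapsto\alpha g^{-1}\alpha^{-1}g$, equivalently, one for which $\epsilon g^{-1}$ is conjugate to $g^{-1}$ in $\mathrm{Homeo}_{0}(M)$. The strong transitivity and fragmentation properties of $\mathrm{Homeo}_{0}(M)$ should allow us to pre-compose $h$ by an additional conjugation realizing this — or, if necessary, to decompose $\epsilon$ as a bounded product of such commutators (possibly with different base elements), in which case the telescoping identity above is applied termwise. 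Verifying that this flexibility is available for an arbitrary distorted $g$ is the technical heart of the argument.
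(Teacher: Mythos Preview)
Your proposal has a genuine gap at precisely the point you flag as the ``technical heart'': the commutator decomposition $\epsilon_{k_{0}}=\alpha g^{-1}\alpha^{-1}g$ is not available in general, and the argument collapses without it. The cleanest obstruction is the case $g=\mathrm{id}$, which is trivially distorted and is in fact the main case needed for the paper's applications (Theorem~\ref{conjugacyclassboundary}). Then every commutator $[\alpha,g^{-1}]$ is trivial, so your telescoping identity forces $\epsilon_{k_{0}}=\mathrm{id}$, i.e.\ $f$ is already conjugate to $g$---which is not what the hypothesis gives you. The fallback you sketch (a bounded product of commutators ``with different base elements'') does not repair this: if the bases are powers of $g$ the same obstruction applies, and if they are other generators $s_{i}\in\mathcal{G}$ then the adjacent factors in $\prod_{j}g^{-j}\epsilon g^{j}$ no longer cancel, so you are back to a word of length $\gtrsim n$. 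Invoking ``strong transitivity and fragmentation'' does not help here: those properties say nothing about which elements are commutators with a \emph{fixed} prescribed $g^{-1}$.

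The paper's route is quite different and sidesteps this algebra entirely. It defines a fragmentation-based quantity $a_{C}(f)$ (the minimal length of a decomposition of $f$ as a product of homeomorphisms supported in balls of a fixed cover $\mathcal{U}$, using at most $C$ distinct factors) and sets $g_{C}(f)=\liminf_{n} a_{C}(f^{n})/n$. Two ingredients then do all the work: (i) $g_{C}(f)=0$ iff $f$ is distorted, the hard direction resting on a result from \cite{Mil}; and (ii) $g_{C}$ is continuous at distorted elements. Continuity replaces your telescoping trick: by the fragmentation lemma there is a neighbourhood of the identity on which $a_{5N(\mathcal{U})}$ is uniformly bounded, so for any fixed $p$ and $hfh^{-1}$ close enough to $g$ one gets $a_{10N(\mathcal{U})}((hfh^{-1})^{p})\le a_{5N(\mathcal{U})}(g^{p})+O(1)$, whence $g_{C}(hfh^{-1})$ is small. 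Finally the elementary inequality $a_{2C}(f^{n})\le a_{C}(hf^{n}h^{-1})+2a_{C}(h)$ gives $g_{2C}(f)\le g_{C}(hfh^{-1})$, transferring the vanishing to $f$ with no need to control the product $\prod_{j}g^{-j}\epsilon g^{j}$ at all.
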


To prove this theorem, we will find a map $\mathrm{Homeo}_{0}(M) \rightarrow \mathbb{R}$ which vanishes exactly on the distortion elements of $\mathrm{Homeo}_{0}(M)$ and which is continuous at those distortion elements. In the case of the 2-dimensional torus, notice that, if Conjecture \ref{conjugacyclasstorus} and Theorem \ref{distcont} are true, the map $f \mapsto \lim_{n \rightarrow +\infty} \frac{\mathrm{diam}(\tilde{f}^{n}(D))}{n}$ satisfies those two conditions. Indeed, the limit $\lim_{n \rightarrow +\infty} \frac{\mathrm{diam}(\tilde{f}^{n}(D))}{n}$, which always exists and is finite, is the diameter of the rotation set of $f$ which is known to be upper semi-continuous (see Corollary 3.7 in \cite{MZ}). In \cite{Mil}, we define a useful quantity which vanishes exactly on the distortion elements of $\mathrm{Homeo}_{0}(M)$ (see Proposition 4.1 in \cite{Mil}). However, we do not know whether this quantity is continuous at the distortion elements of $\mathrm{Homeo}_{0}(M)$. That is why we slightly changed the definition of this quantity to obtain maps $g_{C}$ and $G_{C}$ which vanish exactly on the distortion elements of $\mathrm{Homeo}_{0}(M)$ and which are continuous at those distortion elements. 

We are interested now in the case where the manifold $M$ is a surface. Let $S$ be a compact surface with $\partial S \neq \emptyset$ which is different from the Möbius strip. Using Theorems \ref{conjugacyclassannulus}, \ref{conjugacyclassdisc}, \ref{conjugacyclassboundary}, Proposition \ref{distorsionimpliquecroissance} and Theorem \ref{distcont}, we obtain a complete dynamical description of the distortion elements of the group $\mathrm{Homeo}_{0}(S)$.

\begin{corollary} \label{caractdistann}
An element $f$ in $\mathrm{Homeo}_{0}(S)$ is distorted if and only if it is a non-spreading homeomorphism.
\end{corollary}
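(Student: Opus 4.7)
The plan is to split the corollary into its two implications and dispatch each by combining the results already collected in the paper, using Theorem \ref{distcont} as the engine for the nontrivial direction.

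For the forward implication, if $f \in \mathrm{Homeo}_{0}(S)$ is distorted, Proposition \ref{distorsionimpliquecroissance} applies directly and gives that $f$ is non-spreading. This is immediate, and the case assumption on $S$ plays no role.

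For the converse, assume $f$ is non-spreading. The strategy is to exhibit, in each sub-case for $S$, a distortion element $g \in \mathrm{Homeo}_{0}(S)$ such that $f$ has conjugates arbitrarily close to $g$, and then conclude by Theorem \ref{distcont}. Before splitting cases I would record two preliminary observations coming from Theorem \ref{croissanceimpliquedistorsion}: first, the identity is a distortion element in $\mathrm{Homeo}_{0}(S)$ because $\widetilde{\mathrm{id}}^{\,n}(D)=D$ has constant diameter; second, on the annulus $\mathbb{A}$, every rotation $R_{\alpha}$ is a distortion element, since its canonical lift acts by a translation in the $\mathbb{R}$-direction and therefore $\tilde R_{\alpha}^{\,n}(D)$ has diameter uniformly bounded in $n$, making $\delta(\tilde R_{\alpha}^{\,n}(D))\log\delta(\tilde R_{\alpha}^{\,n}(D))/n$ tend to $0$.

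With these in hand, the case analysis is short. If $S$ is the disc, then $\tilde S=S$ and $D=S$, so $\delta(\tilde f^{\,n}(D))$ is bounded by $\mathrm{diam}(S)$, and Theorem \ref{croissanceimpliquedistorsion} directly shows that every $f\in\mathrm{Homeo}_{0}(S)$ is distorted. If $S=\mathbb{A}$, then by the remark following the definition of non-spreading, $f$ is a pseudo-rotation of some angle $\alpha$; Theorem \ref{conjugacyclassannulus} yields conjugates of $f$ arbitrarily close to $R_{\alpha}$, and the preliminary observation plus Theorem \ref{distcont} gives that $f$ is distorted. Finally, if $S$ is any other compact surface with non-empty boundary distinct from the Möbius strip, then Theorem \ref{conjugacyclassboundary} furnishes conjugates of $f$ arbitrarily close to the identity, and Theorem \ref{distcont} again concludes.

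All substantive work has been done in the earlier theorems, so no new obstacle should arise; the only point demanding care is the bookkeeping on the annulus, where one must verify both that ``non-spreading" and ``pseudo-rotation" coincide (recorded as a remark in the paper) and that rotations are themselves distorted, so that the hypothesis of Theorem \ref{distcont} is genuinely met rather than circularly invoked.
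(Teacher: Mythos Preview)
Your proof is correct and follows essentially the same approach as the paper: the forward implication via Proposition~\ref{distorsionimpliquecroissance}, and the converse by splitting into the disc (handled directly by Theorem~\ref{croissanceimpliquedistorsion}), the annulus (Theorem~\ref{conjugacyclassannulus} plus Theorem~\ref{distcont}, once rotations are known to be distorted via Theorem~\ref{croissanceimpliquedistorsion}), and the remaining surfaces (Theorem~\ref{conjugacyclassboundary} plus Theorem~\ref{distcont}). Your write-up is simply a more explicit unpacking of the paper's terse proof.
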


\begin{proof}
The "only if" implication is a consequence of Proposition \ref{distorsionimpliquecroissance}. The "if" implication involves Theorems \ref{distcont}, \ref{conjugacyclassannulus}, \ref{conjugacyclassboundary} and the fact that a rotation of the annulus is a distortion element in $\mathrm{Homeo}_{0}(\mathbb{A})$. This last fact is a straightforward consequence of Theorem \ref{croissanceimpliquedistorsion}. In the case of the disc, the "if" implication is a consequence of Theorem \ref{croissanceimpliquedistorsion}.
\end{proof}

Note that a rational pseudo-rotation of the annulus has a power which is a pseudo-rotation of angle $0$. Moreover, if an element of a group admits a positive power which is distorted, then this element is distorted. Hence the case $\alpha=0$ in Theorem \ref{conjugacyclassannulus} (together with Theorem \ref{BCLRP} by Béguin, Crovisier, Le Roux and Patou) is sufficient to obtain Corollary \ref{caractdistann} in the case of the annulus.

Using Theorems \ref{distcont} and \ref{Kwapisz}, we obtain the following corollary.

\begin{corollary}
Let $f$ be a pseudo-rotation of $\mathbb{T}^{2}$ which is a $C^1$ diffeomorphism of $\mathbb{T}^2$. Suppose that there exists a representative $(\alpha_{1}, \alpha_{2})$ in $\mathbb{R}^{2}$ of the angle of the homeomorphism $f$ such that the real numbers $1$, $\alpha_{1}$ and $\alpha_{2}$ are $\mathbb{Q}$-linearly independent. Then the element $f$ is distorted in the group $\mathrm{Homeo}_{0}(\mathbb{T}^{2})$.
\end{corollary}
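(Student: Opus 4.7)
The proof amounts to chaining three results already stated in the excerpt, so the plan is essentially to verify that the hypotheses line up.

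First I would apply Theorem \ref{Kwapisz} to the pseudo-rotation $f$. The $\mathbb{Q}$-linear independence of $1,\alpha_{1},\alpha_{2}$ is exactly the hypothesis of that theorem, so it yields a sequence $(h_{n})$ in $\mathrm{Homeo}_{0}(\mathbb{T}^{2})$ such that the conjugates $h_{n} f h_{n}^{-1}$ converge (in the compact-open topology) to the rotation $R:(x,y)\mapsto(x+\alpha_{1},y+\alpha_{2})$.

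Next I would check that $R$ is a distortion element in $\mathrm{Homeo}_{0}(\mathbb{T}^{2})$. Fix any fundamental domain $D$ for the action of $\mathbb{Z}^{2}$ on $\mathbb{R}^{2}$; the lift of $R$ corresponding to the canonical isotopy is the translation $\tilde{R}:\tilde{x}\mapsto\tilde{x}+(\alpha_{1},\alpha_{2})$. Then $\tilde{R}^{n}(D)$ is simply a translate of $D$, so $\delta(\tilde{R}^{n}(D))=\delta(D)$ is constant in $n$. Consequently
\[
\liminf_{n\to+\infty}\frac{\delta(\tilde{R}^{n}(D))\,\log\bigl(\delta(\tilde{R}^{n}(D))\bigr)}{n}=0,
\]
and Theorem \ref{croissanceimpliquedistorsion} yields that $R$ is distorted in $\mathrm{Homeo}_{0}(\mathbb{T}^{2})$.

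Finally, since $f$ admits conjugates arbitrarily close to the distortion element $R$, Theorem \ref{distcont} applied to $M=\mathbb{T}^{2}$ immediately gives that $f$ itself is distorted in $\mathrm{Homeo}_{0}(\mathbb{T}^{2})$, completing the proof.

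There is no real obstacle here: the whole content of the statement is packaged into the three inputs (Kwapisz's theorem, the easy growth criterion for distortion of a rigid translation, and Theorem \ref{distcont}), and the corollary is essentially a one-line combination. If one wanted a more self-contained argument, the only nontrivial point to revisit would be Theorem \ref{distcont}, but that is an independent result of the paper.
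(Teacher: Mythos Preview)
Your proposal is correct and follows exactly the approach indicated in the paper, which simply states that the corollary is obtained ``using Theorems \ref{distcont} and \ref{Kwapisz}'' without giving further details. Your only addition is the explicit verification that the rotation $R$ is distorted via Theorem \ref{croissanceimpliquedistorsion}, which is the natural missing step and is handled the same way the paper handles the analogous fact for rotations of the annulus in the proof of Corollary \ref{caractdistann}.
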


\section{Stability properties of distortion elements}

In this section, we prove Theorem \ref{distcont}. Let $\overline{B}(0,1)$ be the unit closed ball in $\mathbb{R}^{d}$ and 
$$H^{d}=\left\{ (x_{1},x_{2}, \ldots, x_{d}) \in \mathbb{R}^{d}, x_{1} \geq 0 \right\}.$$

\begin{definition} A subset $B$ of $M$ is called a \emph{closed ball} if there exists an embedding $e:\mathbb{R}^{d} \rightarrow M$ such that $e(\overline{B}(0,1))=B$.
We call \emph{closed half-ball} of $M$ the image of the \emph{unit half-ball} $\overline{B}(0,1) \cap H^{d}$ under an embedding $e:H^{d} \rightarrow M$ such that
$$e(\partial H^{d})=e(H^{d}) \cap \partial M.$$
\end{definition}

Let us fix a finite family $\mathcal{U}$ of closed balls or closed half-balls whose interiors cover $M$. We denote by $N(\mathcal{U})$ the cardinality of this cover. We need the following lemma which is proved in Section \ref{defa}.

\begin{lemma} \label{abiendefini}
Let $f$ be a homeomorphism in $\mathrm{Homeo}_{0}(M)$. Then there exists a finite family $(f_{i})_{1 \leq i \leq s}$ of homeomorphisms in $\mathrm{Homeo}_{0}(M)$ such that the following properties are satisfied:
\begin{enumerate}
\item Each homeomorphism $f_{i}$ is supported in the interior of one of the sets in $\mathcal{U}$. 
\item $f=f_{1} \circ f_{2} \circ \ldots \circ f_{s}$.
\item The cardinality of the set $\left\{ f_{i}, 1 \leq i \leq s \right\}$ is less than or equal to $5 N(\mathcal{U})$.
\end{enumerate}
\end{lemma}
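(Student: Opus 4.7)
My plan is to combine a classical fragmentation of $f$ along an isotopy with a combinatorial reorganization that controls the number of distinct factors.

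First, I would pick an isotopy $(f_{t})_{t \in [0,1]}$ from the identity to $f$, available since $f \in \mathrm{Homeo}_{0}(M)$. I would refine $\mathcal{U}$ by choosing for each $U$ a slightly smaller ball or half-ball $V_{U}$ with $\overline{V_{U}} \subset \mathring{U}$ so that $\{\mathring{V_{U}} : U \in \mathcal{U}\}$ still covers $M$, and let $\varepsilon > 0$ be a Lebesgue number of this refined open cover. By uniform continuity of the isotopy, I would subdivide $0 = t_{0} < t_{1} < \cdots < t_{n} = 1$ so finely that each small-step piece $g_{k} = f_{t_{k}} \circ f_{t_{k-1}}^{-1}$ moves every point by less than $\varepsilon$. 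Standard fragmentation (via a partition of unity subordinate to $\{\mathring{V_{U}}\}$, or via the Edwards--Kirby technique for homeomorphisms close to the identity) then writes each $g_{k}$ as a product of finitely many homeomorphisms, each supported in a single $\mathring{U}$. Concatenating across $k$ expresses $f$ as a long product of homeomorphisms each with single-$\mathring{U}$ support, thereby establishing the conditions (1) and (2) of the conclusion at the cost of producing many distinct factors.

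To obtain the bound in (3), I would reorganize this long product. For each $U \in \mathcal{U}$ separately, I would use the commutativity of homeomorphisms with disjoint supports to consolidate factors wherever possible, and then express the resulting consolidated pieces supported in $\mathring{U}$ as short words in a universal palette of at most five homeomorphisms attached to $U$. The existence of such a small per-$U$ palette is what one expects from the distortion-element literature: any compactly supported homeomorphism of a ball or half-ball can be written as a bounded-length product of a small fixed family of generators, typically through a displacement or Anosov--Katok-type conjugation construction that leverages the freedom to push supports apart inside the ball. Summing over $U \in \mathcal{U}$ gives the announced bound of $5 N(\mathcal{U})$ distinct factors.

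The main obstacle is the reorganization step. Fragmentation naturally produces many distinct factors supported in each given $\mathring{U}$, and because factors with the same support do not commute among themselves, one cannot simply collapse them into a single element of $\mathrm{Homeo}_{0}(M)$. The resolution must combine (i) the commutations made available by disjoint-support overlaps in the cover, which allow one to reorder the word to bring manageable blocks together, with (ii) an explicit construction of a small fixed palette inside each $U$ capable of multiplicatively representing any consolidated piece. The precise value $5$ reflects the combinatorics of this construction and, in particular, must accommodate the closed half-balls along $\partial M$, whose interaction with the boundary is slightly more constrained than that of interior closed balls.
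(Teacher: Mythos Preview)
Your two-step outline---fragment $f$ along the cover, then rewrite each factor via a small per-$U$ generating set---matches the paper's proof. However, your reorganization step is a red herring, and your proposed resolution of it does not work. The elements of $\mathcal{U}$ overlap (their interiors cover $M$), so factors supported in different $U$'s have no reason to commute; you cannot ``bring manageable blocks together'' by disjoint-support commutations. Fortunately, no reordering is needed at all. After fragmentation you have $f=f_{1}\cdots f_{k}$ with each $f_{i}$ supported in some $\mathring{U}$; partition $\{1,\ldots,k\}$ into sets $A_{U}$ according to which $U$ supports $f_{i}$. The key lemma (Lemma~\ref{Avila} in the paper) is applied separately to each \emph{finite list} $(f_{i})_{i\in A_{U}}$: it produces a set $\mathcal{G}_{U}$ of at most $5$ homeomorphisms, depending on that list, such that every $f_{i}$ with $i\in A_{U}$ is a word in $\mathcal{G}_{U}$. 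You then substitute each $f_{i}$ \emph{in place} by its word in $\mathcal{G}_{U}$. The resulting (longer) word for $f$ uses only letters from $\bigcup_{U}\mathcal{G}_{U}$, hence at most $5N(\mathcal{U})$ distinct letters, and conditions (1)--(3) follow immediately.

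Two smaller clarifications. First, the palette $\mathcal{G}_{U}$ is \emph{not} universal; it depends on the finite collection of homeomorphisms you feed into the lemma (so your phrasing ``any compactly supported homeomorphism of a ball \ldots can be written as a bounded-length product of a small fixed family'' is misleading if read as a statement about all of $\mathrm{Homeo}_{c}(\mathbb{R}^{d})$). Second, your careful isotopy subdivision is fine but unnecessary here: the paper simply invokes the fragmentation lemma (Lemma~\ref{fragmentationlemma}) as a black box to get the initial decomposition $f=f_{1}\cdots f_{k}$.
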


Let $C$ be an integer which is greater than or equal to $5N(\mathcal{U})$. Let $f$ be any homeomorphism in $\mathrm{Homeo}_{0}(M)$. We denote by $a_{C}(f)$ the minimal integer $s$ such that the following property is satisfied. There exists a finite family $(f_{i})_{1 \leq i \leq s}$ of homeomorphisms in $\mathrm{Homeo}_{0}(M)$ such that:
\begin{enumerate}
\item Each homeomorphism $f_{i}$ is supported in the interior of one of the sets in $\mathcal{U}$. 
\item $f=f_{1} \circ f_{2} \circ \ldots \circ f_{s}$.
\item The cardinality of the set $\left\{ f_{i}, 1 \leq i \leq s \right\}$ is less than or equal to $C$.
\end{enumerate}
Let $g_{C}(f)= \liminf_{ n \rightarrow + \infty} a_{C}(f^{n})/n$ and $G_{C}(f)= \limsup_{n \rightarrow + \infty} a_{C}(f^{n})/n$.

In order to prove Theorem \ref{distcont}, we need the following results which will be proved afterwards. 

This first lemma says that, essentially, the quantities $g_{C}$ and $G_{C}$ are the same and do not really depend on $C$.

\begin{lemma} \label{gandG}
Let $C>C'\geq 5 N(\mathcal{U})$ be integers. The following properties hold.
\begin{enumerate}
\item $G_{C+5N(\mathcal{U})} \leq g_{C} \leq G_{C} < + \infty$.
\item $a_{C} \leq a_{C'}$.
\item $a_{5N(\mathcal{U})} \leq (14 \mathrm{log}(C)+14) a_{C}$.
\end{enumerate}
\end{lemma}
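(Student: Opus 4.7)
The plan is to dispose of parts (2) and (1) first and then tackle (3), which is the substantive step.

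Part (2) is immediate from the definition: any family $(f_i)_{1\le i\le s}$ realizing $a_{C'}(f)$ already satisfies the looser cardinality bound $|\{f_i\}|\le C'<C$, so $a_C(f)\le a_{C'}(f)$. For part (1), the inequality $g_C\le G_C$ is the tautology $\liminf\le\limsup$, and finiteness $G_C(f)<+\infty$ follows by concatenation: any decomposition $f=g_1\circ\cdots\circ g_s$ realizing $a_C(f)$ yields $f^n=(g_1\circ\cdots\circ g_s)^n$, so $a_C(f^n)\le n\cdot a_C(f)$ (the set of distinct factors does not grow), whence $G_C(f)\le a_C(f)<\infty$ by Lemma \ref{abiendefini}.

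The nontrivial inequality $G_{C+5N(\mathcal{U})}(f)\le g_C(f)$ uses a standard subadditivity argument combined with the fragmentation Lemma \ref{abiendefini}. Given $\varepsilon>0$, choose $n_0$ with $a_C(f^{n_0})/n_0\le g_C(f)+\varepsilon$ and an optimal decomposition $f^{n_0}=h_1\circ\cdots\circ h_s$ with $|\{h_j\}|\le C$. For general $n=qn_0+r$ with $0\le r<n_0$, write
\[
f^n=(h_1\circ\cdots\circ h_s)^q\circ f^r,
\]
and decompose the tail $f^r$ via Lemma \ref{abiendefini} using at most $5N(\mathcal{U})$ additional distinct factors. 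The combined decomposition then has at most $C+5N(\mathcal{U})$ distinct factors and total length $qs+\max_{0\le r<n_0}a_{5N(\mathcal{U})}(f^r)$; dividing by $n$ and letting $n\to\infty$ yields $G_{C+5N(\mathcal{U})}(f)\le s/n_0\le g_C(f)+\varepsilon$.

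Part (3) is the main obstacle. Starting from an optimal decomposition $f=f_1\circ\cdots\circ f_{a_C(f)}$ with at most $C$ distinct factors, one must produce a new decomposition whose distinct factors have cardinality at most $5N(\mathcal{U})$ and whose total length is at most $(14\log C+14)\cdot a_C(f)$. The strategy is a binary-encoding-by-conjugation trick performed inside each $U\in\mathcal{U}$ separately. If $k_U\le C$ distinct factors are supported in $U$, one constructs a bounded number of auxiliary homeomorphisms supported in $U$, consisting of a "prototype" together with a few permuting maps built from $\lceil\log_2 k_U\rceil$ pairwise disjoint closed subballs of $U$. Each of the $k_U$ factors is assigned a unique binary address of length $\lceil\log_2 k_U\rceil$ and is then realized as a word of length $O(\log k_U)=O(\log C)$ in these auxiliaries, by conjugating the prototype with the permuting maps according to its address. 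Aggregating over the sets of $\mathcal{U}$ bounds the total number of distinct auxiliaries by a constant multiple of $N(\mathcal{U})$, tuned so that it does not exceed $5N(\mathcal{U})$; the constant $14$ absorbs both the per-factor cost of the binary encoding and the universal constants arising from the bounded-support conjugation construction. The delicate points are to keep all auxiliaries supported in the prescribed balls and to control their total count by $5N(\mathcal{U})$, in direct analogy with the fragmentation underlying Lemma \ref{abiendefini}.
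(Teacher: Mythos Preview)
Your argument is correct and follows the paper's approach: parts (1) and (2) are handled identically (your derivation of $G_C<\infty$ directly from $a_C(f^n)\le n\,a_C(f)$ is in fact slightly cleaner than the paper's), and for part (3) the paper does exactly what you sketch, except that the ``binary-encoding-by-conjugation trick'' you outline is precisely the content of Lemma~\ref{Avila} (Lemma~4.5 of \cite{Mil}), which the paper invokes as a black box on each family $(f_j)_{j\in A_U}$ of factors supported in a given $U\in\mathcal{U}$. You could therefore replace your entire last paragraph by a direct appeal to Lemma~\ref{Avila}.
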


Hence, if $C>C'\geq 5 N(\mathcal{U})$, then $g_{C} \leq g_{C'}$, $G_{C} \leq G_{C'}$, $g_{5N(\mathcal{U})} \leq (14 \mathrm{log}(C)+14)g_{C}$  and $G_{5N(\mathcal{U})} \leq (14 \mathrm{log}(C)+14)G_{C}$.

This lemma is easy to prove once we have Lemma \ref{Avila} below in mind. It is proved in Subsection \ref{propgCGC}.

The two following propositions are the two main steps of the proof of Theorem \ref{distcont}. The first one says that the quantities $g_{C}$ (or equivalently $G_{C}$, $g_{5N(\mathcal{U})}$ or $G_{5N(\mathcal{U})}$) vanish exactly on the distortion elements of $\mathrm{Homeo}_{0}(M)$. The second one is a continuity property of those quantities.

\begin{proposition} \label{caractdist}
Let $f$ be a homeomorphism in $\mathrm{Homeo}_{0}(M)$. The following conditions are equivalent:
\begin{enumerate}
\item The element $f$ is distorted in the group $\mathrm{Homeo}_{0}(M)$.
\item There exists an integer $C \geq 5 N(\mathcal{U})$ such that $G_{C}(f)=0$.
\item There exists an integer $C \geq 5 N(\mathcal{U})$ such that $g_{C}(f)=0$.
\item $G_{5N(\mathcal{U})}(f)=0$.
\item $g_{5N(\mathcal{U})}(f)=0$.
\end{enumerate}
\end{proposition}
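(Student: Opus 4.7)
The equivalences among (2), (3), (4), and (5) follow immediately from Lemma \ref{gandG}: (5) trivially implies (3) by taking $C=5N(\mathcal{U})$; (3) implies (2) via part (1), $G_{C+5N(\mathcal{U})}\leq g_C$; (2) implies (4) by passing to the limsup in part (3), $a_{5N(\mathcal{U})}\leq(14\log(C)+14)\,a_C$; and (4) implies (5) since $g_C\leq G_C$ by definition. Thus it suffices to establish the equivalence of (1) with any one of these, say (4) in one direction and (5) in the other.

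For (1)$\Rightarrow$(4), start from a finite set $\mathcal{G}_0\subset\mathrm{Homeo}_0(M)$ witnessing the distortion of $f$. Applying Lemma \ref{abiendefini} to each $g\in\mathcal{G}_0$, one writes $g$ as a product of at most $s_g$ homeomorphisms drawn from a pool of cardinality at most $5N(\mathcal{U})$, each supported in the interior of an element of $\mathcal{U}$. Substituting these fragmentations into a minimal $\mathcal{G}_0$-word for $f^n$ produces a decomposition of $f^n$ as a product of at most $s\cdot l_{\mathcal{G}_0}(f^n)$ pieces (with $s=\max_g s_g$), drawn from the union of all these fragmentations, a fixed finite pool of cardinality $C_0\leq 5N(\mathcal{U})\cdot|\mathcal{G}_0|$. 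With $C=\max(C_0,5N(\mathcal{U}))$ one obtains $a_C(f^n)\leq s\,l_{\mathcal{G}_0}(f^n)$, hence $G_C(f)=0$; the equivalences above then give $G_{5N(\mathcal{U})}(f)=0$.

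The substantive direction is (5)$\Rightarrow$(1). A short route is to compare $a_{5N(\mathcal{U})}$ with the analogous complexity defined without the cardinality constraint in Proposition 4.1 of \cite{Mil}: the latter is always bounded above by the former, so $g_{5N(\mathcal{U})}(f)=0$ forces the liminf of the unconstrained quantity to vanish, and Proposition 4.1 of \cite{Mil} concludes that $f$ is distorted. For a self-contained proof, one reproduces the Avila--Militon style packing of \cite{Mil}: given a subsequence $(n_k)$ with decompositions $f^{n_k}=h^{(k)}_1\circ\cdots\circ h^{(k)}_{s_k}$, $s_k/n_k\to 0$, each $h^{(k)}_i$ supported in an element of $\mathcal{U}$, one fixes for each $U\in\mathcal{U}$ a small sub-ball $B_U\subset\mathring{U}$ together with auxiliary homeomorphisms $\chi_U$ and $\phi_U$, the latter chosen so that the positive iterates $\phi_U^j(B_U)$ are pairwise disjoint inside $\mathring{U}$. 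One then encodes the countable collection of all pieces supported in $U$ (over all $k$ and $i$) into a single homeomorphism $\Psi_U\in\mathrm{Homeo}_0(M)$, placing the $(k,i)$th piece, suitably conjugated by $\chi_U$, in the disjoint slot $\phi_U^{p(k,i)}(B_U)$ for a pairing function $p:\mathbb{N}^2\to\mathbb{N}$. Each piece is then retrievable by a short word in the finite alphabet $\mathcal{G}^{*}=\bigcup_U\{\Psi_U,\phi_U,\chi_U\}$, so that reassembling $f^{n_k}$ yields $l_{\mathcal{G}^{*}}(f^{n_k})=O(s_k)$ along the subsequence; Fekete's lemma applied to the subadditive sequence $l_{\mathcal{G}^{*}}(f^n)$ promotes this liminf bound to $l_{\mathcal{G}^{*}}(f^n)/n\to 0$, i.e.\ $f$ is distorted.

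The main technical obstacle is bounding the per-piece retrieval cost: a straightforward encoding produces a word of length linear in the index $p(k,i)$, which cannot be absorbed. One handles this by enlarging $\mathcal{G}^{*}$ with a bounded number of auxiliary powers $\phi_U^{2^j}$ and encoding the indices in binary, so that retrieval costs only $O(\log p(k,i))$ per piece. The resulting logarithmic overhead is exactly the one governed by part (3) of Lemma \ref{gandG}; this is the structural reason why the flexible parameter $C$ in conditions (2)--(3) and the fixed threshold $5N(\mathcal{U})$ in conditions (4)--(5) yield equivalent characterisations of distortion.
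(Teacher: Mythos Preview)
Your core argument is correct and mirrors the paper's proof: the equivalences (2)--(5) come from Lemma~\ref{gandG}, the implication (1)$\Rightarrow$(2) from fragmenting each element of the witnessing generating set (the paper cites Lemma~\ref{fragmentationlemma}, you go through Lemma~\ref{abiendefini}, which is the same idea), and the converse is obtained by invoking Proposition~4.1 of \cite{Mil}, exactly as the paper does.

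Your supplementary ``self-contained'' sketch of the packing argument, however, has genuine loose ends and should be either dropped or replaced by a pointer to \cite{Mil}. Two issues: first, ``a bounded number of auxiliary powers $\phi_U^{2^j}$'' cannot reach arbitrarily large indices $p(k,i)$ while keeping $\mathcal{G}^{*}$ finite; the mechanism behind Lemma~\ref{Avila} is not a naive binary expansion of a single translation but a more delicate commutator-type construction. Second, even granting retrieval cost $O(\log p(k,i))$ per piece, the total word length for $f^{n_k}$ is of order $s_k\cdot\log(\text{max index used})$, and the hypothesis $s_k/n_k\to 0$ alone does not force this to be $o(n_k)$; one must also control how fast the indices grow, e.g.\ by choosing the subsequence $(n_k)$ sparse enough, and this is where the real work in Proposition~4.1 of \cite{Mil} lies. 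Since your short route via that proposition is already correct and is exactly what the paper does, the sketch adds no value as written.
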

The equivalence between the the four last assertions follows from Lemma \ref{gandG}. The equivalence with the first item, which is proved at the end of this section, is a consequence of a deeper result in \cite{Mil}.

The following proposition is proved in Subsection 3.2.

\begin{proposition} \label{continuity}
Let $C \geq 5N(\mathcal{U})$ be an integer. The map $g_{C}: \mathrm{Homeo}_{0}(M) \rightarrow \mathbb{R}$ is continuous at the distortion elements of the group $\mathrm{Homeo}_{0}(M)$.
\end{proposition}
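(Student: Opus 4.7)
The plan is to prove that, for any $\epsilon>0$ and any distortion element $f$, there is a neighborhood $\mathcal{V}$ of $f$ on which $g_C<\epsilon$ (recall $g_C(f)=0$ by Proposition \ref{caractdist}). The key reduction is a subadditivity-under-iteration property: if $f'^{n_{0}}=g_{1}\circ\cdots\circ g_{m}$ is a decomposition as in the definition of $a_{C}$, involving a set $\mathcal{F}$ of factors with $|\mathcal{F}|\leq C$, then concatenating $k$ copies yields a decomposition of $f'^{kn_{0}}$ with $km$ factors still drawn from $\mathcal{F}$, so $a_{C}(f'^{kn_{0}})\leq k\,a_{C}(f'^{n_{0}})$. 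It therefore suffices to exhibit a single integer $n_{0}$ and a neighborhood $\mathcal{V}$ with $a_{C}(f'^{n_{0}})\leq\epsilon n_{0}$ for every $f'\in\mathcal{V}$; this gives $a_{C}(f'^{kn_{0}})/(kn_{0})\leq\epsilon$ for all $k\geq 1$, and hence $g_{C}(f')=\liminf_{n}a_{C}(f'^{n})/n\leq\epsilon$.

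The crucial input is a \emph{near-identity fragmentation} statement: there exist a neighborhood $\mathcal{W}$ of the identity in $\mathrm{Homeo}_{0}(M)$ and an integer $s_{0}\leq 5N(\mathcal{U})$ such that every $v\in\mathcal{W}$ admits a decomposition $v=v_{1}\circ\cdots\circ v_{s_{0}}$ with each $v_{i}$ supported in an element of $\mathcal{U}$. (This is the classical one-step fragmentation, which simplifies near the identity; in particular the total number of factors is uniformly bounded, whereas Lemma \ref{abiendefini} for arbitrary homeomorphisms only bounds the number of \emph{distinct} factors.) Assume first that $C\geq 10N(\mathcal{U})$. Then $C-5N(\mathcal{U})\geq 5N(\mathcal{U})$ and, $f$ being distorted, Proposition \ref{caractdist} gives $g_{C-5N(\mathcal{U})}(f)=0$, so I can pick $n_{0}$ arbitrarily large with $a_{C-5N(\mathcal{U})}(f^{n_{0}})<\tfrac{\epsilon}{2}\,n_{0}$; choose in addition $n_{0}>10N(\mathcal{U})/\epsilon$. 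Since $f'\mapsto f'^{n_{0}}$ is continuous in the compact-open topology, there is a neighborhood $\mathcal{V}$ of $f$ on which $v:=f^{-n_{0}}f'^{n_{0}}\in\mathcal{W}$. Concatenating a minimal decomposition of $f^{n_{0}}$ with the near-identity fragmentation of $v$ produces a decomposition of $f'^{n_{0}}=f^{n_{0}}\cdot v$ with at most $a_{C-5N(\mathcal{U})}(f^{n_{0}})+s_{0}<\epsilon n_{0}$ total factors and at most $(C-5N(\mathcal{U}))+5N(\mathcal{U})=C$ distinct factors. Hence $a_{C}(f'^{n_{0}})\leq\epsilon n_{0}$, as required.

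The remaining range $5N(\mathcal{U})\leq C<10N(\mathcal{U})$ (only finitely many integers) follows from item 3 of Lemma \ref{gandG}: taking $\liminf_{n}(\cdot)/n$ in $a_{C}\leq a_{5N(\mathcal{U})}\leq (14\log(10N(\mathcal{U}))+14)\,a_{10N(\mathcal{U})}$ yields $g_{C}(f')\leq c\,g_{10N(\mathcal{U})}(f')$ for a universal constant $c$, and continuity of $g_{10N(\mathcal{U})}$ at $f$, just established, forces $g_{C}$ to be continuous at $f$ as well. The main obstacle I expect is establishing the near-identity fragmentation with a uniform $s_{0}$ (and the corresponding neighborhood $\mathcal{W}$); once that is granted, the argument is essentially bookkeeping combining subadditivity of $a_{C}$ under iteration with the fact that the distinct-factor count of a concatenation is at most the sum of distinct-factor counts of the parts.
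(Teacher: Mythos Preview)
Your proof is correct and follows essentially the same route as the paper: write $f'^{n_0}=f^{n_0}\cdot v$ with $v=f^{-n_0}f'^{n_0}$ near the identity, control the first factor via distortion of $f$ and the second via near-identity fragmentation, then pass to $g_C$ using the subadditivity $a_C(f'^{kn_0})\leq k\,a_C(f'^{n_0})$ (Fact~2). The one technical difference concerns your flagged ``main obstacle'': the paper does \emph{not} assume the sharp bound $s_0\leq 5N(\mathcal{U})$ for near-identity fragmentation. Lemma~\ref{fragmentationlemma} only yields some constant $C(\mathcal{U})$, and the paper then invokes item~3 of Lemma~\ref{gandG} (ultimately Lemma~\ref{Avila}) to compress those $C(\mathcal{U})$ factors of $v$ down to at most $5N(\mathcal{U})$ \emph{distinct} ones, at the cost of multiplying the length by $14\log C(\mathcal{U})+14$. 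This makes the argument independent of the exact constant in the fragmentation lemma, whereas your version relies on the (true, but not stated in the paper) fact that the standard one-step fragmentation near the identity uses at most $N(\mathcal{U})$ pieces. Either variant works; the paper's is slightly more self-contained given the lemmas already proved.
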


Observe that, by Lemma \ref{gandG} and Proposition \ref{caractdist}, this proposition implies that the maps $G_{ C}$ are also continuous at the distortion elements of the group $\mathrm{Homeo}_{0}(M)$. Before proving the above propositons, let us prove Theorem \ref{distcont}.

\begin{proof}[Proof of Theorem \ref{distcont}]
By Proposition \ref{caractdist}, it suffices to prove that $g_{10 N(\mathcal{U})}(f)=0$. Denote by $g$ a distortion element in $\mathrm{Homeo}_{0}(M)$ which belongs to the closure of the set of conjugates of $f$. Observe first that, for any homeomorphism $h$ in $\mathrm{Homeo}_{0}(M)$ and any integer $n$, 
$$ a_{10N(\mathcal{U})}(f^{n}) \leq a_{5N(\mathcal{U})}(hf^{n}h^{-1})+2a_{5N(\mathcal{U})}(h).$$
Hence
$$g_{10N(\mathcal{U})}(f) \leq g_{5N(\mathcal{U})}(hfh^{-1}).$$
 Recall that, by Proposition \ref{caractdist}, $g_{10N(\mathcal{U})}(g)=0$. By Proposition \ref{continuity}, the right-hand side of the last inequality can be chosen to be arbitrarily small. Therefore $g_{10 N(\mathcal{U})}(f)=0$. Proposition \ref{caractdist} implies that $f$ is distorted in the group $\mathrm{Homeo}_{0}(M)$.
\end{proof}

The above Lemmas will be essentially consequences of the following Lemma which is proved in \cite{Mil} (see Lemma 4.5 and its proof).

\begin{lemma} \label{Avila}
Let $(f_{n})_{n \in \mathbb{N}}$ be a sequence of homeomorphisms of $\mathbb{R}^{d}$ (respectively of $H^{d}$) supported in the unit ball (respectively the unit half-ball). Then there exists a finite set $\mathcal{G} \subset \mathrm{Homeo}_{c}(\mathbb{R}^{d})$ (respectively $\mathcal{G} \subset \mathrm{Homeo}_{c}(H^{d})$) such that:
\begin{enumerate}
\item $\sharp \mathcal{G} \leq 5$.
\item For any integer $n$, the element $f_{n}$ belongs to the group generated by $\mathcal{G}$.
\item For any integer $n$, $l_{\mathcal{G}}(f_{n}) \leq 14 \log(n)+14$.
\end{enumerate}
\end{lemma}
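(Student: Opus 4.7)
The strategy is to compress the countable sequence $(f_n)$ into finitely many homeomorphisms by exploiting a binary tree of nested balls, so that each $f_n$ can be recovered by a word of length $O(\log n)$ in a fixed generating set.

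First, I would construct two homeomorphisms $u_0, u_1 \in \mathrm{Homeo}_c(\mathbb{R}^d)$ (respectively in $\mathrm{Homeo}_c(H^d)$), supported in some auxiliary ball (respectively half-ball) $B^*$ containing $\overline{B}(0,1)$, which act as contractions on $\overline{B}(0,1)$ with disjoint images inside $\overline{B}(0,1)^{\circ}$. Iterating along a finite binary word $w = b_1 \cdots b_k$ yields a ball $u_w(\overline{B}(0,1)) := u_{b_1} \cdots u_{b_k}(\overline{B}(0,1))$, and if we fix a prefix-free bijection $n \mapsto w_n$ between $\mathbb{N}$ and a subset of $\{0,1\}^{*}$ with $|w_n| \leq C\log(n)+C$ (for example via an Elias-type code), then the balls $u_{w_n}(\overline{B}(0,1))$ are pairwise disjoint and their diameters shrink to $0$. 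I would then define $F$ to be the identity outside the union of these balls and to act on $u_{w_n}(\overline{B}(0,1))$ as the conjugate $u_{w_n}\, f_n\, u_{w_n}^{-1}$; disjointness together with the shrinking of supports guarantees that $F$ is a well-defined element of $\mathrm{Homeo}_c(\mathbb{R}^d)$ (resp.\ $\mathrm{Homeo}_c(H^d)$).

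Now $u_{w_n}^{-1} F u_{w_n}$ coincides with $f_n$ on $\overline{B}(0,1)$ by direct unwinding, but may act non-trivially on the other balls $u_{w_m}(\overline{B}(0,1))$ for $m \neq n$. To produce exactly $f_n$, I would introduce one or two auxiliary generators---such as a ``truncation'' homeomorphism $\sigma$ supported away from $\overline{B}(0,1)$, or a second encoder $F'$ chosen so that $F$ and $F'$ agree outside a fixed neighbourhood of $\overline{B}(0,1)$ and differ by the desired term after conjugation. Together with $u_0, u_1, F$ this gives a finite generating set $\mathcal{G}$ with $\sharp \mathcal{G} \leq 5$. The resulting expression for $f_n$ has the form (short fixed word)$\cdot u_{w_n}^{-1} F u_{w_n} \cdot$(short fixed word), whose total length, after optimizing the constants in the prefix-free code and in the extraction procedure, is bounded by $14 \log(n)+14$.

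The main obstacle is the extraction step: one must design $F$ and the auxiliary generators so that a short word in $\mathcal{G}$ really isolates $f_n$ as a homeomorphism of $\mathbb{R}^d$ supported in $\overline{B}(0,1)$, rather than $f_n$ plus a residual action on the other balls of the tree. In the $H^d$ case, the same construction goes through verbatim, with all balls, the homeomorphisms $u_0, u_1$, and the encoder $F$ taken inside a half-ball so that the boundary of $H^d$ is preserved.
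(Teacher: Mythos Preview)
The paper does not actually prove this lemma: it is quoted verbatim from the author's earlier article \cite{Mil} (Lemma~4.5 there), and is used here only as a black box. So there is no in-paper proof to compare against.

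That said, your outline is exactly the Avila-style compression argument that underlies the cited result, and the overall architecture---two contractions $u_0,u_1$ generating a binary tree of disjoint balls, a prefix-free binary encoding of $n$ with length $O(\log n)$, and an ``encoder'' $F$ that carries all the $f_n$'s simultaneously on disjoint shrinking supports---is correct. The one place where your sketch is genuinely incomplete is the extraction step, and you have correctly identified it: $u_{w_n}^{-1} F u_{w_n}$ restricted to $\overline{B}(0,1)$ is $f_n$, but globally it still acts nontrivially on infinitely many smaller balls. The standard fix is a commutator trick rather than an ad hoc ``truncation'' map: one introduces a single auxiliary homeomorphism $t$ supported in $B^*$ that pushes $\overline{B}(0,1)$ off itself (into a region disjoint from all the $u_{w_m}(\overline{B}(0,1))$), and then $[\,u_{w_n}^{-1} F u_{w_n},\,t\,]$ is supported in $\overline{B}(0,1)\cup t(\overline{B}(0,1))$ and, after one more conjugation/commutator, yields exactly $f_n$. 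This costs a bounded number of extra letters and is what produces the specific constants $14\log(n)+14$ and $\sharp\mathcal{G}\le 5$. Your vaguer proposal of ``a second encoder $F'$'' would also work but is less economical. If you want to turn your sketch into a proof, the commutator isolation is the missing ingredient; everything else is in order.
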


\subsection{The quantity $a_{5 N(\mathcal{U})}$ is well defined}
\label{defa}

In this section, we prove Lemma \ref{abiendefini}.
\begin{proof}
Take a homeomorphism $f$ in $\mathrm{Homeo}_{0}(M)$. We now apply the following classical result, called the fragmentation lemma.

\begin{lemma} \label{fragmentationlemma}
Let $f$ be a homeomorphism in $\mathrm{Homeo}_{0}(M)$. Then there exist an integer $k \geq 0$ and homeomorphisms $f_{1}, f_{2}, \ldots, f_{k}$ in $\mathrm{Homeo}_{0}(M)$ such that:
\begin{enumerate}
\item Each homeomorphism $f_{i}$ is supported in the interior of one of the sets of $\mathcal{U}$.
\item $f=f_{1} \circ f_{2} \circ \ldots \circ f_{k}$.
\end{enumerate}
Moreover, there exist a constant $C(\mathcal{U})>0$ and a neighbourhood of the identity such that any homeomorphism $f$ in this neighbourhood admits a decomposition as above with $k \leq C(\mathcal{U})$.
\end{lemma}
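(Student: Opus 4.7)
The plan is to first establish a uniform fragmentation on a neighbourhood of the identity, and then to bootstrap to arbitrary $f$ by chopping an isotopy into small steps.

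First I would refine the cover $\mathcal{U} = \{U_1, \ldots, U_m\}$ with $m = N(\mathcal{U})$ by choosing compact sets $K_i \subset \mathrm{int}(U_i)$ whose union is still $M$; the point of this refinement is to leave ``room'' in each $U_i$ for the construction below. The heart of the argument is the near-identity case: I would show that there exists a neighbourhood $\mathcal{V}$ of the identity in $\mathrm{Homeo}_0(M)$ such that every $h \in \mathcal{V}$ admits a decomposition $h = h_1 \circ \cdots \circ h_m$ with $h_i$ supported in $\mathrm{int}(U_i)$. The construction is inductive: at step $i$, assuming one has already replaced $h$ by some $h^{(i-1)}$ that equals the identity on $K_1 \cup \cdots \cup K_{i-1}$ and is still close to the identity, one builds a homeomorphism $h_i$ supported in $\mathrm{int}(U_i)$ coinciding with $h^{(i-1)}$ on $K_i$, and sets $h^{(i)} = h_i^{-1} \circ h^{(i-1)}$. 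Producing such an $h_i$ is the delicate point: when $h^{(i-1)}$ is sufficiently close to the identity, the pair $(\mathrm{id}, h^{(i-1)}|_{K_i})$ is a short isotopy of the inclusion $K_i \hookrightarrow M$ whose track remains inside the chart $\mathrm{int}(U_i)$, and by the Edwards--Kirby topological isotopy extension theorem this extends to an ambient isotopy compactly supported in $\mathrm{int}(U_i)$, whose time-$1$ map is the required $h_i$. Charts that are half-balls are handled identically using the boundary version of isotopy extension.

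For an arbitrary $f \in \mathrm{Homeo}_0(M)$, I would then fix an isotopy $(F_t)_{t \in [0,1]}$ from the identity to $f$. Using uniform continuity of $(t,x) \mapsto F_t(x)$ on the compact set $[0,1] \times M$, I would subdivide $[0,1]$ into intervals $0 = t_0 < \cdots < t_N = 1$ fine enough that each $g_j := F_{t_j} \circ F_{t_{j-1}}^{-1}$ belongs to $\mathcal{V}$. Then $f = g_N \circ \cdots \circ g_1$, and applying the near-identity case to each $g_j$ yields a fragmentation of $f$ with at most $k \leq Nm$ factors, each supported in some element of $\mathcal{U}$. The ``moreover'' clause is immediate from the near-identity case with $C(\mathcal{U}) = m$ on the neighbourhood $\mathcal{V}$.

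The main obstacle is the inductive step in the near-identity construction: one must extract from a homeomorphism close to the identity a companion homeomorphism supported in a prescribed chart that matches it on a prescribed compact set. In the topological category, unlike the smooth one where a time-dependent vector field can simply be truncated by a bump function, this requires the non-trivial Edwards--Kirby topological isotopy extension theorem. One must also verify that the residual homeomorphism $h^{(i)}$ at each stage remains in a neighbourhood small enough for the next step to apply, which is handled by choosing $\mathcal{V}$ small enough from the outset in terms of uniform modulus-of-continuity estimates depending only on the refinement $(K_i, U_i)_{1 \leq i \leq m}$.
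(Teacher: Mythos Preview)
Your proposal is correct and follows exactly the approach the paper sketches: prove the lemma first for homeomorphisms close to the identity (you flesh this out via the standard Edwards--Kirby inductive construction), then extend to all of $\mathrm{Homeo}_0(M)$ by a connectedness/isotopy-chopping argument. The paper itself gives no more than this one-sentence outline and defers to \cite{Bou} and \cite{Fis} for details, so your write-up is in fact more complete than the paper's treatment.
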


A proof of this Lemma can be found in \cite{Bou} or \cite{Fis}, for instance.  The idea is to prove the lemma for homeomorphisms sufficiently close to the identity and then use a connectedness argument to extend this result to any homeomorphism in $\mathrm{Homeo}_{0}(M)$.

The fragmentation lemma applied to our homeomorphism $f$ yields a decomposition $f=f_{1} \circ f_{2} \circ \ldots \circ f_{k}$. Consider now a partition $\left\{A_{U}, U \in \mathcal{U} \right\}$ of the set $\left\{1, 2, \ldots, k \right\}$ such that, for any set $U$ in $\mathcal{U}$ and any index $i$ in $A_{U}$, the homeomorphism $f_{i}$ is supported in the interior of $U$. Now, for each element $U$ of our cover $\mathcal{U}$, we apply Lemma \ref{Avila} to the finite sequence $(f_{i})_{i \in A_{U}}$: this provides a decomposition of each of the $f_{i}$'s. The concatenation of those decompositions gives a decomposition of our homeomorphism $f$ which satisfies the conclusion of Lemma \ref{abiendefini}.
\end{proof}

\subsection{Properties of the maps $g_{C}$ and $G_{C}$} \label{propgCGC}

In this subsection, we prove Lemma \ref{gandG}, Proposition \ref{caractdist} and Proposition \ref{continuity}. These results rely on the following facts.

Let $C, C' \geq 5 N(\mathcal{U})$, $p>0$ and $f$ and $g$ be elements of $\mathrm{Homeo}_{0}(M)$.\\
\underline{Fact 1}: $a_{C+C'}(fg) \leq a_{C}(f)+a_{C'}(g)$.\\
\underline{Fact 2}: $a_{C}(f^{p}) \leq p a_{C}(f)$.

\begin{proof}[Proof of Lemma \ref{gandG}]
The inequalities $g_{C} \leq G_{C}$ and $a_{C} \leq a_{C'}$ are obvious. Fix an integer $k>0$. Take any integer $n>0$ and perform the Euclidean division: $n=qk+r$. By Facts 1 and 2,
$$\begin{array}{rcl}
a_{C+5N(\mathcal{U})}(f^{n}) & \leq & a_{C}(f^{qk})+ a_{5N(\mathcal{U})}(f^{r})\\
 & \leq & q a_{C}(f^{k})+ a_{5N(\mathcal{U})}(f^{r}).
\end{array}
$$
Hence, dividing by $n$, and taking the upper limit as $n \rightarrow + \infty$, we obtain that $G_{C+ 5 N(\mathcal{U})}(f) \leq a_{C}(f^{k})/k$. This relation implies the inequality $G_{C+5N(\mathcal{U})} \leq g_{C}$. It implies moreover that, for any $C \geq 5 N(\mathcal{U})$, $G_{C+5N(\mathcal{U})} < + \infty$. From the inequality $a_{5N(\mathcal{U})} \leq (14 \mathrm{log}(C)+14)a_{C}$, which we prove below, we deduce that $G_{5N(\mathcal{U})} < + \infty$. Hence, for any $C \geq 5N( \mathcal{U})$, $G_{C} \leq G_{5N(\mathcal{U})} < + \infty$ and $g_{C} \leq g_{5N(\mathcal{U})} \leq G_{5 N(\mathcal{U})} < + \infty$.

It remains to prove that $a_{5 N(\mathcal{U})} \leq (14 \mathrm{log}(C)+14)a_{C}$. Let $f$ be an element of $\mathrm{Homeo}_{0}(M)$ and let $l=a_{C}(f)$. By definition, there exists a map $\sigma: \left\{1, \ldots, l\right\} \rightarrow \left\{ 1, \ldots, C \right\}$ and elements $f_{1}, \ldots, f_{C}$ of $\mathrm{Homeo}_{0}(M)$ supported in the interior of one of the sets of $\mathcal{U}$ such that:
$$ f= f_{\sigma(1)}  f_{\sigma(2)} \ldots f_{\sigma(l)}.$$
Denote by $\left\{ A_{U}, U \in \mathcal{U} \right\}$ a partition of the set $\left\{ 1, \ldots, C \right\}$ such that, for any set $U$ in $\mathcal{U}$:
$$ \forall j \in A_{U}, \mathrm{supp}(f_{j}) \subset \mathring{U}.$$
Fix such an open set $U$. Lemma \ref{Avila} applied to the (finite) sequence $(f_{j})_{j \in A_{U}}$ provides a finite set $\mathcal{G}_{U}$ of homeomorphisms which are each supported in $\mathring{U}$ such that the following properties are satisfied.
\begin{enumerate}
\item For any index $j$ in $A_{U}$, the element $f_{j}$ belongs to the group generated by $\mathcal{G}_{U}$.
\item The set $\mathcal{G}_{U}$ contains at most $5$ elements.
\item $l_{\mathcal{G}_{U}}(f_{j}) \leq 14 \log(\sharp A_{U})+14.$
\end{enumerate}
Hence, if we take $\mathcal{G}= \bigcup \limits _{U \in \mathcal{U}} \mathcal{G}_{U}$, we have:
\begin{enumerate}
\item $\sharp \mathcal{G} \leq 5 N(\mathcal{U})$.
\item The element $f$ belongs to the group generated by $\mathcal{G}$.
\item $l_{\mathcal{G}}(f) \leq (14 \log(C)+14)l$.
\end{enumerate}
Hence $a_{5N(\mathcal{U})}(f) \leq (14 \log(C)+14) a_{ C}(f)$.
\end{proof}

\begin{proof}[Proof of Proposition \ref{caractdist}]
The equivalence between the last four conditions is a direct consequence of Lemma \ref{gandG}. By Lemma \ref{fragmentationlemma}, if the element $f$ is distorted, then there exists $C$ such that $G_{C}(f)=0$: it suffices to apply this lemma to each factor provided by the definition of a distortion element. Conversely, if there exists $C$ such that $G_{C}(f)=0$, then $f$ is distorted by Proposition 4.1 in \cite{Mil}.  
\end{proof}

\begin{proof}[Proof of Proposition \ref{continuity}] 
By Lemma \ref{gandG} and Proposition \ref{caractdist}, it suffices to prove the lemma for $C= 15N(\mathcal{U})$. Fix $\epsilon>0$. Let $f$ be an element which is distorted in $\mathrm{Homeo}_{0}(M)$. By Proposition \ref{caractdist}, we can find an integer $p>0$ such that 
$$\frac{a_{5N(\mathcal{U})}(f^{p})}{p} + \frac{(14 \mathrm{log}(C(\mathcal{U}))+14)C(\mathcal{U})}{p} < \epsilon,$$ where $C(\mathcal{U})$ is given by Lemma \ref{fragmentationlemma}. Take a homeomorphism $g$ in $\mathrm{Homeo}_{0}(M)$ sufficiently close to $f$ so that $h=f^{-p}g^{p}$ belongs to the neighbourhood given by Lemma \ref{fragmentationlemma}.  For any positive integer $n$, we write $n=pq_{n}+r_{n}$, where $q_{n}$ and $r_{n}$ are respectively the quotient and the remainder of the Euclidean division of $n$ by $p$. By Facts 1 and 2, 
$$a_{15N(\mathcal{U})}(g^{n}) \leq q_{n}a_{10N(\mathcal{U})}(g^{p})+ a_{5N(\mathcal{U})}(g^{r_{n}}).$$
Dividing by $n$ and taking the lower limit when $n$ tends to $+ \infty$, 
$$ g_{15N(\mathcal{U})}(g) \leq a_{10N(\mathcal{U})}(g^{p})/p.$$
By Fact 1,
$$a_{10N(\mathcal{U})}(g^{p}) \leq a_{5N(\mathcal{U})}(f^{p})+ a_{5N(\mathcal{U})}(h).$$
By Lemma \ref{fragmentationlemma}, $a_{\max(C(\mathcal{U}), 5 N( \mathcal{U}))}(h) \leq C(\mathcal{U})$ and, by Lemma \ref{gandG}, $a_{5N(\mathcal{U})}(h) \leq (14 \mathrm{log}(C(\mathcal{U}))+14)a_{\max(C(\mathcal{U}),5N(\mathcal{U}))}(h)$ . Hence
$$ g_{15N(\mathcal{U})}(g) \leq \frac{a_{5N(\mathcal{U})}(f^{p})}{p} + \frac{(14 \mathrm{log}(C(\mathcal{U}))+14)C(\mathcal{U})}{p} < \epsilon.$$
\end{proof}

\section{Conjugacy classes: case of the circle} \label{sectcircle}

In this section, we prove Proposition \ref{conjugacyclasscircle}. Denote by $\mathrm{Homeo}_{0}(\mathbb{S}^{1})$ the group of orientation-preserving homeomorphisms of the circle $\mathbb{S}^{1}= \mathbb{R} / \mathbb{Z}$. In this section, for any $\alpha \in \mathbb{R}/ \mathbb{Z}$, we denote by $R_{\alpha}$ the rotation of the circle $x \mapsto x+ \alpha$. If $f$ denotes an orientation-preserving homeomorphism of the circle, we denote by $\rho(f)$ its rotation number.

Proposition \ref{conjugacyclasscircle} is not difficult to prove and one might find more straightforward proofs of it using semi-conjugacy results. However, we will use the proof given here for the case of homeomorphisms of the disc. Hence this section can be considered as a preparatory section for the case of the disc.

We will make a distinction between the case of a homeomorphism with a irrational rotation number and the case of a homeomorphism with an rational rotation number. Let us start with the irrational case.

\begin{proposition} \label{finiteconj}
Fix a homeomorphism $f$ in $\mathrm{Homeo}_{0}(\mathbb{S}^{1})$, a point $x$ in $\mathbb{S}^{1}$ and an integer $N \geq 0$. Suppose that the rotation number of $f$ is irrational. Then there exists a homeomorphism $h$ in $\mathrm{Homeo}_{0}(\mathbb{S}^{1})$ such that, for any $0 \leq k \leq N$,
$$h(f^{k}(x))=R_{\alpha}^{k}(x).$$ 
\end{proposition}

Of course this proposition does not hold in the case that the rotation number of $f$ is rational as $f$ can have infinite orbits whereas any orbit under a rational rotation is finite.

\begin{proof}
Denote by $x$, $R_{\alpha}^{k_{1}}(x)$,..., $R_{\alpha}^{k_{N}}(x)$ the points of $\left\{ R_{\alpha}^{k}(x), 0 \leq k \leq N \right\}$ which are successively met when we follow the circle in the sense given by the orientation of the circle, starting from the point $x$. Then, by Proposition 11.2.4 P.395 in \cite{KH}, the points which we meet successively among the points $f^{k}(x)$, for $0 \leq k \leq N$, when we follow the oriented circle starting from $x$, are $x$, $f^{k_{1}}(x)$,..., $f^{k_{N}}(x)$. Hence there exists a homeomorphism $h$ in $\mathrm{Homeo}_{0}(\mathbb{S}^{1})$ which, for any $0 \leq i \leq N$, sends the interval $[f^{k_{i}}(x),f^{k_{i+1}}(x)]$ onto the interval $[R_{\alpha}^{k_{i}}(x),R_{\alpha}^{k_{i+1}}(x)]$., where $k_{0}=0$ and $k_{N+1}=0$. The homeomorphism $h$ satisfies the required property.
\end{proof}

\begin{corollary}
Let $f$ be an orientation-preserving homeomorphism of the circle with $\rho(f)=\alpha$ irrational. Then the homeomorphism $f$ has conjugates in $\mathrm{Homeo}_{0}(\mathbb{S}^{1})$ arbitrarily close to the rotation $R_{\alpha}$.
\end{corollary}

\begin{proof}
Let $\epsilon >0$. Fix a point $x_{0}$ of the circle. As the orbits under the rotation $R_{\alpha}$ are dense in the circle, one can find $N>0$ such that the length of any connected component of the complement of $\left\{ R_{\alpha}^{k}(x_{0}), 0 \leq k \leq N-1 \right\}$ is smaller than $\epsilon$. Proposition \ref{finiteconj} yields a homeomorphism $h$ such that, for any $0 \leq k \leq N$, $h(f^{k}(x_{0}))=R_{\alpha}^{k}(x_{0})$. As the point $x_{0}$ is fixed under $h^{-1}$, we also have, for any $0 \leq k \leq N$, $hf^{k}h^{-1}(x_{0})=R_{\alpha}^{k}(x_{0})$.

Denote by $(R_{\alpha}^{k_{0}}(x_{0}), R_{\alpha}^{k_{1}}(x_{0}))$ any connected component of the complement of $\left\{ R_{\alpha}^{k}(x_{0}), 0 \leq k \leq N-1 \right\}$ in the circle. For any point $x$ in $[R_{\alpha}^{k_{0}}(x_{0}), R_{\alpha}^{k_{1}}(x_{0})]$, the point $hfh^{-1}(x)$ belongs to the interval
$$[hfh^{-1}R_{\alpha}^{k_{0}}(x_{0}),hfh^{-1}R_{\alpha}^{k_{1}}(x_{0})]=[R_{\alpha}^{k_{0}+1}(x_{0}), R_{\alpha}^{k_{1}+1}(x_{0})].$$
As the point $R_{\alpha}(x)$ also belongs to this interval and as the length of this interval is smaller than $\epsilon$,
$$d(hfh^{-1}(x),R_{\alpha}(x)) <\epsilon.$$
\end{proof}

Now, we deal with the case where the rotation number is rational.

\begin{proposition} \label{finiteconjrat}
Let $f$ be a homeomorphism in $\mathrm{Homeo}_{0}(\mathbb{S}^{1})$. Suppose that $\rho(f)= \frac{p}{q}$, where $p$ and $q$ are relatively prime integers. Fix a (large) integer $N>0$. There exists a cover $(I_{j})_{j \in \mathbb{Z}/Nq\mathbb{Z}}=([a_{j},b_{j}])_{j \in \mathbb{Z}/Nq\mathbb{Z}}$ of the circle by intervals whose interiors are pairwise disjoint with the following properties.
\begin{enumerate}
\item for any $j$, $a_{j+1}=b_{j}$.
\item for any $j$, $f(I_{j}) \subset I_{j+Np-1}\cup I_{j+Np} \cup I_{j+Np+1}$.
\end{enumerate}
\end{proposition}

\begin{proof}
By a classical result by Poincaré (see Proposition 11.1.4 in \cite{KH}), there exists a point $x_{0}$ of the circle which is periodic for $f$ with period $q$, \emph{i.e.} $f^{q}(x_{0})=x_{0}$ and $f^{k}(x_{0}) \neq x_{0}$ whenever $0<k<q$. Denote by $x_{0}=f^{k_{0}}(x_{0})$, $f^{k_{1}}(x_{0})$,..., $f^{k_{q-1}}(x_{0})$ the points of $\left\{ f^{k}(q), k \in \mathbb{Z}/q \mathbb{Z} \right\}$ in the order given by the orientation of the circle. Construct by induction $N-1$ points $x_{1},x_{2}, \ldots, x_{N-1}$ in the open interval $(x_{0},f^{k_{1}}(x_{0}))$ such that
$$x_{0} <x_{1} <x_{2} < \ldots <x_{N-1}<f^{k_{1}}(x_{0})$$
and, for any $0<i<N-1$, $x_{i-1}<f^{q}(x_{i})<x_{i+1}$.

It suffices to take the connected components of the complement of 
$$\left\{f^{k}(x_{j}), \left\{ 
\begin{array}{l}
0 \leq k \leq q-1 \\ 
0 \leq j \leq N-1
\end{array}
\right. \right\}$$
as intervals $I_{j}$. More precisely, for any $0 \leq j \leq N-2$ and any $0 \leq i \leq q-1$, take
$$I_{j+Ni}=f^{k_{i}}([x_{j},x_{j+1}]),$$ 
and, for any $0 \leq i \leq q-1$,
$$I_{N-1+Ni}=[f^{k_{i}}(x_{N-1}),f^{k_{i+1}}(x_{0})].$$
As the points $f^{k}(x_{0})$ are in the same order on the circle as the points $R_{\frac{p}{q}}^{k}(x_{0})$, we obtain that, for any index $i \in \mathbb{Z}/q \mathbb{Z}$, $f(f^{k_{i}}(x_{0}))=f^{k_{i+p}}(x_{0})$. Hence, if $k_{i} \neq q-1$, $f(I_{j})=I_{j+Np}$ and, if $k_{j}=q-1$, $f(I_{j}) \subset I_{j+Np-1} \cup I_{j+Np} \cup I_{j+Np+1}$ as, for any $i$, $x_{i-1}<f^{q}(x_{i})<x_{i+1}$.
\end{proof}

\begin{corollary}
Let $f$ be a homeomorphism in $\mathrm{Homeo}_{0}(\mathbb{S}^{1})$. Suppose that $\rho(f)$ is rational. Then the homeomorphism $f$ has conjugates arbitrarily close to the rotation $R_{\frac{p}{q}}$.
\end{corollary}

\begin{proof}
Let $\epsilon >0$ and take an integer $N$ sufficiently large such that $\frac{1}{Nq} < \frac{\epsilon}{3}$. Set $I'_{j}=[\frac{j}{Nq}, \frac{j+1}{Nq}] \subset \mathbb{S}^{1}$. Proposition \ref{finiteconjrat} holds for the rotation $R_{\frac{p}{q}}$ with those intervals. Proposition \ref{finiteconjrat} applied to the homeomorphism $f$ provides intervals $I_{j}$ with the properties given by the Proposition. Take any homeomorphism $h$ in $\mathrm{Homeo}_{0}(\mathbb{S}^{1})$ such that, for any $j$, $h(I_{j})=I'_{j}$. Then, for any $j$, $hfh^{-1}(I'_{j}) \subset I'_{j+Np-1} \cup I'_{j+Np} \cup I'_{j+Np+1}$ and hence $d(hfh^{-1}, R_{\frac{p}{q}}) < \epsilon$.
\end{proof}

\section{Conjugacy classes: case of the disc} \label{sectdisc}

In this section, we see $\mathbb{D}^{2}$ as the unit disc in the Euclidean plane. We denote by $\mathrm{Homeo}_{0}(\mathbb{D}^{2},\partial \mathbb{D}^{2})$ the identity component of the group of homeomorphisms of the disc which pointwise fix a neighbourhood of the boundary. As a warm-up, we start with the following easy proposition. 

\begin{proposition}
Any element of $\mathrm{Homeo}_{0}(\mathbb{D}^{2}, \partial \mathbb{D}^{2})$ has conjugates arbitrarily close to the identity.
\end{proposition}

\begin{proof}
Take $\epsilon >0$ and an element $f$ of $\mathrm{Homeo}_{0}(\mathbb{D}^{2}, \partial \mathbb{D}^{2})$. Let $B$ be a closed disc which is contained in the interior of the disc $\mathbb{D}^{2}$ and whose interior contains the support of $f$. Finally, let $h$ be a homeomorphism in $\mathrm{Homeo}_{0}(\mathbb{D}^{2}, \partial \mathbb{D}^{2})$ which sends the disc $B$ to a disc $B'$ whose diameter is smaller than $\epsilon$. As the homeomorphism $hfh^{-1}$ is supported in $B'$, this homeomorphism is $\epsilon$-close to the identity.
\end{proof}

The goal of this section is to prove Theorem \ref{conjugacyclassdisc}

In the proof of the theorem, we need the following easy lemma.

\begin{lemma} \label{extension}
Let $\varphi$ be an orientation preserving homeomorphism of the circle $\partial \mathbb{D}^{2}$. There exists a homeomorphism $h$ in $\mathrm{Homeo}_{0}(\mathbb{D}^{2})$ such that $h_{|\partial \mathbb{D}^{2}}= \varphi$. 
\end{lemma}

\begin{proof}
Denote by $\left\|. \right\|$ the Euclidean norm on $\mathbb{R}^2$. We see $\mathbb{D}^2$ as the unit disc in $\mathbb{R}^2$. Take the homeomorphism defined by 
$$\begin{array}{rrcl}
h: & \mathbb{D}^{2} & \rightarrow & \mathbb{D}^{2} \\
 & x \neq 0 & \mapsto & \left\|x \right\| \varphi(\frac{x}{\left\| x \right\|}) \\
  & 0 & \mapsto & 0
 \end{array}
.$$
An isotopy between $\varphi$ and the identity provides an isotopy between $h$ and the identity.
\end{proof}

\begin{proof}[Proof of Theorem \ref{conjugacyclassdisc}]
We will distinguish two cases depending on whether the number rotation number $\rho(f)$ of $f$ on the boundary $\partial \mathbb{D}^2$ is rational or not.

\noindent \underline{First case:} Suppose that $\alpha=\rho(f)$ is irrational.

Fix $\epsilon>0$. Denote by $\gamma$ the oriented arc
$$ \begin{array}{rcl}
[0,1] & \rightarrow & \mathbb{D}^{2} \subset \mathbb{R}^2 \\
t & \mapsto & (t,0)
\end{array}
.$$
Let $N$ and $M$ be integers. Denote by $R_{\alpha}^{q_{1}}(\gamma), R_{\alpha}^{q_{2}}(\gamma),\ldots, R_{\alpha}^{q_{N}}(\gamma)$ the curves $R_{\alpha}(\gamma), R_{\alpha}^{2}(\gamma),\ldots, R_{\alpha}^{N}(\gamma)$
ordered in such a way that $q_{1}=1$ and for any index $i \in \mathbb{Z}/n \mathbb{Z}$ $R_{\alpha}^{q_{i+1}}(\gamma)$ is the curve in the set 
$$\left\{ R_{\alpha}^{k}(\gamma), \left\{ \begin{array}{l} 1 \leq k \leq N \\ k \neq q_{i} \end{array} \right. \right\}$$
which is immediately on the right of $R_{\alpha}^{q_{i}}(\gamma)$.

For any $i \in \mathbb{Z} / N \mathbb{Z}$ and any $0 \leq j \leq M-1$, denote by $S_{i,j}$ the square bounded on the left by $R_{\alpha}^{q_{i}}(\gamma)$, on the right by $R_{\alpha}^{q_{i+1}}(\gamma)$, on the bottom by the circle $C_{j}=\left\{ \left\| x \right\|= \frac{j}{M} \right\}$ and on top by the circle $C_{j+1}=\left\{ \left\| x \right\|= \frac{j+1}{M} \right\}$. We take $N$ and $M$ sufficiently large so that the two following properties hold:
\begin{enumerate}
\item For any $i \in \mathbb{Z} / N \mathbb{Z}$ and any $1 \leq j \leq M-1$, the square $S_{i,j}$ has a diameter smaller than $\epsilon$.
\item The diameter of the set $\left\{ \left\| x \right\| \leq \frac{1}{M} \right\}$ is smaller than $\epsilon$.
\end{enumerate}

We will find a similar decomposition for $f$ in order to build our conjugation. Denote by $x$ the point $(1,0)$. Use Proposition \ref{finiteconj} and Lemma \ref{extension} to find a homeomorphism $h$ in $\mathrm{Homeo}_{0}(\mathbb{D}^{2})$ such that, for any $0 \leq k \leq N$,
$$h(f^{k}(x))=R_{\alpha}^{k}(h(x))=R_{\alpha}^{k}(x).$$
We want to build a homeomorphism $h'$ such that the homeomorphism $h'hfh^{-1}h'^{-1}$ is close to the rotation $R_{\alpha}$.

Let $\delta: [0,1] \rightarrow \mathbb{D}^{2}$ be an embedded arc with the following properties:
\begin{enumerate}
\item$\delta([0,1)) \cap \partial \mathbb{D}^{2} = \emptyset$.
\item $\delta(1)=x$.
\item The arcs $\delta_{k}=(hfh^{-1})^{k}(\delta)$, for $0 \leq k \leq N$, are pairwise disjoint.
\end{enumerate}
Observe that any small enough arc satisfying the two first properties also satisfies the third property. Observe also that the arcs $hf^{k}h^{-1}(\delta)$, for $0 \leq k \leq N$, are in the same order as the arcs $R_{\alpha}^{k}(\gamma)$. Indeed, for any $k$ the arc $hf^{k}h^{-1}(\delta)$ has the same endpoint as the arc $R_{\alpha}^{k}(\gamma)$. These arcs $hf^{k}h^{-1}(\delta)$ will be sent to the arcs $R_{\alpha}^{k}(\gamma)_{[\frac{1}{M},1]}$ under $h'$. 

Now, we construct the curves which will be sent to the circles $C_{i}$ under $h'$. Let $C'_{1}$ be a simple loop $\mathbb{S}^{1} \rightarrow \mathbb{D}^{2}$ contained in the interior of the disc which contains the points $(hfh^{-1})^{k}(\delta(0))$ for $0 \leq k \leq N$ and which does not contain any other point of the arcs $(hfh^{-1})^{k}(\delta)$, for $0 \leq k \leq N$. We can then construct by induction a family of simple loops $(C'_{i})_{1 \leq i \leq M}$ with the following properties:
\begin{enumerate}
\item For any $1 \leq i < j \leq M$, the loops $C'_{j}$ and $hfh^{-1}(C'_{j})$ are disjoint from the loops $C'_{i}$ and $hfh^{-1}(C'_{i})$ and lie above $C'_{i}$ and $hfh^{-1}(C'_{i})$ (\emph{i.e.} they belong to the same connected component of $\mathbb{D}^{2}-C'_{i}$ and $\mathbb{D}^{2}-hfh^{-1}(C'_{i})$ as the boundary $\partial \mathbb{D}^{2}$).
\item For any $i$ and $j$ with $0 \leq i \leq N$ and $1 \leq j \leq M$, each loop $C'_{j}$ meets each of the arcs $\delta_{i}$ in exactly one point.  
\end{enumerate}

These properties enable us to construct a homeomorphism $h'$ in $\mathrm{Homeo}_{0}(\mathbb{D}^{2})$ with the following properties.
\begin{enumerate}
\item For any $0 \leq i \leq N$, $h'(hf^{i}h^{-1}(\delta))=R_{\alpha}^{i}(\gamma_{|[1,M]})$.
\item  For any $1 \leq j \leq M$, $h'(C'_{j})=C_{j}$.
\end{enumerate}

Denote by $S$ a connected component of the complement of $\bigcup \limits _{j=1}^{m} C_{j} \cup \bigcup \limits _{i=0}^{N-1}R_{\alpha}^{i}(\gamma([\frac{1}{M},1]))$ which is different from the disc $D_{\frac{1}{M}}$ of center $0$ and radius $\frac{1}{M}$. Then there exist $i$ and $j$ such that $R_{\alpha}(S)=S_{i,j}$. By construction, the image under $h'hf(h'h)^{-1}$ of $S$ is contained in
\begin{itemize}
\item $S_{i,j} \cup S_{i,j-1} \cup S_{i,j+1}$ if $j>1$.
\item $S_{i,1} \cup D_{\frac{1}{M}} \cup S_{i,2}$ if $j=1$.
\end{itemize}
Moreover, the homeomorphism $h'hf(h'h)^{-1}$ sends the disc $D_{\frac{1}{M}}$ to $D_{\frac{1}{M}} \cup \bigcup_{i} S_{i,1}= R_{\alpha}(D_{\frac{1}{M}}) \cup \bigcup_{i} S_{i,1}$. As the sets $D_{\frac{1}{M}}$ and $S_{i,j}$ have a diameter smaller than $\epsilon$, we deduce that
$$d(R_{\alpha}, h'hf(h'h)^{-1}) <2 \epsilon,$$
where $d$ denotes the uniform distance.

\noindent \underline{Second case:} Case where $\rho(f)=\frac{p}{q}$ is rational. This case is similar to the first one: we will skip some details. For notational reasons, the unit circle is identified with $\mathbb{R} / \mathbb{Z}$. Fix large integers $N>0$ and $M>0$. First, use Proposition \ref{finiteconjrat} to obtain intervals $(I_{i})_{0 \leq i \leq Np}$ corresponding to $f_{|\partial \mathbb{D}^{2}}$. Then use Lemma \ref{extension} to obtain a homeomorphism $h$ of the disc  which sends the interval $I_{j}$ to the interval $[\frac{i}{Nq},\frac{i+1}{Nq}]$ of the circle $\partial \mathbb{D}^{2}$. 

For any $0 \leq i \leq Nq-1$, take a small arcs $\delta_{i}: [0,1] \rightarrow \mathbb{D}^{2}$ which touch $\partial \mathbb{D}^{2}$ only at the point $\delta_{i}(1)=\frac{i}{Nq} \in \partial \mathbb{D}^{2}$. Choose these arcs so that they are pairwise disjoint. For any $1 \leq j \leq M$, take a loop $C'_{j}$ which meets each $\delta_{i}$ in only one point. Construct them so that the following properties hold.
\begin{enumerate}
\item $C'_{M}= \partial \mathbb{D}^{2}$.
\item The loop $C'_{1}$ meets each curve $\delta_{i}$ at $\delta_{i}(0)$.
\item For any $j \geq 1$, the loop $C'_{j}$ is above $C'_{j-1}$ (disjoint from $C'_{j-1}$ and in the same connected component of $\mathbb{D}^{2}-C'_{j}$ as $\partial \mathbb{D}^{2}$).
\item For any $1 \leq j \leq M-1$, the curve $hfh^{-1}(C'_{j})$ is disjoint from $C'_{j-1}$ and $C'_{j+1}$.
\end{enumerate}

Construct then a homeomorphism $h'$ in $\mathrm{Homeo}_{0}(\mathbb{D}^{2})$ with the following properties.
\begin{enumerate}
\item It sends each loop $C'_{j}$ to the circle of radius $\frac{j}{M}$.
\item It sends each curve $\delta_{i}$ to the straight line contained in a radius of the unit disc joining the circle of radius $\frac{1}{M}$ to the point $\frac{i}{Nq}$ of the circle $\partial \mathbb{D}^{2}$.
\end{enumerate}

One can check that the homeomorphism $h'hf(h'h)^{-1}$ is close to the rotation of angle $\rho(f)$ if $N$ and $M$ are chosen sufficiently large.
\end{proof}

\section{Conjugacy classes: case of the annulus} \label{sectannulus}

This section is devoted to the proof of Theorem \ref{conjugacyclassannulus}. This proof uses the notion of rotation set of a homeomorphism of the annulus isotopic to the identity. For more background on this notion, see the article \cite{MZ} by Misiurewicz and Ziemian. In the quoted article, the notion is introduced in the case of homeomorphisms of the torus but everything carries over in the (easier) case of the annulus. For any homeomorphism $f$ in $\mathrm{Homeo}_{0}(\mathbb{A})$, we denote by $\rho(f)$ its rotation set.

A simple curve $\gamma : [0,1] \rightarrow \mathbb{S}^{1} \times [0,1]= \mathbb{A}$ (respectively $\gamma : [0,1] \rightarrow \mathbb{R} \times [0,1]$) is said to \emph{join the two boundary components of the annulus (respectively the strip)} if:
\begin{itemize}
\item $\gamma(0) \in \mathbb{S}^{1} \times \left\{ 0 \right\}$ and $\gamma(1) \in \mathbb{S}^{1} \times \left\{1 \right\}$ (respectively $\gamma(0) \in \mathbb{R} \times \left\{ 0 \right\}$ and $\gamma(1) \in \mathbb{R} \times \left\{1 \right\}$).
\item $\gamma((0,1)) \subset \mathbb{S}^{1} \times (0,1)$ (respectively $\gamma((0,1)) \subset \mathbb{R} \times (0,1)$).
\end{itemize}

Given a simple curve $\gamma$ which joins the two boundary components of the strip $\mathbb{R} \times [0,1]$, the set $\mathbb{R}\times [0,1]- \gamma([0,1])$ consists of two connected components. As the curve $\gamma$ is oriented by the parametrization, it makes sense to say that one of them is on the right of $\gamma$ and the other one is on the left of $\gamma$.

\begin{definition}
Take a simple curve $\gamma$ which joins the two boundary components of the strip $\mathbb{R} \times [0,1]$. A subset of $\mathbb{R} \times [0,1]$ is said to \emph{lie strictly on the right} (respectively \emph{strictly on the left}) of the curve $\gamma$ if it is contained on the connected component of $\mathbb{R} \times [0,1]-\gamma$ on the right (respectively on the left) of $\gamma$.
\end{definition}

\begin{definition}
Take three pairwise disjoint simple curves $\gamma_{1}$, $\gamma_{2}$ and $\gamma_{3}$ which join the two boundary components of the annulus. We say that the curve $\gamma_{2}$ lies strictly between the curves $\gamma_{1}$ and $\gamma_{3}$ if the following property is satisfied. There exists lifts $\tilde{\gamma}_{1}$ and $\tilde{\gamma}_{2}$ to the strip of respectively $\gamma_{1}$ and $\gamma_{2}$ such that:
\begin{enumerate}
\item The curve $\tilde{\gamma}_{2}$ lies strictly on the right of $\tilde{\gamma_{1}}$.
\item For any lift $\tilde{\gamma}_{3}$ of the curve $\gamma_{3}$ which lies strictly on the right of $\tilde{\gamma}_{1}$, the curve $\tilde{\gamma}_{2}$ lies strictly on the left of $\tilde{\gamma}_{3}$.
\end{enumerate}
\end{definition}

Notice that a curve which lies strictly between $\gamma_{1}$ and $\gamma_{3}$ does not lie strictly between $\gamma_{3}$ and $\gamma_{1}$.

\begin{proposition} \label{pavageverticalanneau} (see Figure \ref{fig})
Let $f$ be a homeomorphism in $\mathrm{Homeo}_{0}(\mathbb{A})$ and $p$ and $q$ be integers such that either $q>0$, $0<p<q$ and $p$ and $q$ are mutually prime or $p=0$ and $q=1$. Let us fix an integer $n>1$. Suppose that $\rho(f)= \left\{ \frac{p}{q} \right\}$. Then there exists a family of pairwise disjoint simple curves $(\gamma_{i})_{i \in \mathbb{Z} / nq \mathbb{Z}}$ which join the two boundary components of the annulus such that, for any index $i$:
\begin{enumerate}
\item The curve $\gamma_{i}$ lies strictly between the curves $\gamma_{i-1}$ and $\gamma_{i+1}$.
\item The curve $f(\gamma_{i})$ lies strictly between the curves $\gamma_{i+np-1}$ and $\gamma_{i+np+1}$.
\end{enumerate}
\end{proposition}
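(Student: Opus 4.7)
My plan is to work in the universal cover, the strip $\mathbb{R}\times[0,1]$, and to construct a $T$-equivariant family $(\tilde\gamma_i)_{i\in\mathbb{Z}}$ of pairwise disjoint arcs joining the two boundary components of the strip, ordered left-to-right by index, with $T(\tilde\gamma_i)=\tilde\gamma_{i+nq}$ and $\tilde f(\tilde\gamma_i)$ lying strictly between $\tilde\gamma_{i+np-1}$ and $\tilde\gamma_{i+np+1}$; projecting to $\mathbb{A}$ then yields the desired family. Fix the lift $\tilde f$ having rotation number $p/q$: the hypothesis $\rho(f)=\{p/q\}$ gives (via a Misiurewicz--Ziemian argument) a uniform deviation bound $|p_1(\tilde f^m(\tilde x))-mp/q|\le K$ for all $m\ge 0$ and all $\tilde x$, which provides the quantitative control used below.

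The first step is to extract a periodic orbit of the correct rotation number. The lift $\tilde g:=\tilde f^{\,q}\circ T^{-p}$ has rotation set $\{0\}$, so a Franks-type fixed-point theorem for annulus homeomorphisms furnishes $\tilde x_0$ with $\tilde g(\tilde x_0)=\tilde x_0$; setting $\tilde x_j:=\tilde f^{\,j}(\tilde x_0)$ then descends to a periodic orbit $\{x_0,\ldots,x_{q-1}\}$ of $f$ with combinatorial rotation number $p/q$. Because $\gcd(p,q)=1$, sorting the lifts $\tilde x_0,\ldots,\tilde x_{q-1}$ from left to right in one fundamental domain of $T$ realises $\tilde f$ on these points as a combinatorial rotation of step $p$ on $\mathbb{Z}/q\mathbb{Z}$.

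Next I place $nq$ curves in one fundamental domain of $T$: for each $k=0,\ldots,q-1$, an \emph{anchor} $\tilde\gamma_{kn}$ passing through the corresponding periodic point, and between consecutive anchors the $n-1$ intercalated curves $\tilde\gamma_{kn+1},\ldots,\tilde\gamma_{kn+n-1}$ joining the two boundary components. All $nq$ curves are confined to pairwise disjoint thin tubes $V_i$ around vertical segments, and the $T$-translates yield $\tilde\gamma_i$ for all $i\in\mathbb{Z}$. Thinness of the anchor tubes guarantees that $\tilde f(\tilde\gamma_{kn})$, which passes through the correct next periodic point, lies between $\tilde\gamma_{(k+p)n-1}$ and $\tilde\gamma_{(k+p)n+1}$; in other words, condition (2) is achieved on the anchor indices $i\equiv 0\pmod n$.

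The main obstacle is to arrange condition (2) also for the intercalated curves, i.e.~to ensure $\tilde f(\tilde\gamma_{kn+j})$ lies between $\tilde\gamma_{(k+p)n+j-1}$ and $\tilde\gamma_{(k+p)n+j+1}$ for every $j=1,\ldots,n-1$. I would handle this by choosing the intercalated curves inductively on $j$ inside one block, propagating to all blocks by the combinatorial shift of $\tilde f$ and by $T$-equivariance, and adjusting each curve by a small isotopy supported inside its tube so that its image lands in the prescribed slot. The uniform deviation bound, together with the fact that the anchor tubes can be taken as thin as desired around the periodic orbit, is what makes these local adjustments possible and what keeps the combinatorial shift of $\tilde f$ equal to exactly $np$ rather than a larger integer; executing these topological manipulations rigorously is the real content of the proposition.
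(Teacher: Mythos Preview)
Your plan has a genuine gap at the step you flag as ``achieved'' for the anchor curves. Passing $\tilde\gamma_{kn}$ through the periodic point $\tilde x_k$ only pins \emph{one point} of $\tilde f(\tilde\gamma_{kn})$; the rest of that image curve can wander arbitrarily far horizontally. Thinness of the \emph{source} tube $V_{kn}$ gives no control whatsoever on the shape of $\tilde f(V_{kn})$, and the intercalated curves $\tilde\gamma_{(k+p)n\pm 1}$ are, by your own set-up, confined to thin nearly-vertical tubes. So there is no reason for $\tilde f(\tilde\gamma_{kn})$ to lie between them, and in general it will not. The uniform deviation bound $|p_1(\tilde f^m(\tilde x))-mp/q|\le K$ (even granting it, which is not automatic from $\rho(f)=\{p/q\}$ for rational $p/q$) is of no help here: $K$ is a fixed constant that has nothing to do with the spacing of your tubes, which you need to make small as $n$ grows. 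The same defect propagates to your inductive treatment of the intercalated curves: an isotopy of $\tilde\gamma_{kn+j}$ ``supported inside its tube'' cannot move $\tilde f(\tilde\gamma_{kn+j})$ into a prescribed slot if that image is already spread across many slots.

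What is missing is exactly the dynamical input the paper uses. The paper does not start from a periodic orbit and geometry; it starts from Proposition~3.1 of B\'eguin--Crovisier--Le Roux--Patou (Lemma~\ref{commutinghomeos} here), which produces a single arc $\tilde\gamma'$ that is pushed strictly to the right by an entire commuting family $T^{t(i+1)-jp}\tilde f^{\sigma(i+1)+jq}T^{-t(i)}\tilde f^{-\sigma(i)}$ of lifts with positive rotation. The $q$ iterates $f^{\sigma(i)}(\gamma')$ of this Brouwer-type arc then automatically satisfy the required order relations for the base case $n=1$, and the paper inducts on $n$ by inserting a new curve between consecutive ones (this insertion is where the ``many iterates disjoint'' version $P(N,n)$ is needed). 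In short: the curves have to be \emph{constructed from the dynamics} (via a Brouwer/free-arc argument), not placed geometrically near a periodic orbit; the latter gives you the right combinatorics on a finite set of points but no control on arcs.
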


\begin{rema}
In the case of the rotation $R_{\frac{p}{q}}$, note that it suffices to take $\gamma_{i}(t)=(\frac{i}{nq},t)$.
\end{rema}

\begin{figure}[ht]
\begin{center}
\includegraphics[scale=0.5]{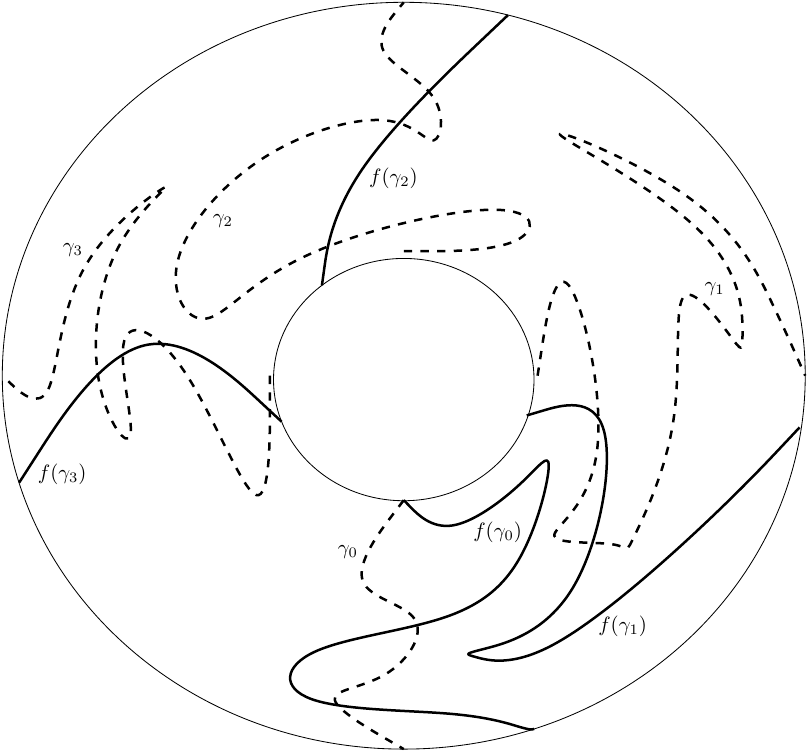}
\end{center}
\caption{Illustration of Proposition \ref{pavageverticalanneau} in the case $p=0$, $q=1$ and $n=4$}
\label{fig}
\end{figure}

For technical reasons, it is more convenient to prove the following stronger proposition.

\begin{proposition}
Let $f$ be a homeomorphism in $\mathrm{Homeo}_{0}(\mathbb{A})$ and $p$ and $q$ be integers such that either $q>0$, $0<p<q$ and $p$ and $q$ are mutually prime or $p=0$ and $q=1$. Suppose that $\rho(f)= \left\{ \frac{p}{q} \right\}$. Let us fix integers $n>0$ and $N>0$. Then there exists a family of pairwise disjoint simple curves $(\gamma'_{i})_{ i \in \mathbb{Z} / nq \mathbb{Z}}$ which join the two boundary components of the annulus such that, for any index $i$ and any integer $0\leq k \leq N$:
\begin{enumerate}
\item If $n \neq 1$ or $q \neq 1$, the curve $f^{k}(\gamma'_{i})$ lies strictly between the curves $\gamma'_{i+knp-1}$ and $\gamma'_{i+knp+1}$.
\item If $n=q=1$, any lift of the curve $f^{k}(\gamma'_{1})$ meets at most one lift of the curve $\gamma'_{1}$.
\end{enumerate}
\end{proposition}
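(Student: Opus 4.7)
The plan is to lift the problem to the universal cover $\mathbb{R}\times[0,1]$ of the annulus, work with the canonical lift $\tilde f$ commuting with the deck transformation $T(x,y)=(x+1,y)$, and exploit the uniform rotation estimate coming from Misiurewicz-Ziemian theory. The assumption $\rho(f)=\{p/q\}$ yields that for every fixed $N$ there is a constant $K>0$ such that
\[
\bigl|p_1(\tilde f^k(\tilde x))-kp/q\bigr|\le K,\qquad 0\le k\le N,\ \tilde x\in\mathbb{R}\times[0,1].
\]
The $nq$ curves on $\mathbb{A}$ correspond to a $\mathbb{Z}$-equivariant family $(\tilde{\gamma}'_i)_{i\in\mathbb{Z}}$ of pairwise disjoint simple arcs on the strip joining its two boundary components, with $T\tilde{\gamma}'_i=\tilde{\gamma}'_{i+nq}$ and ordered (by horizontal position) by increasing index. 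The required ``strictly between'' condition on $\mathbb{A}$ becomes: for every $0\le k\le N$ and every $i\in\mathbb{Z}$, the arc $\tilde f^k(\tilde{\gamma}'_i)$ lies strictly between $\tilde{\gamma}'_{i+knp-1}$ and $\tilde{\gamma}'_{i+knp+1}$ in the strip.

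I would construct the family by a finite-time optimization over the first $N$ iterates. Start from the naive straight family $\tilde{\beta}_i(t)=(i/(nq),t)$ and define $\tilde{\gamma}'_i$ as the leftmost ``vertical frontier'' of the region
\[
R_i \;=\; \bigcap_{k=0}^{N}\tilde f^{-k}\!\bigl(\{x\ge \phi_{i+knp}(y)\}\bigr),
\]
where each $\phi_j(y)$ is a height-dependent profile lying slightly to the left of the target vertical $x=j/(nq)$ (with a buffer of order $K/N$ provided by the Misiurewicz-Ziemian bound). The uniform estimate ensures $R_i$ meets every horizontal line $\mathbb{R}\times\{y\}$; the commutation $\tilde f T=T\tilde f$ gives the $\mathbb{Z}$-equivariance $R_{i+nq}=T(R_i)$; and monotonicity of the profiles in $i$ forces pairwise disjointness of the resulting frontier curves. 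The ``between'' condition is then built into the construction: by definition, $\tilde f^k(\tilde{\gamma}'_i)$ lies to the right of the profile for index $i+knp-1$ (and a symmetric description from above bounds it on the right).

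The main obstacle is topological: one must show that the leftmost boundary of each $R_i$ is in fact an embedded arc joining the two components of the strip's boundary, rather than a wild continuum or a multi-component set, and that the collection of such arcs is genuinely pairwise disjoint (not just nested). This is where the pseudo-rotation assumption enters qualitatively rather than merely quantitatively, by ruling out non-trivial ``vertical folding'' of the images of vertical curves under the iterates $\tilde f^k$. A Moore- or Brouwer-type argument, applied to the connected components of the $R_i$ and their frontiers, should deliver the embedded arcs. The degenerate case $n=q=1$ is handled separately and more easily: one starts with an arbitrary simple arc $\tilde{\gamma}'_1$ and observes directly from the bound $K$ that $\tilde f^k(\tilde{\gamma}'_1)$ meets only finitely many translates $T^j\tilde{\gamma}'_1$; a local adjustment of $\tilde{\gamma}'_1$, using that $\tilde f$ commutes with $T$, reduces this intersection to a single translate for every $0\le k\le N$, which is exactly the stated conclusion in this case.
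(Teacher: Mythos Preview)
Your proposal has a genuine gap at exactly the point you flag as ``the main obstacle,'' and the difficulty there is more serious than a Moore- or Brouwer-type argument can dispatch. Two concrete problems:

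\emph{(i) The frontier construction does not yield the required ``strictly between'' relation.} Even if the leftmost boundary of each $R_i$ were an embedded arc, your construction only compares $\tilde f^k(\tilde\gamma'_i)$ to the auxiliary profiles $\phi_{i+knp}$, not to the curves $\tilde\gamma'_{i+knp\pm 1}$ themselves. The curve $\tilde\gamma'_{i+knp-1}$ is the frontier of a region defined by \emph{its own} finite-time constraints, and there is no reason it should lie on a specific side of the profile $\phi_{i+knp-1}$; in particular the displacement bound $K$ can be arbitrarily large compared with the spacing $1/(nq)$, so the buffers ``of order $K/N$'' do not separate adjacent frontiers. Monotonicity in $i$ gives nesting $R_{i-1}\supset R_i$, hence at best $\tilde\gamma'_{i-1}\le\tilde\gamma'_i$, but not strict disjointness, and it says nothing about where $\tilde f^k(\tilde\gamma'_i)$ sits relative to $\tilde\gamma'_{i+knp\pm 1}$.

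\emph{(ii) The pseudo-rotation hypothesis does not rule out ``vertical folding''.} A pseudo-rotation of angle $p/q$ can have images $\tilde f^k(\{x=c\})$ with arbitrarily wild horizontal oscillation on any finite time window; only the \emph{asymptotic} linear speed is constrained. So the qualitative claim that the hypothesis prevents the frontier from being a wild continuum is unfounded. (Something like a Ker\'ekj\'art\'o argument can show that a connected component of a finite intersection of Jordan half-planes is a Jordan domain, but you then still face the comparison problem in (i).)

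The paper's proof takes a completely different route: induction on $n$, with the base case $n=1$ handled by the B\'eguin--Crovisier--Le Roux--Patou lemma on pairwise commuting homeomorphisms of the strip with positive rotation set. One applies that lemma to a finite list of homeomorphisms of the form $T^{t(i+1)-jp}\tilde f^{\sigma(i+1)+jq}T^{-t(i)}\tilde f^{-\sigma(i)}$ (and their companions), where $\sigma$ is the permutation recording the cyclic order of the multiples of $p/q$ modulo $1$; this produces a single arc $\tilde\gamma'$ whose first $q$ images under $f$ are already correctly ordered and satisfy the $N$-step condition. The inductive step inserts one new curve per period between consecutive existing curves, using that many iterates of adjacent curves from the previous stage are disjoint. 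The case $n=q=1$ falls out of the same base-case argument rather than from a separate ``local adjustment''. This is where the real work is, and your proposal does not supply an alternative to it.
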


The case $N=1$ of this proposition yields directly Proposition \ref{pavageverticalanneau}.

\begin{proof}
We say that a finite sequence of curves $(\gamma'_{i})_{ i \in \mathbb{Z} / nq \mathbb{Z}}$ satisfies property $P(N,n)$ if it satisfies the conclusion of the proposition. We prove by induction on $n$ that, for any $N$, there exist curves $(\gamma'_{i})_{i \in \mathbb{Z}/qn \mathbb{Z}}$ which satisfy property $P(N,n)$ and such that, for any index $n \leq i <nq$, $\gamma'_{i}= f(\gamma'_{i-np})$.

We first check the case where $n=1$, which is actually the most difficult one. The proof in this case relies on the following lemma due to Béguin, Crovisier, Le Roux and Patou (see \cite{BCLRP}, Proposition 3.1). Let 
$$\mathrm{Homeo}_{\mathbb{Z}}(\mathbb{R})=\left\{ f \in \mathrm{Homeo}(\mathbb{R} \times [0,1]), \forall(x,y) \in \mathbb{R} \times [0,1], f(x+1,y)=f(x,y)+(1,0) \right\}.$$

\begin{lemma} \label{commutinghomeos}
Let $F_{1}, \ldots, F_{l}$ be pairwise commuting homeomorphisms in $\mathrm{Homeo}_{\mathbb{Z}}(\mathbb{R} \times [0,1])$. Suppose that, for any index $i$, $\rho(F_{i}) \subset (0,+\infty)$. Then there exists an essential simple curve $\tilde{\gamma}: [0,1] \rightarrow \mathbb{R} \times [0,1]$ which joins the two boundary components of the strip and satisfies the following property. For any index $i$, the curve $F_{i}(\tilde{\gamma})$ lies strictly on the right of the curve $\tilde{\gamma}$.
\end{lemma}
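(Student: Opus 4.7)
The plan is to construct an essential simple curve playing the role of a common Brouwer line for the commuting family $F_{1}, \ldots, F_{l}$. The two inputs I would rely on are the uniform Misiurewicz--Ziemian estimate (from $\rho(F_{i}) \subset (0,+\infty)$ one extracts, for each $i$, an integer $N_{i}$ and a real $\delta_{i} > 0$, independent of $x$, with $p_{1}(F_{i}^{N_{i}}(x)) \geq p_{1}(x) + \delta_{i}$) and the following invariance: if a curve $\tilde{\gamma}$ is pushed strictly to the right by $F_{i}$ and $F_{j}$ commutes with $F_{i}$, then $F_{j}(\tilde{\gamma})$ is also pushed strictly right by $F_{i}$, because $F_{i}(F_{j}(\tilde{\gamma})) = F_{j}(F_{i}(\tilde{\gamma}))$ and $F_{j}$ preserves the left/right order along essential curves.

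I would first dispatch the single homeomorphism case $l=1$. Start from $\tilde{\gamma}_{0} = \{0\} \times [0,1]$; by the uniform estimate the iterates $F_{1}^{k}(\tilde{\gamma}_{0})$ march off uniformly to the right as $k \to +\infty$. Let
$$A = \bigcup_{k \geq 0} F_{1}^{k}(\tilde{\gamma}_{0}),$$
and define $\tilde{\gamma}$ as the boundary of the connected component of $(\mathbb{R} \times [0,1]) \setminus \overline{A}$ whose points escape to $p_{1} = -\infty$, after a small generic perturbation of $\tilde{\gamma}_{0}$ ensuring transversality of the iterates. Such a $\tilde{\gamma}$ should be an essential simple curve joining the two boundary components of the strip; since $F_{1}(\overline{A}) \subset \overline{A}$, the image $F_{1}(\tilde{\gamma})$ lies weakly on the right, and strictness follows from a compactness argument using the uniform lower bound $\delta_{1}$ and the $\mathbb{Z}$-periodicity of $F_{1}$.

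For the general case I would apply the same construction to the whole semigroup generated by $F_{1}, \ldots, F_{l}$: set
$$A = \bigcup_{(n_{1}, \ldots, n_{l}) \in \mathbb{N}^{l}} F_{1}^{n_{1}} F_{2}^{n_{2}} \cdots F_{l}^{n_{l}}(\tilde{\gamma}_{0})$$
and define $\tilde{\gamma}$ as the left frontier of $\overline{A}$ in the same way. Commutativity is precisely what ensures $F_{j}(A) \subset A$ for each $j$, because composing a product $F_{1}^{n_{1}} \cdots F_{l}^{n_{l}}$ on the left with $F_{j}$ rewrites as the same product with $n_{j}$ increased by one. Consequently $F_{j}(\tilde{\gamma})$ lies weakly on the right of $\tilde{\gamma}$ for every $j$, and the strict inequality comes from the uniform estimate applied to each $F_{j}$ separately.

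The hard part is checking that the left frontier of $\overline{A}$ is actually a simple essential curve and not a complicated closed set. The iterates $F_{1}^{n_{1}} \cdots F_{l}^{n_{l}}(\tilde{\gamma}_{0})$ typically cross each other in tangled ways, so one must single out the unique connected component of the complement of $\overline{A}$ that escapes to $p_{1} = -\infty$ and then verify that its boundary is the image of an embedding of $[0,1]$ meeting each component of $\mathbb{R} \times \{0,1\}$ exactly once. A generic perturbation of $\tilde{\gamma}_{0}$, combined with arguments using $\mathbb{Z}$-periodicity of the $F_{i}$ and their uniform rightward drift, should handle this, but arranging the perturbation to be compatible with every element of the full semigroup simultaneously is where the real work lies.
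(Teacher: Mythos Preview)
The paper does not give its own proof of this lemma: it is attributed to B\'eguin--Crovisier--Le Roux--Patou and the reader is referred to \cite{BCLRP}, Proposition~3.1. There is thus no in-paper argument to compare against, and your proposal must stand on its own.

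The architecture is sound---commutativity gives $F_j(A)\subset A$ for the semigroup orbit $A$, and the left frontier of such a forward-invariant set is the natural candidate for a common Brouwer line---but the gap you yourself flag is genuine and your proposed patch does not close it. A generic perturbation of $\tilde\gamma_0$ can at best arrange transversality among finitely many of the arcs $F_1^{n_1}\cdots F_l^{n_l}(\tilde\gamma_0)$; the semigroup is infinite, and in any case pairwise transversality of a countable family of arcs says nothing about whether the frontier of the unbounded left component of $(\mathbb R\times[0,1])\setminus\overline A$ is an embedded arc: accumulation of the family can produce a boundary that is not locally connected. Your ``strictness from compactness'' step is likewise incomplete, since the uniform drift bound you invoke holds only for the power $F_i^{N_i}$, not for $F_i$ itself, and nothing in the construction excludes $\tilde\gamma\cap F_i(\tilde\gamma)\neq\emptyset$.

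A repair that sidesteps the closure pathology is to work with a \emph{finite} infimum of curves rather than the frontier of an infinite union. The drift estimate shows that only finitely many multi-indices $\mathbf n\in\mathbb N^l$ yield curves $F^{\mathbf n}(\tilde\gamma_0)$ that are not already strictly to the right of $\tilde\gamma_0$; taking $\tilde\gamma$ to be the infimum of $\tilde\gamma_0$ and those finitely many curves (via the Ker\'ekj\'art\'o-type construction used in Lemma~\ref{inf} of the present paper, adapted to the strip) produces a genuine simple arc lying on the left of every $F^{\mathbf n}(\tilde\gamma_0)$, $\mathbf n\in\mathbb N^l$. Since $F_i(\tilde\gamma)$ is then an infimum over the shifted family $\{F^{\mathbf n+e_i}(\tilde\gamma_0)\}$, it lies weakly on the right of $\tilde\gamma$; upgrading to ``strictly'' becomes a finite perturbation problem for a single fixed arc against finitely many images, which is tractable. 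As written, however, your proposal is an outline whose central topological step is missing.
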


Fix an integer $N>0$. We denote by $\tilde{f}$ the lift of the homeomorphism $f$ such that $\rho(\tilde{f})= \left\{ \frac{p}{q} \right\}$ and by $T$ the translation of $\mathbb{R} \times [0,1]$ defined by $(x,t) \mapsto (x+1,t)$. Consider the unique permutation $\sigma$ of $\llbracket 1, q-1 \rrbracket= \left\{ 1, \ldots, q-1 \right\}$, such that there exists a finite sequence of integers $(t(i))_{1 \leq i \leq q-1}$ with
$$ 0 < \sigma(1) \frac{p}{q} +t(1) < \sigma(2) \frac{p}{q} + t(2) < \ldots < \sigma(q-1) \frac{p}{q} +t(q-1) <1.$$
Notice that $\sigma(i) \frac{p}{q}+t(i)= \frac{i}{q}$. Hence $\sigma(i)$ is equal to $\frac{i}{p}$ mod $q$ (observe that $p$ is invertible in $\mathbb{Z}/ q \mathbb{Z}$ as the integers $p$ and $q$ are mutually prime). Equivalently, the integer $\sigma^{-1}(i)$ is the unique representative in $\llbracket 1,q-1 \rrbracket$ of $ip$ mod $q$. To simplify notation, let $\sigma(0)=0$, $\sigma(q)=0$, $t(0)=0$ and $t(q)=1$. Let $M$ be any integer greater than $\frac{N}{q}$. We now apply Lemma \ref{commutinghomeos} to the homeomorphisms of one of the following forms, for $0 \leq j \leq M$ and $0 \leq i \leq q-1$:
\begin{enumerate}
\item $T^{t(i+1)-jp}\tilde{f}^{\sigma(i+1)+jq}T^{-t(i)}\tilde{f}^{-\sigma(i)}$ whose rotation set is 
$$\left\{ t(i+1)+ \sigma(i+1)\frac{p}{q}-t(i)-\sigma(i) \frac{p}{q} \right\} \subset (0, + \infty).$$
\item $T^{t(i+1)}\tilde{f}^{\sigma(i+1)}T^{-t(i)+jp}\tilde{f}^{-\sigma(i)-jq}$ whose rotation set is 
$$\left\{ t(i+1)+ \sigma(i+1)\frac{p}{q}-t(i)-\sigma(i) \frac{p}{q} \right\} \subset (0, + \infty).$$
\end{enumerate}
Lemma \ref{commutinghomeos} provides a simple curve $\tilde{\gamma}': [0,1] \rightarrow [0,1] \times \mathbb{R}$ such that, for any $j \in \llbracket 0,M \rrbracket$,  and any $i \in \llbracket 0,q-1 \rrbracket$:
\begin{enumerate}
\item The curve $T^{t(i+1)-jp} \tilde{f}^{\sigma(i+1)+jq}(\tilde{\gamma}')$ lies strictly on the right of the curve $T^{t(i)}\tilde{f}^{\sigma(i)}(\tilde{\gamma}')$.
\item The curve $T^{t(i+1)} \tilde{f}^{\sigma(i+1)}(\tilde{\gamma}')$ lies strictly on the right of the curve $T^{t(i)-jp}\tilde{f}^{\sigma(i)+jq}(\tilde{\gamma}')$.
\end{enumerate}
In particular, by the first property above with $j=0$, the curve $T^{1}(\tilde{\gamma}')$ lies strictly on the right of the curve $T^{t(q-1)}\tilde{f}^{\sigma(q-1)}(\tilde{\gamma}')$ which lies itself strictly on the right of the curve $T^{t(q-2)}\tilde{f}^{\sigma(q-2)}(\tilde{\gamma}')$ and so forth. Hence the curve $T^{1}(\tilde{\gamma}')$ lies strictly on the right of the curve $\tilde{\gamma}'$: the projection $\gamma'$ of the curve $\tilde{\gamma}'$ on the annulus is a simple curve. We set $\gamma'_{i}= f^{\sigma(i)}(\gamma')$. We have seen that the curve $T^{t(i)}\tilde{f}^{\sigma(i)}(\tilde{\gamma}')$ is the unique lift of the curve $\gamma'_{i}$ which lies between the curves $\tilde{\gamma}'$ and $T^{1}( \tilde{\gamma}')$. Let us check that the curves $\gamma'_{i}$ satisfy the desired properties.  

Fix $i' \in \llbracket 0, q-1 \rrbracket$ and $k \in \llbracket 0,N \rrbracket$. Perform the Euclidean division of $k+\sigma(i')$ by $q$: $k+\sigma(i')=jq+r$. By the two above properties, the curve $f^{jq+r}(\gamma')$ lies strictly between the curves $\gamma'_{\sigma^{-1}(r)-1}$ and $\gamma'_{\sigma^{-1}(r)+1}$. To see this, if $r \neq 0$, apply the first property for $i=\sigma^{-1}(r)-1$ and the second property for $i=\sigma^{-1}(r)$, and, if $r=0$, apply the first property for $i=q-1$ and the second property for $i=0$. Now, remember that, modulo $q$:
$$\begin{array}{rcl}
 \sigma^{-1}(r) & = & \sigma^{-1}(k+\sigma(i')-jq) \\
 & = & (k+ \sigma(i'))p \\
 & = & kp+i'.
\end{array}
$$
This proves the proposition when $n=1$.

Suppose that there exist curves $(\alpha_{i})_{ i \in \mathbb{Z}/nq\mathbb{Z}}$ which satisfy $P(2Nq,n)$ and such that, for any index $n \leq i <nq$, $\alpha_{i}= f(\alpha_{i-np})$. Let us construct curves $(\gamma'_{i})_{i \in \mathbb{Z}/(n+1)q \mathbb{Z}}$ which satisfy $P(Nq,n+1)$ (hence $P(N,n+1)$) and such that for any index $n+1 \leq i <(n+1)q$, $\gamma'_{i}= f(\gamma'_{i-np})$. If $i$ is not equal to $1$ mod $n+1$, the curve $\gamma'_{i}$ is one of the curves $f^{Nq}(\alpha_{j})$. More precisely, write the Euclidean division of $i$ by $n+1$: $i=l(i)(n+1)+r(i)$. If $r(i) >1$, then $\gamma'_{i}=f^{Nq}(\alpha_{j})$, with $j=l(i)n+r(i)-1$. If $r(i)=0$, then $\gamma'_{i}=f^{Nq}(\alpha_{j})$, with $j=l(i)n$. We now build the curve $\gamma'_{1}$.

Notice that, for any integers $-N \leq k,k' \leq N$, $f^{(N+k)q}(\alpha_{0}) \cap f^{(N+k')q}(\alpha_{1})= \emptyset$. Indeed, recall that, by Property $P(2Nq,n)$, the curves of the form $f^{lq}(\alpha_{0})$, with $0 \leq l \leq 2N$, lie strictly between the curves $\alpha_{-1}$ and $\alpha_{1}$. Likewise, the curves of the form $f^{lq}(\alpha_{1})$, with $0 \leq l \leq 2N$, lie strictly between the curves $\alpha_{0}$ and $\alpha_{2}$. Moreover, the intersection $f^{(N+k)q}(\alpha_{0}) \cap f^{(N+k')q}(\alpha_{1})$ is equal to $f^{(N+k)q}(\alpha_{0}\cap f^{(k'-k)q}(\alpha_{1}))$ or $f^{(N+k')q}(f^{(k-k')q}(\alpha_{0}) \cap \alpha_{1})$ and, among the integers $k-k'$ and $k'-k$, one is nonnegative and smaller than or equal to $2N$. 

Hence there exists a simple curve $\gamma'_{1}: [0,1] \rightarrow \mathbb{A}$ such that:
\begin{enumerate}
\item $\gamma'_{1}(0) \in \mathbb{S}^{1} \times \left\{ 0 \right\}$, $\gamma'_{1}(1) \in \mathbb{S}^{1} \times \left\{ 1 \right\}$ and $\gamma'_{1}((0,1)) \subset \mathbb{S}^{1} \times (0,1)$.
\item For any integers $k,k' \in \llbracket -N,N \rrbracket$, the curve $\gamma'_{1}$ lies strictly between the curves $f^{(N+k)q}(\alpha_{0})=f^{kq}(\gamma'_{0})$ and $f^{(N+k')q}(\alpha_{1})=f^{k'q}(\gamma'_{2})$.
\end{enumerate} 

By the second property above, for any integer $k \in \llbracket 0,N \rrbracket$, the curve $f^{kq}(\gamma'_{1})$ lies strictly between the curves $\gamma'_{0}$ and $\gamma'_{2}$. Moreover, the curves of the form $f^{kq}(\gamma'_{0})$, with $0 \leq k \leq N$ lie strictly between the curves $\gamma'_{-1}$ and $\gamma'_{1}$ and the curves of the form $f^{kq}(\gamma'_{2})$, with $0 \leq k \leq N$ lie strictly between the curves $\gamma'_{1}$ and $\gamma'_{3}$. If $q=1$, we have proved that the finite sequence $(\gamma'_{i})_{ i \in \mathbb{Z}/(n+1)q \mathbb{Z}}$ satisfies $P(N,n+1)$.

Suppose now that $q \neq 1$. For any index $i \neq 1$ with $r(i)=1$, there exists a unique integer $j \in \llbracket 1,q-1 \rrbracket$ such that $i=1+j(n+1)p$. Set $\gamma'_{i}=f^{j}(\gamma'_{1})$. As $\gamma'_{i-1}=f^{j}(\gamma'_{0})$ and $\gamma'_{i+1}=f^{j}(\gamma'_{2})$ by induction hypothesis, it is easy to check that the finite sequence $(\gamma'_{i})_{i \in \mathbb{Z}/(n+1)q \mathbb{Z}}$ satisfies $P(Nq,n+1)$.
\end{proof}

\begin{proof} [Proof of Theorem \ref{conjugacyclassannulus}]
In the case where $\alpha$ is irrational, the theorem is Corollary 1.2 in \cite{BCLRP}. Suppose that $\alpha= \frac{p}{q}$, where $p$ and $q$ are integers with either $q=1$ and $p=0$ or $q >0$ and $0<p <q$. Fix large integers $N,N' >0$. Apply Proposition \ref{pavageverticalanneau} to the homeomorphism $f$ with $n=N$: this proposition provides curves $(\gamma_{i})_{ i \in \mathbb{Z}/Nq \mathbb{Z}}$. Consider a finite sequence $(\alpha_{j})_{j \in \llbracket 0,N' \rrbracket}$ of pairwise disjoint loops $\mathbb{S}^{1} \rightarrow \mathbb{A}$ such that:
\begin{enumerate}
\item For any $t \in \mathbb{S}^{1}$, $\alpha_{0}(t)=(t,0)$ and $\alpha_{N'}(t)=(t,1)$.
\item The loops $\alpha_{j}$ are homotopic to $\alpha_{0}$.
\item For any index $1 \leq j <N'$, the loops $\alpha_{j}$ and $f(\alpha_{j})$ lie strictly between the curves $\alpha_{j+1}$ and $\alpha_{j-1}$.
\item For any indices $i$ and $j$, the loop $\alpha_{i}$ meets the curve $\gamma_{j}$ in only one point.
\end{enumerate}

Such curves can be built by induction on $N'$.

\begin{figure}[ht]
\begin{center}
\includegraphics[scale=0.70]{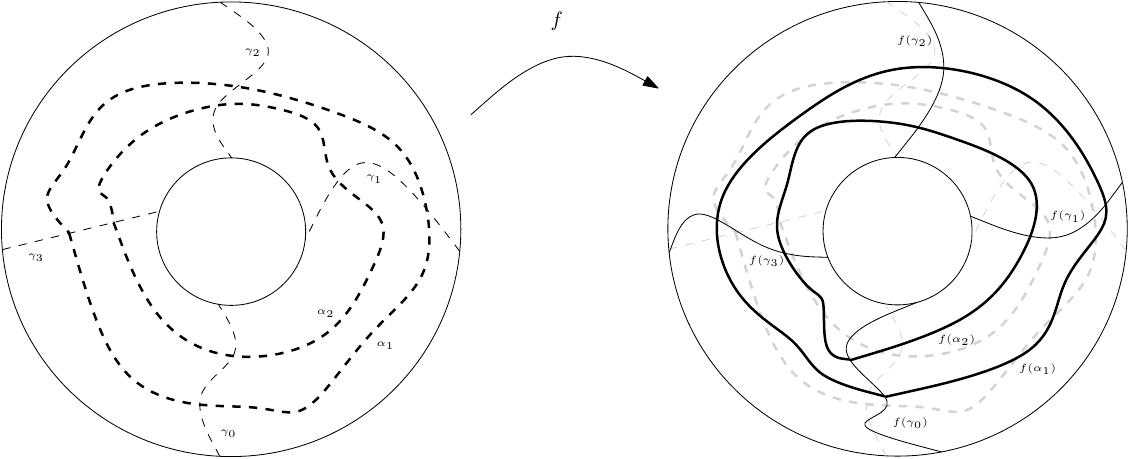}
\end{center}
\caption{Action of the homeomorphism $f$ on the curves $\gamma_{i}$ and $\alpha_{j}$ in the case $\alpha =0$, $N=4$ and $N'=3$}
\end{figure}

\begin{figure}[ht]
\begin{center}
\includegraphics[scale=0.5]{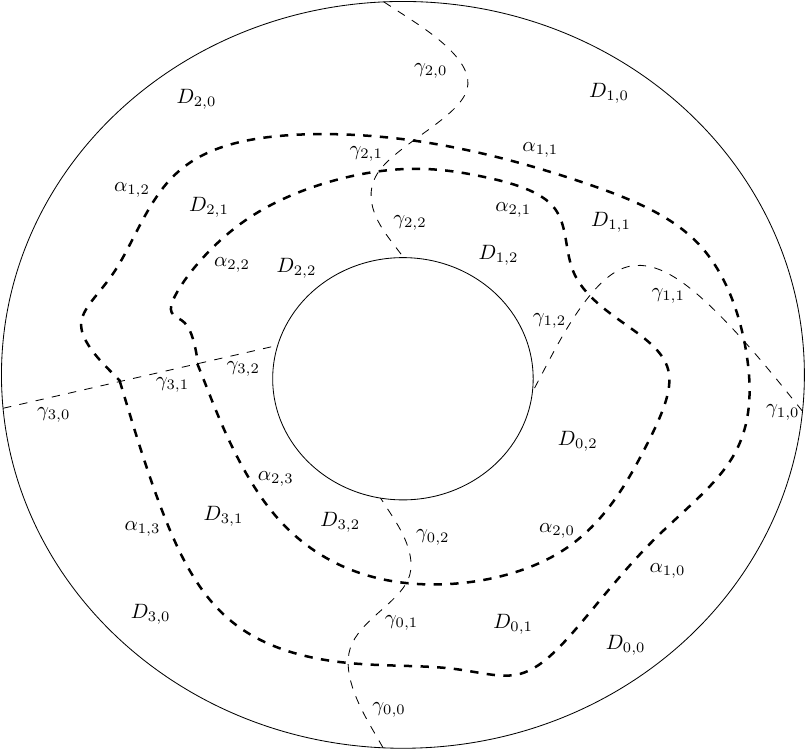}
\end{center}
\caption{Notation for the proof of the theorem in the case $\alpha =0$, $N=4$ and $N'=3$}
\label{notationstheorem}
\end{figure}

Let us introduce some notation (see Figure \ref{notationstheorem}). For any $i \in \mathbb{Z}/Nq \mathbb{Z}$, let $\gamma'_{i}$ be the curve $[0,1] \rightarrow \mathbb{A}$ defined by: $\gamma'_{i}(t)=(\frac{i}{Nq},t)$ and, for any $j \in \llbracket0, N' \rrbracket$, let $\alpha'_{j}$ be the loop $\mathbb{S}^{1} \rightarrow \mathbb{A}$ defined by $\alpha'_{j}(t)=(t,\frac{j}{N'})$. For $i$ in $\mathbb{Z}/Nq \mathbb{Z}$ and $j$ in $\llbracket 0, N' \rrbracket$, denote by $\alpha_{j,i}$ (respectively $\alpha'_{j,i}$) the closure of the connected component of $\alpha_{j}- \cup_{i'} \gamma_{i'}$ (respectively $\alpha'_{j}- \cup_{i'} \gamma'_{i'}$) which lies strictly between the curves $\gamma_{i}$ and $\gamma_{i+1}$ (respectively between the curves $\gamma'_{i}$ and $\gamma'_{i+1}$). Notice that, for any $j$, the loop $\alpha_{j}$ is the concatenation of the $\alpha_{j,i}$'s. Similarly, for any $i \in \mathbb{Z} / Nq \mathbb{Z}$ and any $0 \leq j \leq N'-1$, denote by $\gamma_{i,j}$ (respectively $\gamma'_{i,j}$) the closure of the connected component of $\gamma_{i}- \cup_{j'} \alpha_{j'}$ (respectively of $\gamma'_{i}- \cup_{j'} \alpha'_{j'}$) which lies strictly between the curves $\alpha_{j+1}$ and $\alpha_{j}$ (respectively between the curves $\alpha'_{j+1}$ and $\alpha'_{j}$). Finally, for $i \in \mathbb{Z} / Nq \mathbb{Z}$ and $0\leq j \leq N'-1$, we denote by $D_{i,j}$ (respectively $D'_{i,j}$) the topological closed disc whose boundary is the Jordan curve $\gamma_{i,j} \cup \alpha_{j,i} \cup \gamma_{i+1,j} \cup \alpha_{j+1,i}$ (respectively $\gamma'_{i,j} \cup \alpha'_{j,i} \cup \gamma'_{i+1,j} \cup \alpha'_{j+1,i}$). Note that $D'_{i,j}=[\frac{i}{Nq}, \frac{i+1}{Nq}] \times [\frac{j}{N'}, \frac{j+1}{N'}]$. The discs $D_{i,j}$ as well as $D'_{i,j}$ have pairwise disjoint interiors and cover the annulus $\mathbb{A}$. 

Consider a homeomorphism $h$ of the annulus which sends, for any $(i,j)$, the path $\gamma_{i,j}$ onto the path $\gamma'_{i,j}$ and the path $\alpha_{j,i}$ onto the path $\alpha'_{j,i}$. Such a homeomorphism exists thanks to the Schönflies theorem and sends each disk $D_{i,j}$ onto the corresponding disk $D'_{i,j}$.

By the properties of the curves $\alpha_{i}$ and $\gamma_{j}$, for any $(i,j)$, the loop $f(\partial D_{i,j})$ lies strictly between the curves $\alpha_{\min(j+2,N')}$ and $\alpha_{\max(j-1,0)}$. and strictly between the curves $\gamma_{i+Np-1}$ and $\gamma_{i+Np+2}$. Hence
$$f(\partial D_{i,j}) \subset \bigcup_{ \epsilon_{1}, \epsilon_{2} \in \left\{ -1,0,1 \right\}} D_{i+Np+\epsilon_{1}, j+ \epsilon_{2}},$$
where $D_{i,j}= \emptyset$ whenever $j \geq N'$ or $j <0$. Therefore
$$f( D_{i,j}) \subset \bigcup_{ \epsilon_{1}, \epsilon_{2} \in \left\{ -1,0,1 \right\}} D_{i+Np+\epsilon_{1}, j+ \epsilon_{2}}$$
and 
$$hfh^{-1}(D'_{i,j}) \subset \bigcup_{ \epsilon_{1}, \epsilon_{2} \in \left\{ -1,0,1 \right\}} D'_{i+Np+\epsilon_{1}, j+ \epsilon_{2}}.$$
Obviously
$$R_{\frac{p}{q}}(D'_{i,j})= D'_{i+Np,j} \subset \bigcup_{ \epsilon_{1}, \epsilon_{2} \in \left\{ -1,0,1 \right\}} D'_{i+Np+\epsilon_{1}, j+ \epsilon_{2}}.$$
We deduce that the uniform distance between the rotation $R_{\frac{p}{q}}$ and $hfh^{-1}$ is bounded by the supremum of the diameters of the sets  
$$\bigcup_{ \epsilon_{1}, \epsilon_{2} \in \left\{ -1,0,1 \right\}} D'_{i+Np+\epsilon_{1}, j+ \epsilon_{2}}=[\frac{i-1}{Nq}+\frac{p}{q}, \frac{i+2}{Nq}+\frac{p}{q}] \times [\frac{\max(j-1,0)}{N'}, \frac{\min(j+2,N')}{N'}].$$
This last quantity is arbitrarily small as soon as the integers $N$ and $N'$ are sufficiently large.
\end{proof}

\section{Conjugacy classes: general case} \label{sectgeneral}

In this section, we prove Theorem \ref{conjugacyclassboundary}.

We call \emph{essential arc} of the surface $S$ a simple curve $\gamma: [0,1] \rightarrow S$ up to positive reparametrization, whose endpoints lie on $\partial S$, which is not homotopic with fixed extremities to a curve contained in $\partial S$ and such that $\gamma((0,1)) \subset S- \partial S$. For any essential arc $\gamma$, by abuse of notation, we also denote by $\gamma$ the set $\gamma([0,1])$. In the case where the set $S- \gamma$ has two connected components, as the curve $\gamma$ is oriented, it makes sense to say that one of them, $C$, is on the right of $\gamma$ and the other one, $C'$, is on the left of $\gamma$. In this case, a subset $A$ of $S$ is said to lie on the right (respectively strictly on the right, on the left, strictly on the left) of the arc $\gamma$ if $A$ is contained in the closure of $C$ (respectively in $C$, in the closure of $C'$, in $C'$).

In what follows, we fix a compact surface $S$ with $\partial S \neq \emptyset$ and which is different from the annulus, the Möbius strip or the disc.

 We call \emph{maximal family of essential arcs} of $S$ a finite family $(\alpha_{i})_{1 \leq i \leq n}$ of pairwise disjoint essential arcs of $S$ such that the surface $S-(\partial S \cup \bigcup_{i} \alpha_{i})$ is homeomorphic to an open disc (see Figure \ref{maximalfamily}). Observe that the cardinality of a maximal family of essential arcs of $S$ is $1- \chi (S)$. Any family of pairwise disjoint and pairwise non homotopic (relative to $\partial S$) essential arcs whose cardinality is equal to $1 - \chi(S)$ and whose complement in $S$ is connected is a maximal family of essential arcs.

\begin{figure}[ht]
\begin{center}
\includegraphics{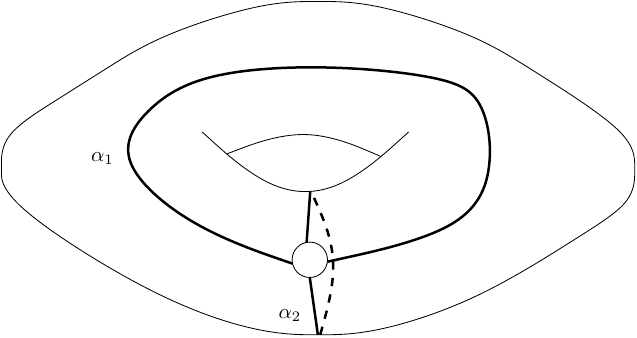}
\end{center}
\caption{A maximal family of essential arcs in the case of the punctured torus.}
\label{maximalfamily}
\end{figure}

Given such a family of essential arcs, the inclusion $i: S-(\partial S \cup \bigcup_{i} \alpha_{i}) \rightarrow S$ lifts to a map $\tilde{i}: S-(\partial S \cup \bigcup_{i} \alpha_{i}) \rightarrow \tilde{S}$. The closure of the range of such a map is a fundamental domain for the action of $\pi_{1}(S)$. We call it a \emph{fundamental domain associated to the family} $(\alpha_{i})_{1 \leq i\leq n}$. Observe that two fundamental domains associated to such a family differ by an automorphism in $\pi_{1}(S)$. Given any family $(\alpha_{i})_{1 \leq i \leq n}$ of pairwise disjoint and pairwise non homotopic (relative to $\partial S$) essential arcs such that $S- \cup_{i} \alpha_{i}$ is connected, we call fundamental domain associated to $(\alpha_{i})_{1\leq i \leq n}$ a fundamental domain associated to any maximal family of essential arcs which contains $(\alpha_{i})_{1 \leq i \leq n}$.

Fix a fundamental domain $D_{0}$ associated to some maximal family of essential arcs $(\alpha_{i,0})_{1 \leq i\leq l}$ of $S$.

\begin{proposition} \label{disjointcurvesboundary}
Let $N \geq 1$ be an integer. There exists a maximal family $(\alpha_{i})_{1 \leq i \leq l}$ of essential arcs such that the following properties hold.
\begin{enumerate}
\item Given two distinct arcs $\tilde{\alpha}, \tilde{\beta}: [0,1] \rightarrow \tilde{S}$, each of which is a lift of one of the arcs $\alpha_{i}$,  we have
$$ \forall -N \leq k \leq N, \tilde{f}^{k}(\tilde{\alpha}) \cap \tilde{\beta}= \emptyset.$$
\item There exists a fundamental domain $D$ associated to the family $(\alpha_{i})_{1 \leq i \leq l}$ and a homeomorphism $h_{0} \in \mathrm{Homeo}_{0}(S)$ such that $\tilde{h}_{0}(D)=D_{0}$.
\end{enumerate}
\end{proposition}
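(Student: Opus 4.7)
The plan is to construct $(\alpha_{i})$ as the image of the original family $(\alpha_{i,0})$ under $h_{0}^{-1}$ for a carefully chosen $h_{0} \in \mathrm{Homeo}_{0}(S)$, taking $D := \tilde{h}_{0}^{-1}(D_{0})$. Condition~2 of the proposition is then immediate. Since $\tilde{h}_{0}$ commutes with every deck transformation, condition~1 becomes equivalent to asking that the conjugate $\tilde{g} := \tilde{h}_{0} \tilde{f} \tilde{h}_{0}^{-1}$ satisfy $\tilde{g}^{k}(\tilde{\alpha}_{i,0}) \cap \gamma \tilde{\alpha}_{j,0} = \emptyset$ for every pair $(i,j)$, every $|k| \leq N$ and every deck transformation $\gamma$ with $\gamma \tilde{\alpha}_{j,0} \neq \tilde{\alpha}_{i,0}$.

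The first step is to reduce to a finite combinatorial problem. For the fixed integer $N$ the union $\bigcup_{|k| \leq N} \tilde{f}^{k}(D_{0})$ has finite diameter $R$, so every iterate $\tilde{f}^{k}(\tilde{\alpha}_{i,0})$ sits in a bounded neighbourhood of $\tilde{\alpha}_{i,0}$. Since the action of $\pi_{1}(S)$ on $\tilde{S}$ is properly discontinuous, only finitely many deck translates $\gamma \tilde{\alpha}_{j,0}$ can possibly meet such iterates. By equivariance, the data to kill is a finite list of forbidden crossings.

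The second step is the isotopy construction. For each forbidden crossing between $\tilde{f}^{k}(\tilde{\alpha}_{i,0})$ and a translate $\gamma \tilde{\alpha}_{j,0}$, one performs an elementary isotopy of $S$ supported in a small disc near the projection of the crossing, sliding $\alpha_{i,0}$ off the offending arc (either by bigon reduction in the interior or by endpoint-sliding along $\partial S$). Composing these finitely many local isotopies yields an ambient isotopy of $S$ whose time-one map is the candidate $h_{0}$. Maximality of the resulting family $(h_{0}^{-1}(\alpha_{i,0}))$ is preserved because $h_{0} \in \mathrm{Homeo}_{0}(S)$ respects homotopy classes of essential arcs and sends connected complements to connected complements.

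The main difficulty I expect is the self-referential character of the condition: isotoping $\alpha_{i,0}$ by $h_{0}^{-1}$ simultaneously modifies $\tilde{g}^{k}(\tilde{\alpha}_{i,0}) = \tilde{h}_{0} \tilde{f}^{k} \tilde{h}_{0}^{-1}(\tilde{\alpha}_{i,0})$, so a naive one-pass elimination may reintroduce crossings just removed elsewhere. I would address this by a global taut-position argument applied to the finite combined family $\{\tilde{f}^{k}(\tilde{\alpha}_{i,0}) : 1 \leq i \leq l,\ |k| \leq N\}$ on the surface $S$ (which has negative Euler characteristic by the exclusions on $S$). Picking $h_{0}$ among all candidates in its isotopy class so as to minimize the total geometric intersection between the $\alpha_{i,0}$ and this finite family, an innermost-bigon argument shows the minimum is attained at zero, which yields the required $h_{0}$.
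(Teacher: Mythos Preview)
Your proposal has a genuine gap: it never invokes the non-spreading hypothesis on $f$, yet the proposition is false for general $f\in\mathrm{Homeo}_{0}(S)$. If the statement held for every $f$ and every $N$, the proof of Theorem~\ref{conjugacyclassboundary} (which uses nothing about $f$ beyond the conclusion of this proposition) would show that every $f\in\mathrm{Homeo}_{0}(S)$ has conjugates arbitrarily close to the identity; by Theorem~\ref{distcont} every such $f$ would then be distorted, contradicting Proposition~\ref{distorsionimpliquecroissance} applied to any spreading homeomorphism. So the hypothesis is essential, and any valid proof must use it somewhere.

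The place where your argument actually breaks is the one you flag yourself. In the minimization you propose, both families of curves depend on the variable $h_{0}$: sliding $\alpha_{j}$ across a bigon with $\tilde f^{k}(\tilde\alpha_{i})$ simultaneously moves every $\tilde f^{k'}(\tilde\alpha_{j})$, and nothing prevents those displaced arcs from acquiring new intersections. Hence the total intersection count is not monotone under bigon removal, and the minimum (even if it exists) need not be zero; the standard innermost-bigon lemma applies only when one of the two families is held fixed. The paper's proof avoids this circularity by a completely different route: it builds the arcs $\alpha_{1},\dots,\alpha_{l}$ one at a time, strengthening the inductive hypothesis so that the disjointness $\gamma\tilde f^{k}(\tilde\alpha_{i})\cap\tilde\alpha_{j}=\emptyset$ is required for all $|k|\le N(l_{\mathcal G}(\gamma)+1)$, a range that grows with the word length of $\gamma$. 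The new arc $\tilde\alpha_{b+1}$ is then constructed explicitly as an infimum (Lemma~\ref{inf}) over a family of translates $\gamma\tilde f^{k}(\cdot)$ of the previously built arcs, and the non-spreading hypothesis is precisely what ensures (Lemma~\ref{finitenumber}) that only finitely many such translates meet the region of interest, so that the infimum is taken over a finite set.
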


With this proposition, we are able to prove Theorem \ref{conjugacyclassboundary}. 

\begin{proof}[Proof of Theorem \ref{conjugacyclassboundary}]
Fix $\epsilon >0$. We will construct a homeomorphism $h$ in $\mathrm{Homeo}_{0}(S)$ such that $d(hfh^{-1},Id)= \sup_{x \in S}d(hfh^{-1}(x),x) \leq \epsilon$. Let $\varphi : D_{0} \rightarrow [0,1]^{2}$ be a homeomorphism such that the image under $\varphi$ of any essential arc contained in $\partial D_{0}$ is contained either in $[0,1] \times \left\{ 0 \right\}$ or in $[0,1] \times \left\{ 1 \right\}$ and the image under $\varphi^{-1}$ of any of the four corners of the square $[0,1] \times [0,1]$ is an endpoint of an essential arc contained in $\partial D_{0}$. Moreover, we impose that, for any $t \in [0,1]$, the points $\varphi^{-1}(t,0)$ and $\varphi^{-1}(t,1)$ do not belong to (necessarily different) lifts of the same essential arc $\alpha_{i,0}$.

Choose $L>0$ sufficiently large such that, for any $0 \leq i,j \leq L$, $\mathrm{diam}(\varphi^{-1}([i/(L+1),(i+1)/(L+1)] \times [j/(L+1),(j+1)/(L+1)])) \leq \epsilon/2$. For any $0 \leq i \leq L+1$, denote by $\tilde{\beta}_{i,0}$ (respectively $\tilde{\delta}_{i,0}$) the curve $\varphi^{-1}([0,1] \times \left\{ i/(L+1)  \right\})$ (respectively the curve $\varphi^{-1}( \left\{ i/(L+1) \right\} \times [0,1])$) oriented from the point $\varphi^{-1}(0, i/(L+1))$ to the point $\varphi^{-1}(1, i/(L+1))$ (respectively from the point $\varphi^{-1}(i/(L+1),0)$ to the point $\varphi^{-1}(i/(L+1),1)$). See Figure \ref{notationsD0}.

\begin{figure}[ht]
\begin{center}
\includegraphics{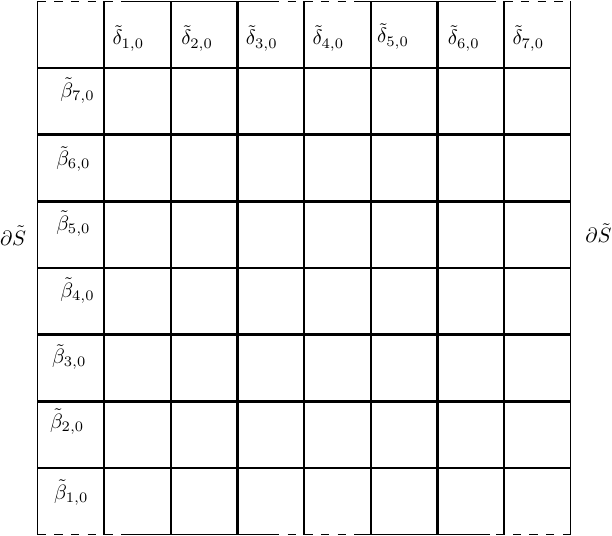}
\end{center}
\caption{Notation for the proof of Theorem \ref{conjugacyclassboundary}}
\label{notationsD0}
\end{figure}

Now, apply Proposition \ref{disjointcurvesboundary} with $N=2^{2L+1}$. We use notation from this proposition in what follows. Conjugating the homeomorphism $f$ by $h_{0}$, we can suppose that $D=D_{0}$.

Let $\tilde{\beta}_{0}= \tilde{\beta}_{0,0}$, $\tilde{\beta}_{L+1}=\tilde{\beta}_{L+1,0}$, $\tilde{\delta}_{0}=\tilde{\delta}_{0,0}$ and $\tilde{\delta}_{L+1}=\tilde{\delta}_{L+1,0}$. We will construct arcs $(\tilde{\beta}_{i})_{1 \leq i \leq  L}$ and $(\tilde{\delta}_{i})_{1 \leq i \leq L}$ such that the following properties hold.
\begin{enumerate}
\item There exists a homeomorphism $h$ in $\mathrm{Homeo}_{0}(S)$ such that $\tilde{h}(D_{0})=D_{0}$, $\tilde{h}(\tilde{\beta}_{i})= \tilde{\beta}_{i,0}$, $\tilde{h}(\tilde{\delta}_{i})= \tilde{\delta}_{i,0}$.
\item For any $1 \leq i \leq L$, the image under $\tilde{f}$ of the arc $\tilde{\beta}_{i}$ meets neither the curve $\tilde{\beta}_{i-1}$ nor the curve $\tilde{\beta}_{i+1}$.
For any $1 \leq i \leq L$, the image under $\tilde{f}$ of the arc $\tilde{\delta}_{i}$ meets neither the curve $\tilde{\delta}_{i-1}$ nor the curve $\tilde{\delta}_{i+1}$.
\item Take any essential arc $\tilde{\alpha}$ contained in $\partial D_{0}$. The image under $\tilde{f}$ or $\tilde{f}^{-1}$ of this essential arc does not meet any of the curves $\tilde{\beta}_{i}$, and any of the curves $\tilde{\delta}_{i}$ which satisfy $\tilde{\delta}_{i} \cap \tilde{\alpha}= \emptyset$.
\item Consider any essential arc $\tilde{\alpha}:[0,1] \rightarrow \tilde{S}$ contained in $\partial D_{0}$. Denote by $\gamma$ the deck transformation such that $D_{0} \cap \gamma D_{0}= \tilde{\alpha}$. Finally, let $\left\{\tilde{\alpha}(t_{i}), 1 \leq i \leq r \right\}$, be the set of points of $\tilde{\alpha}$ which belong to one of the curves $\tilde{\delta}_{i}$ or $\gamma(\tilde{\delta}_{j})$, where $t_{1} < t_{2}< \ldots <t_{r}$. Let $t_{0}=0$ and $t_{r+1}=1$. Then the following properties are satisfied.
\begin{itemize}
\item For any $l\leq r$ the image under $\tilde{f}$ of the arc $\tilde{\alpha}([t_{l},t_{l+1}])$ does not meet any of the following arcs: $\tilde{\alpha}([0,t_{l-1}])$ if $l>0$, $\tilde{\alpha}([t_{l+2},1])$ if $l<r$, the curves of the form $\tilde{\delta}_{i}$ if $\tilde{\delta}_{i} \cap \tilde{\alpha}([t_{l},t_{l+1}])= \emptyset$ and the curves of the form $\gamma(\tilde{\delta}_{j})$ if $\gamma(\tilde{\delta}_{j}) \cap \tilde{\alpha}([t_{l},t_{l+1}])= \emptyset$. 
\item For any index $i$ such that $\tilde{\alpha} \cap \tilde{\delta}_{i} \neq \emptyset$, denoting by $\tilde{\alpha}(t_{l(i)})$ the point $\tilde{\alpha} \cap \tilde{\delta}_{i}$, the image under $\tilde{f}$ of the arc $\tilde{\delta}_{i}$ does not meet any of the following arcs:  $\tilde{\alpha}([0,t_{l(i)-1}])$ if $l(i)>0$, $\tilde{\alpha}([t_{l(i)+1},1])$ if $l(i)<r+1$ and the curves of the form $\gamma(\tilde{\delta}_{j})$ if $\tilde{\delta}_{i} \cap \gamma(\tilde{\delta}_{j})= \emptyset$.
\end{itemize}
\end{enumerate}

We claim that in this case $d(hfh^{-1}, Id) \leq \epsilon$, which completes the proof of Theorem \ref{conjugacyclassboundary}.  First, let us check this claim before building the curves $\tilde{\beta}_{i}$ and $\tilde{\delta}_{i}$. In what follows, we will call square any subset of $S$ of the form $\pi(\varphi^{-1}([i/(L+1),(i+1)/(L+1)] \times [j/(L+1),(j+1)/(L+1)]))$. By the properties above, for any $0 \leq i \leq L$ and any $0 \leq j \leq L$, the image under $\tilde{h}\tilde{f}\tilde{h}^{-1}$ of any point in the square $\pi(\varphi^{-1}([i/(L+1),(i+1)/(L+1)] \times [j/(L+1),(j+1)/(L+1)]))$, which is the projection of the square delimited by $\pi(\tilde{\delta}_{i,0})$, $\pi(\tilde{\delta}_{i+1,0})$, $\pi(\tilde{\beta}_{j,0})$ and $\pi(\tilde{\beta}_{j+1,0})$, is contained in squares which meet the square $\varphi^{-1}([i/(L+1),(i+1)/(L+1)] \times [j/(L+1),(j+1)/(L+1)])$. Indeed, this is a consequence of the first three conditions above for any square which does not touch $\partial D_{0}$ (see Figure \ref{carreinterieur}) and the fourth condition ensures that this property also holds for squares which meet $\partial D_{0}$ (see Figure \ref{carreexterieur}). Any point in the union of such squares is at distance at most $\epsilon$ from any point of the square $C$, which proves the claim.

\begin{figure}[ht]
\begin{center}
\includegraphics{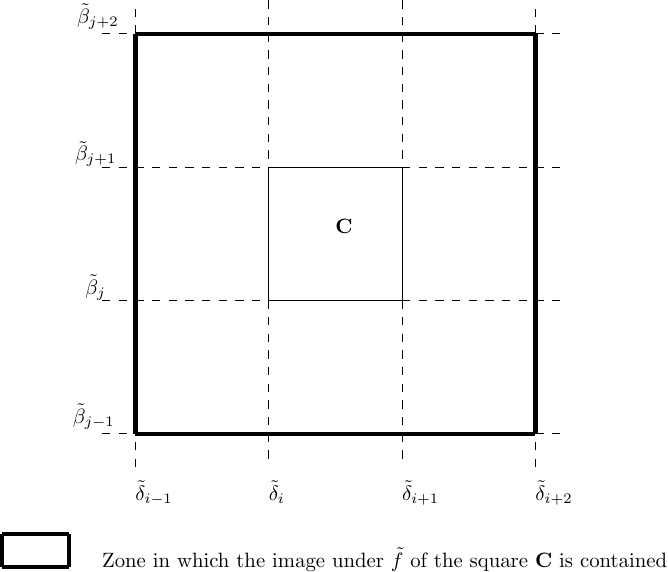}
\end{center}
\caption{The image of a square which does not meet $\partial D_{0}$}
\label{carreinterieur}
\end{figure}

\begin{figure}[ht]
\begin{center}
\includegraphics{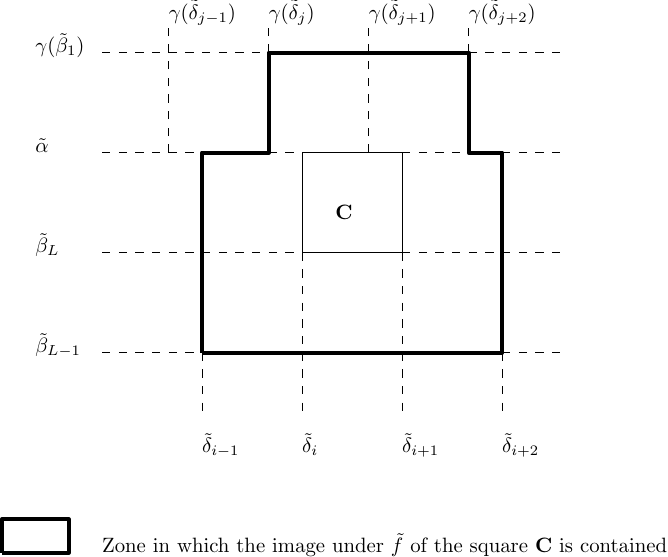}
\end{center}
\caption{The image of a square which meets $\partial D_{0}$}
\label{carreexterieur}
\end{figure}

Now let us construct the arcs $\tilde{\beta}_{i}$ and $\tilde{\delta}_{i}$. We will first build the curves $\tilde{\beta}_{i}$ by induction on $i$. More precisely, we build by induction on $1 \leq i \leq L$ a curve $\tilde{\beta}_{i}: [0,1] \rightarrow D_{0}$ such that the following properties are satisfied.
\begin{enumerate}
\item $\tilde{\beta}_{i}(0) \in \varphi^{-1}(\left\{ 0 \right\} \times [0,1])$, $\tilde{\beta}_{i}(1) \in \varphi^{-1}(\left\{ 1 \right\} \times [0,1])$ and $\tilde{\beta}_{i}((0,1)) \subset D_{0}-\partial D_{0}$.
\item The arc $\tilde{\beta}_{i}$ lies strictly on the left of the arc $\tilde{\beta}_{i-1}$ if $i>1$.
\item For any $1 \leq j < i$ and any $-2^{2L-i}\leq k,k' \leq 2^{2L-i}$ the arcs $\tilde{f}^{k}(\tilde{\beta}_{j})$ and $\tilde{f}^{k'}(\tilde{\beta}_{i})$ are disjoint.
\item For any essential arc $\tilde{\alpha}$ contained in $\partial D_{0}$ and any $-2^{2L-i}\leq k,k' \leq 2^{2L-i}$, we have $\tilde{f}^{k}(\tilde{\alpha}) \cap \tilde{f}^{k'}(\tilde{\beta}_{i})=\emptyset.$
\end{enumerate}

Recall that, by Proposition \ref{disjointcurvesboundary}, for any $-2^{2L}=-N/2 \leq k,k' \leq N/2=2^{2L}$ and any essential arcs $\tilde{\alpha} \neq \tilde{\alpha}'$ contained in $\partial D_{0}$,
$$ \tilde{f}^{k}(\tilde{\alpha}) \cap \tilde{f}^{k'}(\tilde{\alpha}')= \emptyset.$$
Hence there exists an essential arc $\tilde{\beta}_{1}:[0,1] \rightarrow D_{0}$ with the following properties.
\begin{enumerate}
\item The point $\tilde{\beta}_{1}(0)$ belongs to $\varphi^{-1}(\left\{0 \right\} \times [0,1]) \subset \partial \tilde{S}$ and the point $\tilde{\beta}_{1}(1)$ belongs to $\varphi^{-1}(\left\{1 \right\} \times [0,1]) \subset \partial \tilde{S}$.
\item $\tilde{\beta}_{1}((0,1)) \subset D_{0}- \partial D_{0}$.
\item The arc $\tilde{\beta}_{1}$ is disjoint from any of the arcs of the form $\tilde{f}^{k}(\tilde{\alpha})$, where $\tilde{\alpha}$ is any essential arc contained in $\partial D_{0}$ and $-2^{2L} \leq k \leq 2^{2L}$.  
\end{enumerate}
Observe that, for any essential arc $\tilde{\alpha}$ contained in $\partial D_{0}$ and any $-2^{2L-1}\leq k,k' \leq 2^{2L-1}$, we have $\tilde{f}^{k}(\tilde{\alpha}) \cap \tilde{f}^{k'}(\tilde{\beta}_{1})=\tilde{f}^{k'}(\tilde{f}^{k-k'}(\tilde{\alpha}) \cap \tilde{\beta}_{1})=\emptyset.$

Now suppose that we have constructed essential arcs $\tilde{\beta}_{1}, \ldots, \tilde{\beta}_{i}$ with the above properties, for some $i <L$.

We now build the arc $\tilde{\beta}_{i+1}$. By the second above property, for any essential arc $\tilde{\alpha}$ contained in $\varphi^{-1}([0,1]\times \left\{ 1 \right\})$ and any $-2^{2L-i} \leq k, k' \leq 2^{2L-i}$, the arcs $\tilde{f}^{k}(\tilde{\beta}_{i})$ and $\tilde{f}^{k'}(\tilde{\alpha})$ are disjoint. Hence there exists an essential arc $\tilde{\beta}_{i+1}: [0,1] \rightarrow D_{0}$ with the following properties.
\begin{enumerate}
\item The point $\tilde{\beta}_{i+1}(0)$ belongs to $\varphi^{-1}(\left\{0 \right\} \times [0,1]) \subset \partial \tilde{S}$ and the point $\tilde{\beta}_{i+1}(1)$ belongs to $\varphi^{-1}(\left\{1 \right\} \times [0,1]) \subset \partial \tilde{S}$.
\item $\tilde{\beta}_{i+1}((0,1)) \subset D_{0}- \partial D_{0}$.
\item The arc $\tilde{\beta}_{i+1}$ is disjoint from any of the arcs of the form $\tilde{f}^{k}(\tilde{\alpha})$, where $\tilde{\alpha}$ is any essential arc contained in $\varphi^{-1}([0,1]\times \left\{ 1 \right\})$ and $-2^{2L-i} \leq k \leq 2^{2L-i}$.
\item The arc $\tilde{\beta}_{i+1}$ is strictly on the left of any of the arcs of the form $\tilde{f}^{k}(\tilde{\beta}_{i})$, where $-2^{2L-i} \leq k \leq 2^{2L-i}$.
\end{enumerate}
It is easy to check that the arc $\tilde{\beta}_{i+1}$ satisfies the required properties.

Now, it remains to build the curves $\tilde{\delta}_{i}$ with $1 \leq i \leq L$. We build by induction on $i$ a curve $\tilde{\delta}_{i}:[0,1] \rightarrow D_{0}$ such that the following properties are satisfied.
\begin{enumerate}
\item If the point $\tilde{\delta}_{i,0}(0)$ (respectively $\tilde{\delta}_{i,0}(1)$) belongs to an essential arc $\tilde{\alpha}:[0,1] \rightarrow \partial D_{0}$, the following properties hold. Denote by $\gamma$ the deck transformation such that $D_{0} \cap \gamma(D_{0})= \tilde{\alpha}$. The point $\tilde{\delta}_{i}(0)$ (respectively $\tilde{\delta}_{i}(1)$) belongs to the same essential arc contained in $\partial D_{0}$ as the point $\tilde{\delta}_{i,0}(0)$ (respectively $\tilde{\delta}_{i,0}(1)$). Moreover, the family consisting of the points $(\gamma \tilde{\delta}_{k,0} \cap \tilde{\alpha})_{k}$, where $k \leq i$ varies over the indices such that $\gamma \tilde{\delta}_{k,0} \cap \tilde{\alpha} \neq \emptyset$, and the point $\tilde{\delta}_{i,0}(0)$ (respectively $\tilde{\delta}_{i,0}(1)$)  are in the same order on $\tilde{\alpha}$ as the family consisting of the points $(\gamma \tilde{\delta}_{k} \cap \tilde{\alpha})_{k}$ and the point $\tilde{\delta}_{i}(0)$ (respectively $\tilde{\delta}_{i}(1)$).
\item If the point $\tilde{\delta}_{i,0}(0)$  (respectively $\tilde{\delta}_{i,0}(1)$) does not belong to one of these essential arcs, then the point $\tilde{\delta}_{i}(0)$ (respectively $\tilde{\delta}_{i}(1)$) belongs to the same component of $\varphi^{-1}([0,1] \times \left\{ 0 \right\}) \cap \partial \tilde{S}$ (respectively $\varphi^{-1}([0,1] \times \left\{ 1 \right\}) \cap \partial \tilde{S}$) as the point $\tilde{\delta}_{i,0}(0)$ (respectively $\tilde{\delta}_{i,0}(1)$).
\item  $\tilde{\delta}_{i}((0,1)) \subset D_{0}- \partial D_{0}$.
\item If $i>1$, the curve $\tilde{\delta}_{i}$ is strictly on the right of the curve $\tilde{\delta}_{i-1}$ in $D_{0}$.
\item For any $1 \leq j \leq L$, the curve $\tilde{\delta}_{i}$ meets the curve $\tilde{\beta}_{j}$ in only one point.
\item For any $-2^{L-i}\leq k,k' \leq 2^{L-i}$ and any $0 \leq j  \leq i$, the curves $\tilde{f}^{k}(\tilde{\delta}_{j})$ and $\tilde{f}^{k'}(\tilde{\delta}_{i})$ are disjoint.
\item For any $-2^{L-i} \leq k,k' \leq 2^{L-i}$ and any essential arc $\tilde{\alpha}$ contained in $\partial D_{0}$ which does not meet the curve $\tilde{\delta}_{i,0}$, the curves $\tilde{f}^{k}(\tilde{\delta}_{i})$ and $\tilde{f}^{k'}(\tilde{\alpha})$ are disjoint.
\item Consider any essential arc $\tilde{\alpha}:[0,1] \rightarrow \tilde{S}$ contained in $\partial D_{0}$ such that $\tilde{\alpha} \cap \tilde{\delta}_{i} \neq \emptyset$. Denote by $\gamma$ the deck transformation such that $D_{0} \cap \gamma D_{0}= \tilde{\alpha}$. Let $\left\{\tilde{\alpha}(t_{l}), 1 \leq l \leq r \right\}$, be the set of points of $\tilde{\alpha}$ which belong to one of the curves $\tilde{\delta}_{j}$ or $\gamma(\tilde{\delta}_{j'})$, where $1 \leq j,j' \leq i$ and $t_{1} < t_{2}< \ldots <t_{r}$. Let $t_{0}=0$ and $t_{r+1}=1$. Finally, let $\tilde{\alpha}(t_{l(i)})$ be the point $\tilde{\alpha} \cap \tilde{\delta}_{i}$. Then, for any $-2^{L-i} \leq k,k' \leq 2^{L-i}$, the following properties are satisfied.
\begin{itemize}
\item The image under $\tilde{f}^{k}$ of the arc $\tilde{\alpha}([0,t_{l(i)}])$ does not meet any of the following arcs: $\tilde{f}^{k'}(\tilde{\alpha}([t_{l(i)+1},1]))$, the curves of the form $\tilde{f}^{k'}(\tilde{\delta}_{j})$ if $j \leq i$ and $\tilde{\delta}_{j} \cap \tilde{\alpha}([0,t_{l(i)}])= \emptyset$ and the curves of the form $\tilde{f}^{k'}(\gamma(\tilde{\delta}_{j}))$ if $j \leq i$ and $\gamma(\tilde{\delta}_{j}) \cap \tilde{\alpha}([0,t_{l(i)}])= \emptyset$. Likewise, the image under $\tilde{f}^{k}$ of the arc $\tilde{\alpha}([t_{l(i)},1])$ does not meet any of the following arcs: $\tilde{f}^{k'}(\tilde{\alpha}([0,t_{l(i)-1}]))$, the curves of the form $\tilde{f}^{k'}(\tilde{\delta}_{j})$ if $j \leq i$ and $\tilde{\delta}_{j} \cap \tilde{\alpha}([t_{l(i)},1])= \emptyset$ and the curves of the form $\tilde{f}^{k'}(\gamma(\tilde{\delta}_{j}))$ if $j \leq i$ and $\gamma(\tilde{\delta}_{j}) \cap \tilde{\alpha}([t_{l(i)},1])= \emptyset$. 
\item The image under $\tilde{f}^{k}$ of the arc $\tilde{\delta}_{i}$ does not meet any of the following arcs:  $\tilde{f}^{k'}(\tilde{\alpha}([0,t_{l(i)-1}]))$ if $l(i)>0$, $\tilde{f}^{k'}(\tilde{\alpha}([t_{l(i)+1},1]))$ if $l(i)<r+1$ and the curves of the form $\tilde{f}^{k'}(\gamma(\tilde{\delta}_{j}))$ if $\tilde{\delta}_{i} \cap \gamma(\tilde{\delta}_{j})= \emptyset$, where $0 \leq j\leq i$. 
\end{itemize}
\end{enumerate}

Before completing the induction, let us why the curves $\tilde{\beta}_{i}$ and $\tilde{\delta}_{i}$ satisfy the required properties. From the properties satisfied by the curves $\tilde{\delta}_{i}$ and the curves $\tilde{\beta}_{i}$, we deduce that there exists a homeomorphism $\tilde{h}:D_{0} \rightarrow D_{0}$ with the following properties.
\begin{enumerate}
\item For any $1 \leq i \leq L$, $\tilde{h}(\tilde{\delta}_{i})=\tilde{\delta}_{i,0}$ and $\tilde{h}(\tilde{\beta}_{i})=\tilde{\beta}_{i,0}$.
\item The homeomorphism $\tilde{h}$ preserves any essential arc contained in $\partial D_{0}$.
\item For any two essential arcs $\tilde{\alpha}$ and $\tilde{\alpha}'$ contained in $\partial D_{0}$, if there exists a deck transformation $\gamma$ such that $\gamma(\tilde{\alpha}) = \tilde{\alpha}'$, then $\gamma \tilde{h}_{| \tilde{\alpha}}=\tilde{h}\gamma_{|\tilde{\alpha}}$. 
\end{enumerate}
The second and the third conditions above imply that the homeomorphism $\tilde{h}$ can be extended on $\tilde{S}$ as the lift of some homeomorphism $h$ in $\mathrm{Homeo}_{0}(S)$.
To construct such a homeomorphism, first construct it on the union of $\partial D_{0}$ with the curves $\tilde{\delta}_{i}$ and $\tilde{\beta}_{i}$. Then extend this homeomorphism to the connected components of the complement of this set in $D_{0}$ by using the Schönflies theorem.

Now, let us build the curves $\tilde{\delta}_{i}$ by induction. Fix an index $1 \leq i \leq L$ and suppose that we have constructed arcs $\tilde{\delta}_{1}, \tilde{\delta}_{2},\ldots, \tilde{\delta}_{i-1}$ with the above properties (this condition is empty in the case $i=1$). Denote by $A_{i}$ the set of essential arcs contained in $\partial D_{0}$ which lie strictly on the right of the curve $\tilde{\delta}_{i,0}$ in $D_{0}$ and by $B_{i}$ the set of essential arcs contained in $\partial D_{0}$ which lie strictly on the left of the curve $\tilde{\delta}_{i,0}$ in $D_{0}$. We distinguish three cases.\\
\textbf{First case:} The points $\tilde{\delta}_{i,0}(0)$ and $\tilde{\delta}_{i,0}(1)$ do not belong to an essential arc contained in $\partial D_{0}$.\\
\textbf{Second case:} The point $\tilde{\delta}_{i,0}(0)$ belongs to an essential arc $\tilde{\alpha}$ contained in $\partial D_{0}$ and the point $\tilde{\delta}_{i,0}(1)$ belongs to an essential arc $\tilde{\alpha}'$ contained in $\partial D_{0}$.\\
\textbf{Third case:} One of the points among $\tilde{\delta}_{i,0}(0)$ and $\tilde{\delta}_{i,0}(1)$ belongs to an essential arc contained in $\partial D_{0}$ and the other one does not.\\
We construct the curve $\tilde{\delta}_{i}$ in the first two cases and we leave the construction in the third case to the reader.

Let us look at the first case. Notice that the following properties hold.
\begin{enumerate}
\item For any $-2^{L-i+1}\leq k,k' \leq 2^{L-i+1}$, any essential arc $\tilde{\alpha}$ in $A_{i}$ and any essential arc $\tilde{\alpha}'$ in $B_{i}$, the curves $\tilde{f}^{k}(\tilde{\alpha})$ and $\tilde{f}^{k'}(\tilde{\alpha}')$ are disjoint.
\item For any $-2^{L-i+1} \leq k,k' \leq 2^{L-i+1}$ and any essential arc $\tilde{\alpha}$ in $A_{i}$, the curves $\tilde{f}^{k}(\tilde{\alpha})$ and  $\tilde{f}^{k'}(\tilde{\delta}_{i-1})$ are disjoint.
\item For any $-2^{L-i+1} \leq k \leq 2^{L-i+1}$, the arc $\tilde{f}^{k}(\delta_{i-1})$ is disjoint from the set $\varphi^{-1}(\left\{ 1 \right\} \times [0,1])$: by construction of the chart $\varphi$, the endpoints of these curves do not belong to the connected component of $\partial \tilde{S}$ which contains $\varphi^{-1}(\left\{ 1 \right\} \times [0,1])$.
\item For any $-2^{L-i+1}\leq k \leq 2^{L-i+1}$ and any essential arc $\tilde{\alpha}$ in $A_{i}$, the curve $\tilde{f}^{k}(\tilde{\alpha})$ is disjoint from the arcs of the form $\tilde{\beta}_{j}$, with $1 \leq j \leq L$.
\end{enumerate}
Hence there exists an essential arc $\tilde{\delta}_{i}$ with the following properties:
\begin{itemize}
\item The point $\tilde{\delta}_{i}(0)$ (respectively $\tilde{\delta}_{i}(1)$) belongs to the same component of $\partial \tilde{S} \cap D_{0}$ as the point $\tilde{\delta}_{i,0}(0)$ (respectively $\tilde{\delta}_{i,0}(1)$) and does not meet any essential arc contained in $\partial D_{0}$.
\item $\tilde{\delta}_{i}((0,1)) \subset D_{0}-\partial D_{0}$.
\item The curve $\tilde{\delta}_{i}$ lies strictly on the right of the curves of the form $\tilde{f}^{k}(\tilde{\delta}_{i-1})$, with $-2^{L-i+1}\leq k \leq 2^{L-i+1}$, in $D_{0}$.
\item For any $1 \leq j \leq L$, the curve $\tilde{\delta}_{i}$ meets the curve $\tilde{\beta}_{j}$ in only one point.
\item The curves of the form $\tilde{f}^{k}(\tilde{\alpha})$, where $\tilde{\alpha}$ belongs to $A_{i}$ and $-2^{L-i+1}\leq k \leq 2^{L-i+1}$, lie strictly on the right of the curve $\tilde{\delta}_{i}$ and the curves of the form $\tilde{f}^{k}(\tilde{\alpha})$, where $\tilde{\alpha}$ belongs to $B_{i}$ and $-2^{L-i+1}\leq k \leq 2^{L-i+1}$, lie strictly on the left of the curve $\tilde{\delta}_{i}$.
\end{itemize}
Of course, the curves of the form $\tilde{f}^{k}(\tilde{\delta}_{j})$, with $j<i-1$ and $-2^{L-i+1}\leq k \leq 2^{L-i+1}$, lie strictly on the left of the curve $\tilde{f}^{k}(\tilde{\delta}_{i-1})$ and hence are disjoint from the curve $\tilde{\delta}_{i}$. The curve $\tilde{\delta}_{i}$ satisfies the required properties.

The second case is subdivided into three subcases.\\
\textbf{First subcase:} The arcs of the form $\gamma(\tilde{\delta}_{j,0})$, where $\gamma$ is a nontrivial deck transformation and $1 \leq j \leq i-1$, meet neither the arc $\tilde{\alpha}$ nor the arc $\tilde{\alpha}'$.\\
\textbf{Second subcase:} The arcs $\tilde{\alpha}$ and $\tilde{\alpha}'$ both meet an arc of the form $\gamma(\tilde{\delta}_{j,0})$, where $\gamma$ is a nontrivial deck transformation and $1 \leq j \leq i-1$.\\
\textbf{Third subcase} One of the arcs $\tilde{\alpha}$ and $\tilde{\alpha}'$ meets an arc of the form $\gamma(\tilde{\delta}_{j,0})$, where $\gamma$ is a nontrivial deck transformation and $1 \leq j \leq i-1$ and the other does not.\\
We construct the arc $\tilde{\delta}_{i}$ only in the first two subcases and leave the construction to the reader in the third one.
Changing the orientation if necessary, we can suppose that the arcs $\tilde{\alpha}:[0,1] \rightarrow \partial D_{0}$ and $\tilde{\alpha}': [0,1] \rightarrow \partial D_{0}$ are oriented in such a way that the points $\tilde{\alpha}(1)$ and $\tilde{\alpha}'(1)$ lie on the right of the curve $\tilde{\delta}_{i,0}$ in $D_{0}$.

Let us study the first subcase. Let $\tau$ be the parameter in $[0,1]$ defined by $\tau=0$ if $\delta_{i-1} \cap \tilde{\alpha} = \emptyset$ and $\left\{ \tilde{\alpha}(\tau) \right\}= \tilde{\delta}_{i-1} \cap \tilde{\alpha}$ otherwise. Let $\tau'$ be the parameter in $[0,1]$ defined by $\tau'=0$ if $\delta_{i-1} \cap \tilde{\alpha}' = \emptyset$ and $\left\{ \tilde{\alpha}'(\tau') \right\}= \tilde{\delta}_{i-1} \cap \tilde{\alpha}'$ otherwise. 
Then take an arc $\tilde{\delta}_{i}$ with the following properties.
\begin{enumerate}
\item The point $\tilde{\delta}_{i}(0)$ belongs to the arc $\tilde{\alpha}$ and the point $\tilde{\delta}_{i}(1)$ belongs to the arc $\tilde{\alpha}'$.
\item $\tilde{\delta}_{i}((0,1)) \subset D_{0}- \partial D_{0}$.
\item The curve $\tilde{\delta}_{i}$ meets each of the curves $\tilde{\beta}_{j}$, with $1 \leq j \leq L$, in only one point.
\item The compact sets of the form $\tilde{f}^{k}(\tilde{\delta}_{i-1} \cup \tilde{\alpha}([0,\tau]) \cup \tilde{\alpha}'([0,\tau']))\cap D_{0}$ with $-2^{L-i+1}\leq k \leq 2^{L-i+1}$ lie strictly on the left of the arc $\tilde{\delta}_{i}$ in $D_{0}$.
\item The curves of the form $\tilde{f}^{k}(\tilde{\alpha}'')$, where $\tilde{\alpha}''$ belongs to $A_{i}$ and $-2^{L-i+1}\leq k \leq 2^{L-i+1}$, lie strictly on the right of the curve $\tilde{\delta}_{i}$ in $D_{0}$ and the curves of the form $\tilde{f}^{k}(\tilde{\alpha}'')$, where $\tilde{\alpha}''$ belongs to $B_{i}$ and $-2^{L-i+1}\leq k \leq 2^{L-i+1}$, lie strictly on the left of the curve $\tilde{\delta}_{i}$ in $D_{0}$.
\end{enumerate}
The arc $\tilde{\delta}_{i}$ satisfies the required properties.

Now, we look at the second subcase (see Figure \ref{secondsubcase} for an illustration of the notation). Denote by $\gamma$ the deck transformation such that $\gamma(D_{0}) \cap D_{0}= \tilde{\alpha}$ and by $\gamma'$ the deck transformation such that $\gamma'(D_{0}) \cap D_{0}= \tilde{\alpha}'$. Let $\tilde{\alpha}(t_{0})=\tilde{\delta}_{i,0}(0)$ and $\tilde{\alpha}'(t'_{0})=\tilde{\delta}_{i,0}(1)$. Denote by $t_{-,0}$ (respectively $t'_{-,0}$) the supremum of the real numbers $t< t_{0}$ (respectively $t<t'_{0}$) such that $\tilde{\alpha}(t)$ (respectively $\tilde{\alpha}'(t)$) meets an arc of the form $\gamma \tilde{\delta}_{j,0}$ (respectively $\gamma' \tilde{\delta}_{j,0}$) with $j<i$. Take $t_{-,0}=0$ (respect. $t'_{-,0}=0$) if there is no such real number. Likewise, denote by $t_{+,0}$ (respectively $t'_{+,0}$) the infimum of the real numbers $t> t_{0}$ (respectively $t>t'_{0}$) such that $\tilde{\alpha}(t)$ (respectively $\tilde{\alpha}'(t)$) meets an arc of the form $\gamma \tilde{\delta}_{j,0}$ (respectively $\gamma' \tilde{\delta}_{j,0}$) with $j<i$. Take $t_{+,0}=1$ (respectively $t'_{+,0}=1$) if there is no such real number. Denote by $j_{-}$, $j_{+}$, $j'_{-}$ and $j'_{+}$ the indices such that, respectively, the point $\tilde{\alpha}(t_{-,0})$ belongs to the arc $\gamma( \tilde{\delta}_{j_{-},0})$, the point $\tilde{\alpha}'(t'_{-,0})$ belongs to the arc $\gamma'(\tilde{\delta}_{j'_{-},0})$, the point $\tilde{\alpha}(t_{+,0})$ belongs to the arc $\gamma(\tilde{\delta}_{j_{+},0})$ and the point $\tilde{\alpha}'(t'_{+,0})$ belongs to the arc $\gamma'(\tilde{\delta}_{j'_{+},0})$. Finally, denote by $t_{-}$, $t_{+}$, $t'_{-}$ and $t'_{+}$ the real numbers in $[0,1]$ such that, respectively, the point $\tilde{\alpha}(t_{-})$ belongs to the arc $\gamma (\tilde{\delta}_{j_{-}})$, the point $\tilde{\alpha}(t_{+})$ belongs to the arc $\gamma (\tilde{\delta}_{j_{+}})$, the point $\tilde{\alpha}'(t_{-})$ belongs to the arc $\gamma' (\tilde{\delta}_{j'_{-}})$, the point $\tilde{\alpha}'(t_{+})$ belongs to the arc $\gamma' (\tilde{\delta}_{j'_{+}})$. Finally, as in the first subcase, let $\tau$ (respectively $\tau'$) be the parameter in $[0,1]$ defined by $\tau=0$ (respectively $\tau'=0$) if $\tilde{\delta}_{i-1} \cap \tilde{\alpha} = \emptyset$ (respectively $\tilde{\delta}_{i-1} \cap \tilde{\alpha}' = \emptyset$) and $\left\{ \tilde{\alpha}(\tau) \right\} = \tilde{\delta}_{i-1} \cap \tilde{\alpha}$ (respectively $\left\{ \tilde{\alpha}'(\tau') \right\} = \tilde{\delta}_{i-1} \cap \tilde{\alpha}'$) otherwise.

\begin{figure}[ht]
\begin{center}
\includegraphics{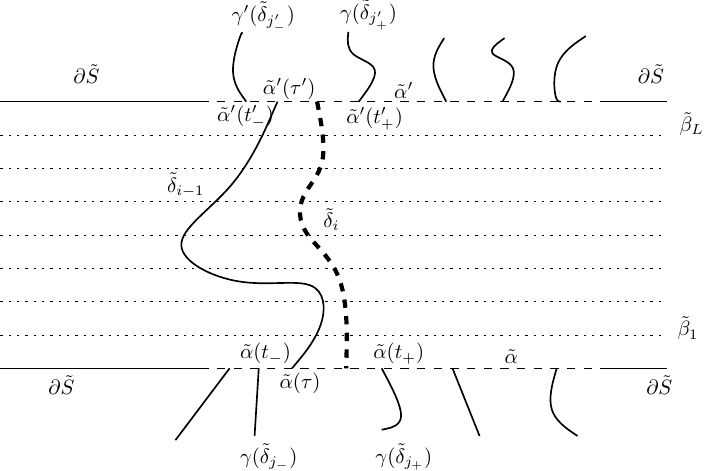}
\end{center}
\caption{Notation in the second subcase}
\label{secondsubcase}
\end{figure}

There exists an arc $\tilde{\delta}_{i}:[0,1] \rightarrow D_{0}$ with the following properties.
\begin{enumerate}
\item The point $\tilde{\delta}_{j}(0)$ belongs to the arc $\tilde{\alpha}$ and the point $\tilde{\delta}_{j}(1)$ to the arc $\tilde{\alpha}'$.
\item $\tilde{\delta}_{i}((0,1)) \subset D_{0}- \partial D_{0}$.
\item The curve $\tilde{\delta}_{i}$ meets each of the curves $\tilde{\beta}_{j}$, with $1 \leq j \leq L$, in only one point.
\item The compact sets of the form $\tilde{f}^{k}(\tilde{\delta}_{i-1})\cap D_{0}$, $\tilde{f}^{k}(\tilde{\alpha}([0, \max(\tau, t_{-})]))\cap D_{0}$, $\tilde{f}^{k}(\tilde{\alpha}'([0, \max(\tau',t'_{-})])) \cap D_{0}$, $\tilde{f}^{k} \gamma(\tilde{\delta}_{j_{-}})\cap D_{0}$ and $\tilde{f}^{k} \gamma(\tilde{\delta}_{j'_{-}})\cap D_{0}$ lie strictly on the left of $\tilde{\delta}_{i}$ in $D_{0}$.
\item The compact sets of the form $\tilde{f}^{k}(\tilde{\alpha}([t_{+},1]))\cap D_{0}$, $\tilde{f}^{k}(\tilde{\alpha}'([t'_{+},1])) \cap D_{0}$, $\gamma(\tilde{\delta}_{j_{+}})\cap D_{0}$ and $\tilde{f}^{k}\gamma(\tilde{\delta}_{j'_{+}})\cap D_{0}$ lie strictly on the right of $\delta_{i}$ in $D_{0}$.
\item The curves of the form $\tilde{f}^{k}(\tilde{\alpha}'')$, where $\tilde{\alpha}''$ belongs to $A_{i}$ and $-2^{L-i+1}\leq k \leq 2^{L-i+1}$, lie strictly on the right of the curve $\tilde{\delta}_{i}$ in $D_{0}$ and the curves of the form $\tilde{f}^{k}(\tilde{\alpha}'')$, where $\tilde{\alpha}''$ belongs to $B_{i}$ and $-2^{L-i+1}\leq k \leq 2^{L-i+1}$, lie strictly on the left of the curve $\tilde{\delta}_{i}$ in $D_{0}$.
\end{enumerate}
Such an arc satisfies the required conditions (note that, by construction of the chart $\varphi$, $\gamma \tilde{\alpha}' \neq \tilde{\alpha}$ and $\gamma' \tilde{\alpha} \neq \tilde{\alpha}'$ hence the curve $\gamma \tilde{\delta}_{i}$ does not meet the arc $\tilde{\alpha}$ and the curve $\gamma' \tilde{\delta}_{i}$ does not meet the arc $\tilde{\alpha}$).
The induction is complete.
\end{proof}

Now we turn to the proof of Proposition \ref{disjointcurvesboundary}.
\begin{proof}[Proof of Proposition \ref{disjointcurvesboundary}]
Denote by $\mathcal{G}$ the set of deck transformations $\gamma$ of $\tilde{S}$ such that
\begin{enumerate}
\item The fundamental domains $\gamma(D_{0})$ and $D_{0}$ have an essential arc in common.
\item The fundamental domain $D_{0}$ lies on the left of this essential arc.
\end{enumerate}
The group $\pi_{1}(S)$ is the free group on the elements of $\mathcal{G}$.

Let $l= 1-\chi(S)$. We prove by induction on $0 \leq b \leq l$ that, for any $N \geq 1$, the following property $\mathcal{P}(N)$ holds. There exists a family $(\alpha_{i})_{1 \leq i \leq b}$ of pairwise disjoint and pairwise non-homotopic essential arcs of $S$ and a homeomorphism $g_{b}$ in $\mathrm{Homeo}_{0}(S)$ with the following properties.
\begin{enumerate}
\item For any index $1 \leq i \leq b$, $g_{b}(\alpha_{i})= \alpha_{i,0}$.
\item Let $D_{b}=\tilde{g}_{b}^{-1}(D_{0})$ and denote by $\tilde{\alpha}_{i}: [0,1] \rightarrow \tilde{S}$ the lift of $\alpha_{i}$ such that $\tilde{\alpha}_{i}([0,1]) \subset D_{b}$ and the fundamental domain $D_{b}$ lies on the left of $\tilde{\alpha}_{i}$. Then, for any deck transformation $\gamma \in \pi_{1}(S)$, any indices $1 \leq i \neq j \leq b$ and any $\left| k \right| \leq N (l_{\mathcal{G}}(\gamma)+1)$, $\gamma \tilde{f}^{k}(\tilde{\alpha}_{i}) \cap \tilde{\alpha}_{j} = \emptyset$.
\item For any non-trivial automorphism $\gamma \in \pi_{1}(S)$, any $1 \leq i \leq b$ and any $\left| k \right| \leq Nl_{\mathcal{G}}(\gamma)$, $\gamma \tilde{f}^{k}(\tilde{\alpha}_{i}) \cap \tilde{\alpha}_{i} = \emptyset$.
\end{enumerate}

Notice that, if the above properties hold for $b=l$, Proposition \ref{disjointcurvesboundary} is proved.

For $b=0$, there is nothing to prove.

Suppose that the above property holds for $b < l$ and let us prove this for $b+1$. Fix $N' \geq 1$ and let $N=2N'$. Consider a family $(\alpha_{i})_{1 \leq i \leq b}$ of pairwise disjoint and pairwise non-homotopic essential arcs and a homeomorphism $g_{b}$ which satisfy $\mathcal{P}(3N)=\mathcal{P}(6N')$. Let $\tilde{\beta}_{0}=\tilde{g}_{b}^{-1}(\tilde{\alpha}_{b+1,0})$, where $\tilde{\alpha}_{b+1,0}$ is the lift of the arc $\alpha_{b+1,0}$ such that the fundamental domain $D_{0}$ lies on the left of the arc $\tilde{\alpha}_{b+1,0}$.

For any reduced words $w$ and $w'$ in elements of $\mathcal{G}$, denote by $w \pi_{1}(S)w'$ the set of automorphisms in $\pi_{1}(S)$ whose reduced representative starts with the word $w$ and ends with the word $w'$. Denote by $a$ the element of the generating set $\mathcal{G}$ such that $D_{b} \cap a(D_{b})= \tilde{\beta}_{0}$. It is also the element of the generating set $\mathcal{G}$ such that $D_{0} \cap a(D_{0})= \tilde{\alpha}_{b+1,0}$, as $\tilde{g}_{b} a= a \tilde{g}_{b}$. 

We will use the following fact repeatedly.

\noindent \underline{Fact:} For any automorphism $\gamma \in \pi_{1}(S)$, the fundamental domain $\gamma(D_{b})$ lies on the right of $\tilde{\beta}_{0}$ if and only if $\gamma \in a \pi_{1}(S)$. 

Let $\Gamma$ be the set of deck transformations whose reduce representative does not begin with the letter $a$ and does not end with the letter $a^{-1}$. The proof relies on the following lemma.

\begin{lemma} \label{goodcurve}
There exists an essential arc $\tilde{\beta}_{2}:[0,1] \rightarrow \tilde{S}$ which satisfies the following properties.
\begin{enumerate}
\item The point $\tilde{\beta}_{2}(0)$ belongs to the same connected component of $\partial \tilde{S}$ as $\tilde{\beta}_{0}(0)$ and the point $\tilde{\beta}_{2}(1)$ belongs to the same connected component of $\partial \tilde{S}$ as $\tilde{\beta}_{0}(1)$.
\item For any deck transformation $\gamma$ in $a \pi_{1}(S)$ and any integer $\left| k \right| \leq N (l_{\mathcal{G}}(\gamma)+1)$, the curves of the form $\gamma \tilde{f}^{k}(\tilde{\alpha}_{i})$ lie strictly on the right of the curve $\tilde{\beta}_{2}$.
\item For any deck transformation $\gamma$ in $a \pi_{1}(S)$ and any integer $\left| k \right| \leq N l_{\mathcal{G}}(\gamma)$, the curves of the form $\gamma \tilde{f}^{k}(\tilde{\beta}_{2})$ lie strictly on the right of the curve $\tilde{\beta}_{2}$.
\item For any deck transformation $\gamma$ in $\Gamma$ different from the identity and for any integer $\left| k \right| \leq N l_{\mathcal{G}}(\gamma)$, the curve $\gamma \tilde{f}^{k}(\tilde{\beta}_{2})$ lies strictly on the left of the curve $\tilde{\beta}_{2}$.
\item For any deck transformation $\gamma$ in $\Gamma \cup \pi_{1}(S)a^{-1}-a \pi_{1}(S)a^{-1}$ and for any integer $\left| k \right| \leq N (l_{\mathcal{G}}(\gamma)+1)$, the curves of the form $\gamma \tilde{f}^{k}(\tilde{\alpha}_{i})$ lie strictly on the left of the curve $\tilde{\beta}_{2}$.
\end{enumerate}
\end{lemma}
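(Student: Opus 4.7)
The plan is to construct $\tilde{\beta}_2$ as a controlled perturbation of $\tilde{\beta}_0$ that threads between the relevant orbits on the correct sides. The three ingredients are the non-spreading hypothesis on $f$, the (quasi-)isometric action of $\pi_1(S)$ on $\tilde{S}$ (so that $l_{\mathcal{G}}(\gamma)$ is comparable to the distance from a basepoint to $\gamma(D_b)$), and the induction hypothesis $\mathcal{P}(3N)$ which gives pairwise disjointness of the relevant translates $\gamma \tilde{f}^{k}(\tilde{\alpha}_{i})$.

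First, I would establish a large-word-length estimate. Using $\mathrm{diam}(\tilde{f}^{k}(D))/k\to 0$ and the fact that $\tilde{\alpha}_i$ and $\tilde{\beta}_0$ are contained in the compact set $D_b$, there exists a function $\varepsilon(n)\to 0$ such that, for any $\gamma \in \pi_1(S)$ with $l_{\mathcal{G}}(\gamma)=n$ and any $|k|\le N(n+1)$, the curves $\gamma \tilde{f}^{k}(\tilde{\alpha}_i)$ and $\gamma \tilde{f}^{k}(\tilde{\beta}_0)$ lie in a neighbourhood of $\gamma(D_b)$ of diameter $\le n\,\varepsilon(n)$. Since $d(D_b,\gamma(D_b))$ grows at least linearly in $n$, there is an integer $R_0=R_0(N)$ such that, for every $\gamma$ with $l_{\mathcal{G}}(\gamma)\ge R_0$, the corresponding translates lie strictly on the right of $\tilde{\beta}_0$ if $\gamma\in a\pi_1(S)$, and strictly on the left if $\gamma\notin a\pi_1(S)$ (in particular if $\gamma\in\Gamma$). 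Thus the infinitely many conditions of the lemma collapse into finitely many non-trivial ones, indexed by the $\gamma$ with $l_{\mathcal{G}}(\gamma)<R_0$.

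Second, for this finite set of $\gamma$'s, the induction hypothesis $\mathcal{P}(3N)$ applied with range constant $3N$ (rather than $N$) guarantees that every curve $\gamma \tilde{f}^{k}(\tilde{\alpha}_i)$ appearing in conditions~(2) and~(5) is disjoint from $\tilde{\beta}_0$ (since $\tilde{\beta}_0\subset\partial D_b$ is adjacent to the $\tilde{\alpha}_j$'s, and $3N$ leaves a comfortable margin over $N(l_\mathcal{G}(\gamma)+1)$ for $l_\mathcal{G}(\gamma)<R_0$, after taking $N_0$ slightly larger if necessary). Hence each of these finitely many curves lies strictly on one side of $\tilde{\beta}_0$, and in fact on the correct side, namely: if $\gamma\in a\pi_1(S)$ then $\gamma\tilde{f}^{k}(\tilde{\alpha}_i)$ is on the right, and if $\gamma\in\Gamma$ then it is on the left, because both sets are close to $\gamma(D_b)$ which itself lies on the appropriate side of $\tilde{\beta}_0$. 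I then obtain $\tilde{\beta}_2$ by a finite general-position perturbation of $\tilde{\beta}_0$ inside a thin neighbourhood of $\tilde{\beta}_0$ in $D_b\cup a(D_b)$, using the Schönflies theorem, keeping the endpoints on the prescribed components of $\partial\tilde{S}$.

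Finally, the self-referential conditions~(3) and~(4) are dealt with by choosing the perturbation small enough so that $\tilde{f}^{k}(\tilde{\beta}_2)$ remains within the tubular neighbourhood of $\tilde{f}^{k}(\tilde{\beta}_0)$ used for the non-spreading estimate. Then the same large-$l_\mathcal{G}$ argument places $\gamma\tilde{f}^{k}(\tilde{\beta}_2)$ on the correct side of $\tilde{\beta}_2$ for $l_\mathcal{G}(\gamma)\ge R_0$, and for the finitely many remaining $\gamma\in a\pi_1(S)\cup\Gamma$ with $l_\mathcal{G}(\gamma)<R_0$, the curves $\gamma\tilde{f}^{k}(\tilde{\beta}_0)$ are already strictly on the correct side of $\tilde{\beta}_0$ (for $|k|\le N\,l_\mathcal{G}(\gamma)$, using $\mathcal{P}(3N)$ with the factor $3$ of slack), and this strict inequality survives a sufficiently small perturbation; this is exactly why the lemma requires $|k|\le N l_{\mathcal{G}}(\gamma)$ for the self-referential items and the slightly larger $N(l_{\mathcal{G}}(\gamma)+1)$ for items~(2) and~(5). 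The main obstacle is precisely this self-referential circularity in items~(3)--(4), which is overcome by the small-perturbation argument combined with the gap between the constants $N$ and $3N$ in the induction hypothesis; the exclusion of elements ending in $a^{-1}$ from $\Gamma$ (those whose translates $\gamma(D_b)$ share an essential arc with $D_b$ on the other side) is what prevents the remaining degenerate case from occurring.
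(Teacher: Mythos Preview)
Your argument has a genuine gap at the crucial point. The induction hypothesis $\mathcal{P}(3N)$ only controls intersections among the arcs $\tilde{\alpha}_{1},\ldots,\tilde{\alpha}_{b}$ and their translates; it says nothing whatsoever about the arc $\tilde{\beta}_{0}$. Hence for $\gamma\in a\pi_{1}(S)$ (or $\gamma\in\Gamma$) of small word length and $|k|\leq N(l_{\mathcal{G}}(\gamma)+1)$, there is no reason the curves $\gamma\tilde{f}^{k}(\tilde{\alpha}_{i})$ should avoid $\tilde{\beta}_{0}$, and there is certainly no reason the curves $\gamma\tilde{f}^{k}(\tilde{\beta}_{0})$ should lie on one side of $\tilde{\beta}_{0}$. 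In fact they typically do not: for instance $a\tilde{f}^{k}(\tilde{\beta}_{0})=\tilde{f}^{k}(a\tilde{\beta}_{0})$ will in general cross $\tilde{\beta}_{0}$ as soon as $\tilde{f}$ displaces points near $\tilde{\beta}_{0}$. Your sentence ``since $\tilde{\beta}_{0}\subset\partial D_{b}$ is adjacent to the $\tilde{\alpha}_{j}$'s'' does not convert a disjointness statement about the $\tilde{\alpha}_{j}$'s into one about $\tilde{\beta}_{0}$, and the phrase ``$3N$ leaves a comfortable margin'' has no content here because $\mathcal{P}(3N)$ simply does not mention $\tilde{\beta}_{0}$.

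Because of this, your handling of the self-referential conditions (3)--(4) collapses: the claim that ``the curves $\gamma\tilde{f}^{k}(\tilde{\beta}_{0})$ are already strictly on the correct side of $\tilde{\beta}_{0}$'' is unjustified, so there is no strict inequality to be preserved under perturbation. A small $C^{0}$ perturbation of $\tilde{\beta}_{0}$ cannot separate curves that genuinely cross it. What is actually needed is a construction that \emph{replaces} $\tilde{\beta}_{0}$ by a new arc lying on the correct side of all of the finitely many offending curves simultaneously. The paper does this via an explicit ``infimum of arcs'' operation (a K\'er\'ekj\'art\'o-type lemma): given finitely many essential arcs with the correct asymptotic behaviour, one takes the boundary of the appropriate component of the common left half-plane to produce a single arc $\tilde{\beta}_{1}$ lying to the left of all of them; then a careful iterated local push makes the inequalities strict; and finally a symmetric ``supremum'' construction, run inductively on decreasing word length, handles the $\Gamma$-conditions (4)--(5). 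Your large-word-length finiteness step (essentially the paper's Lemma~\ref{finitenumber}) is correct and is indeed the starting point, but the substantive work --- threading an arc past finitely many crossing curves while maintaining equivariance --- is missing from your proposal.
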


This lemma is proved below. We now explain how to complete the induction. Let $\tilde{\alpha}_{b+1}= \tilde{\beta}_{2}$.

By construction of the curve $\tilde{\alpha}_{b+1}$, for any non-trivial deck transformation $\gamma \in a\pi_{1}(S) \cup \Gamma$ and any $\left| k \right| \leq N' l_{\mathcal{G}}(\gamma)$, we have $\gamma \tilde{f}^{k}(\tilde{\alpha}_{b+1}) \cap \tilde{\alpha}_{b+1} = \emptyset$. Hence, for any element $\gamma$ of $\pi_{1}(S)a^{-1}$ and any $\left|k \right| \leq N' l_{\mathcal{G}}(\gamma)$, $\gamma \tilde{f}^{k}(\tilde{\alpha}_{b+1}) \cap \tilde{\alpha}_{b+1}= \gamma \tilde{f}^{k}( \tilde{\alpha}_{b+1} \cap \gamma^{-1}\tilde{f}^{-k}( \tilde{\alpha}_{b+1})) = \emptyset$.

 As $\pi_{1}(S)=\pi_{1}(S)a^{-1} \cup a \pi_{1}(S) \cup \Gamma$, we have proved that for any element $\gamma$ of $\pi_{1}(S)$ and any $\left| k \right| \leq N' l_{\mathcal{G}}(\gamma)$, $\gamma \tilde{f}^{k}(\tilde{\alpha}_{b+1}) \cap \tilde{\alpha}_{b+1}=\emptyset$.

Moreover, Lemma \ref{goodcurve} implies that, for any deck transformation $\gamma \in \pi_{1}(S)$ and any indices $1 \leq i \neq j \leq b+1$, if $\left| k \right| \leq N' (l_{\mathcal{G}}(\gamma)+1)$, then $\gamma \tilde{f}^{k}(\tilde{\alpha}_{i}) \cap \tilde{\alpha}_{j} = \emptyset$.

Observe that the projection $\alpha_{b+1}$ on $S$ of the arc $\tilde{\alpha}_{b+1}$ is an essential arc. Now we construct the homeomorphism $g_{b+1}$. Notice that the arc $\alpha_{b+1}$ is homotopic to the arc $\beta_{0}$ relative to $\partial S \cup \cup_{1 \leq i \leq b}\alpha_{i}$. Hence, by the main theorem of the article \cite{Eps} by Epstein, there exists a homeomorphism $g'_{b+1}$ in $\mathrm{Homeo}_{0}(S)$ which pointwise fixes the curves $\alpha_{i}$ for $1 \leq i \leq b$ and which sends the curve $\beta_{0}$ to the curve $\alpha_{b+1}$. Then take $g_{b+1}=g'_{b+1} g_{b}$.
\end{proof}

It remains to prove Lemma \ref{goodcurve}. We need the following lemma (see Figure \ref{infcurves}).

\begin{lemma} \label{inf}
Let $\tilde{\alpha}$ be an essential arc of $\tilde{S}$. Denote by $\mathcal{C}$ the set of boundary components of $\tilde{S}$ which lie strictly on the left of $\tilde{\alpha}$. Let $(\tilde{\alpha}_{i})_{1 \leq i \leq n}$ be a finite family of essential arcs of $\tilde{S}$ such that, for any $i$, any component in $\mathcal{C}$ lies strictly on the left of $\tilde{\alpha}_{i}$.

Then there exists a unique essential arc $\inf_{\tilde{\alpha}}((\tilde{\alpha}_{i})_{1 \leq i \leq n})$ with the following properties.
\begin{enumerate}
\item Any essential arc of $\tilde{S}$ which lies (strictly) on the left $\tilde{\alpha}$ and of the $\tilde{\alpha}_{i}$'s lies (strictly) on the left of the arc $\inf_{\tilde{\alpha}}((\tilde{\alpha}_{i})_{1 \leq i \leq n})$.
\item The arc $\inf_{\tilde{\alpha}}((\tilde{\alpha}_{i})_{1 \leq i \leq n})$ lies on the left of the arc $\tilde{\alpha}$ and of the arcs $\tilde{\alpha}_{i}$.
\end{enumerate}
Moreover, any point of the essential arc $\inf_{\tilde{\alpha}}((\tilde{\alpha}_{i})_{1 \leq i \leq n})$ belongs to either the arc $\tilde{\alpha}$ or one of the arcs $\tilde{\alpha}_{i}$.
\end{lemma}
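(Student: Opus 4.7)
My plan is to construct $\inf_{\tilde{\alpha}}((\tilde{\alpha}_{i}))$ as the ``inner frontier'' of the complement of the union of open right regions. For each arc $\beta \in \{\tilde{\alpha}, \tilde{\alpha}_{1}, \ldots, \tilde{\alpha}_{n}\}$, denote by $R_{\beta}$ the open connected component of $\tilde{S} \setminus \beta$ strictly on the right of $\beta$, and set $R := R_{\tilde{\alpha}} \cup \bigcup_{i} R_{\tilde{\alpha}_{i}}$ and $K := \tilde{S} \setminus R$. Since $S$ is neither the disc, nor the annulus, nor the M\"obius strip, the universal cover $\tilde{S}$ is a simply connected planar surface with boundary, so standard plane topology is available.

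I would first check that $K$ contains every boundary component in $\mathcal{C}$ together with the open region strictly on the left of $\tilde{\alpha}$. By the hypothesis on $\mathcal{C}$, each such component is disjoint from $R_{\tilde{\alpha}}$ and from every $R_{\tilde{\alpha}_{i}}$, and any point strictly on the left of $\tilde{\alpha}$ that lay in some $R_{\tilde{\alpha}_{i}}$ would separate a component of $\mathcal{C}$ from the rest of the left region of $\tilde{\alpha}_{i}$, contradicting planarity. These two sets together form a connected subset of $K$, which therefore lies in a single connected component $K_{0}$ of $K$.

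Next I analyse the frontier of $K_{0}$ in $\tilde{S}$. Any point of $\partial K_{0} \cap \mathring{\tilde{S}}$ is a limit of points of $R$, hence belongs to one of the arcs $\tilde{\alpha}, \tilde{\alpha}_{1}, \ldots, \tilde{\alpha}_{n}$. Using simple connectedness of $\tilde{S}$, the frontier of $K_{0}$ inside $\mathring{\tilde{S}}$ is a single embedded arc $F$ whose endpoints lie on the two components of $\partial \tilde{S}$ met by $\tilde{\alpha}$ (the only components of $\partial \tilde{S}$ adjacent to $K_{0}$ but not lying inside $K_{0}$). I then define $\inf_{\tilde{\alpha}}((\tilde{\alpha}_{i})) := F$; by construction $F$ is essential, lies on the left of $\tilde{\alpha}$ and every $\tilde{\alpha}_{i}$, and each of its points belongs to one of these arcs.

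For property~(1), any essential arc $\tilde{\beta}$ lying on the left of $\tilde{\alpha}$ and of each $\tilde{\alpha}_{i}$ is disjoint from $R$, so $\tilde{\beta} \subset K$; a connectedness argument (its endpoints land on the same boundary components of $\partial\tilde{S}$ as those bordering $K_{0}$) forces $\tilde{\beta} \subset K_{0}$, hence $\tilde{\beta}$ lies on the left of $F$. The strict version follows by replacing each $R_{\beta}$ by its closure in the argument, and uniqueness of $F$ is then a formal consequence of property~(1). The main obstacle I expect is the step showing that $\partial K_{0} \cap \mathring{\tilde{S}}$ is a single embedded essential arc rather than a disconnected union: the key fact is that $K_{0}$, being the distinguished component of the complement of an open subset of a simply connected planar surface and containing the specified boundary components of $\tilde{S}$, is itself simply connected, so its frontier in $\mathring{\tilde{S}}$ has exactly one component separating it from $R$.
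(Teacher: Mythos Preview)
Your approach is essentially dual to the paper's: the paper intersects the open \emph{left} regions $U, U_{1}, \ldots, U_{n}$ and takes the connected component $J$ whose closure contains the boundary components in $\mathcal{C}$, whereas you take the complement of the union of open \emph{right} regions; these produce the same set up to closures, so the overall strategy is the same. Two points, however, deserve correction.

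First, your claim that the entire open left region $L_{\tilde{\alpha}}$ of $\tilde{\alpha}$ is contained in $K$ is not true in general, and the ``separation/planarity'' justification you give is not a valid argument. An arc $\tilde{\alpha}_{i}$ may well cross $\tilde{\alpha}$ and carve out a small disc inside $L_{\tilde{\alpha}}$ that lies in $R_{\tilde{\alpha}_{i}}$, provided that disc contains no boundary component of $\mathcal{C}$; the hypothesis on $\mathcal{C}$ allows this. What you actually need is only that the components of $\mathcal{C}$ all lie in the \emph{same} connected component of $K$. The paper obtains this by observing that these components lie in a single connected arc of $\partial J_{0}\setminus(\tilde{\alpha}\cup\bigcup_{i}\tilde{\alpha}_{i})$, where $J_{0}=\mathrm{int}(\tilde{S})$ is viewed as a Jordan domain in the sphere.

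Second, and more importantly, the step you flag as the main obstacle---that the frontier of $K_{0}$ in $\mathrm{int}(\tilde{S})$ is a single simple arc---is precisely where the paper invokes a nontrivial ingredient: Ker\'ekj\'art\'o's lemma, which says that any connected component of a finite intersection of Jordan domains on the sphere is again a Jordan domain. Embedding $\mathrm{int}(\tilde{S})$ as a Jordan domain $J_{0}$ in the sphere (via the Poincar\'e disc model of the universal cover of the double of $S$), the left regions $U,U_{i}$ become Jordan domains, and hence $J$ is a Jordan domain; its frontier inside $\mathrm{int}(\tilde{S})$ is then automatically a simple arc. Your appeal to ``$K_{0}$ is simply connected'' is not sufficient on its own: simple connectivity of an open planar set does not guarantee that its frontier is a simple curve (one needs local connectedness of the boundary, which is exactly what Ker\'ekj\'art\'o's lemma provides here).
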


\begin{figure}[ht]
\begin{center}
\includegraphics{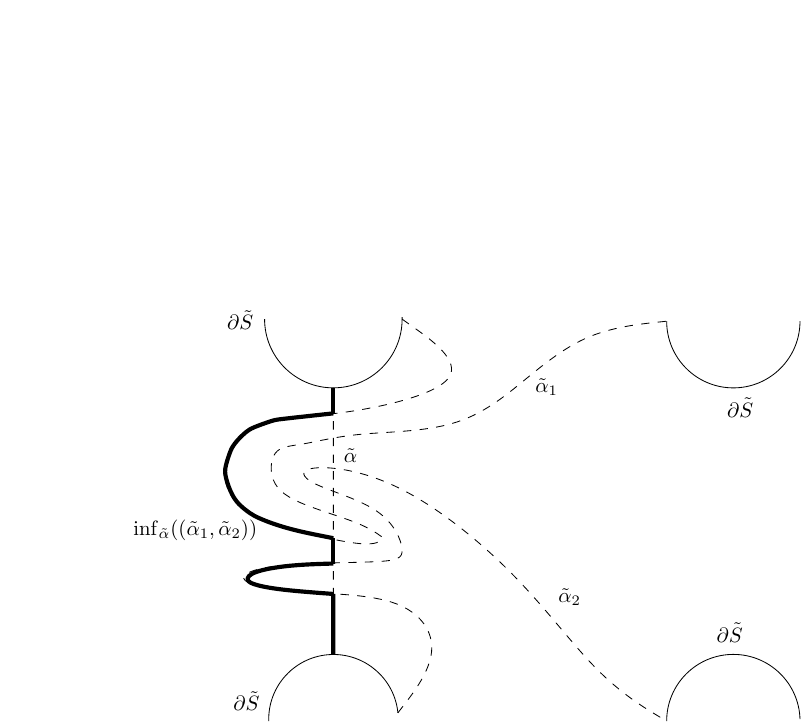}
\end{center}
\caption{Illustration of Lemma \ref{inf}}
\label{infcurves}
\end{figure}

Of course, in the above lemma, if we replace the words "left" by "right", we can define a curve $\sup_{\tilde{\alpha}}((\tilde{\alpha}_{i})_{1 \leq i \leq n})$.

\begin{proof}
We use the following lemma by Kerekjarto (see \cite{LCY} p. 246 for a proof).

\begin{lemma} \label{Kerekjarto}
Let $U_{1}, U_{2}, \ldots, U_{k}$ be Jordan domains of the sphere, that is, connected components of the complement of a Jordan curve. Then each connected component of $U_{1}\cap U_{2} \cap \ldots \cap U_{k}$ is a Jordan domain.
\end{lemma}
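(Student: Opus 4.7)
The plan is to induct on $k$. The case $k=1$ is immediate (a single Jordan domain is a Jordan domain), and the inductive step reduces at once to the case $k=2$: if each component of $U_{1} \cap \cdots \cap U_{k-1}$ is a Jordan domain, then any component $V$ of $U_{1} \cap \cdots \cap U_{k}$ is contained in a unique such component $W$, hence is a connected component of $W \cap U_{k}$, an intersection of two Jordan domains.

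For $k=2$, fix a connected component $V$ of $U_{1} \cap U_{2}$ with $J_{i} = \partial U_{i}$. I would proceed in two stages. The first is to show that $V$ is simply connected. On the sphere this is equivalent to showing that $S^{2} \setminus V$ is connected, which follows from the decomposition
$$S^{2} \setminus V \;=\; (S^{2} \setminus U_{1}) \,\cup\, (S^{2} \setminus U_{2}) \,\cup\, \bigcup_{W \neq V} \overline{W},$$
where $W$ runs over the other components of $U_{1} \cap U_{2}$. Each $S^{2} \setminus U_{i}$ is a closed Jordan disc, hence connected; in the generic case $J_{1} \cap J_{2} \neq \emptyset$ their union is connected (the disjoint case is trivial), and for each other component $W$ one has $\partial W \subset J_{1} \cup J_{2} \subset (S^{2} \setminus U_{1}) \cup (S^{2} \setminus U_{2})$, so attaching $\overline{W}$ preserves connectedness.

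The second and main stage is to show that $\partial V$ is a Jordan curve; combined with Step~1 and Schoenflies, this gives that $\overline V$ is a closed topological disc and $V$ a Jordan domain. My strategy would be polygonal approximation: approximate $J_{1}$ and $J_{2}$ in the Hausdorff topology by polygonal Jordan curves $J_{1}^{(n)}, J_{2}^{(n)}$ meeting transversely in finitely many points, for which the conclusion is elementary (each component of the intersection of the bounded complementary regions is a topological polygon). One then exhausts $V$ from inside by polygonal Jordan subdomains $V_{n}$ and studies the limiting behaviour of $\partial V_{n}$.

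The main obstacle is precisely this limiting step. The boundary $\partial V$ can a priori be quite wild near a point of $J_{1} \cap J_{2}$, which may be a Cantor-like set, so one must rule out non-local-connectivity and cut points of $\partial V$. The reason it works is that $\partial V \subset J_{1} \cup J_{2}$ and each $J_{i}$ is individually a locally connected simple closed curve, so near any $p \in \partial V$ one can always bypass $p$ along $J_{1}$ or along $J_{2}$ while remaining close to $p$; this gives local connectivity, and the absence of cut points then lets one apply the Carathéodory--Torhorst characterization of Jordan domains among simply connected planar domains. The careful verification of these topological properties is the technical heart of the argument, and in practice I would refer to the detailed treatment in \cite{LCY}.
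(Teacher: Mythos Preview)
The paper does not give its own proof of this lemma: it attributes the result to Kerekj\'art\'o and simply refers the reader to \cite{LCY}, p.~246. Your sketch --- reduction to $k=2$, simple connectivity via connectedness of the complement, and then local connectivity plus absence of cut points for $\partial V$ --- is a reasonable outline of the standard argument and ends up deferring to the same reference, so there is nothing to compare against and your treatment is fully in line with the paper's.
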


We see the Poincaré disk as the unit disk in the plane which is seen as the Riemann sphere minus the point at infinity. As the universal cover of the double of the surface $S$, endowed with a hyperbolic metric, is the Poincaré disk, the surface $\tilde{S}$ is naturally a subset of the Poincaré disk and the interior of $\tilde{S}$ is a Jordan domain $J_{0}$ -its frontier is locally connected without cut points. Denote by $U$ the connected component of $int(\tilde{S})-\tilde{\alpha}$ which lies on the left of $\tilde{\alpha}$. For any $1 \leq i \leq n$, denote by $U_{i}$ the connected component of $int(\tilde{S})-\tilde{\alpha}_{i}$ which lies on the left of $\tilde{\alpha}_{i}$.

As any component in $\mathcal{C}$ is strictly on the left of $\tilde{\alpha}$ and of the $\tilde{\alpha}_{i}$'s, any point in $int(\tilde{S})$ close to such a component belongs to $U \cap \bigcap_{i=1}^{n} U_{i}$. Moreover, the components in $\mathcal{C}$ are contained in the closure of the same connected component of $U \cap \bigcap_{i=1}^{n} U_{i}$: they belong to the same connected component of $\partial J_{0} - (\bigcup_{i=1}^{n}\tilde{\alpha}_{i} \cup \tilde{\alpha})$. Let us call $J$ this connected component of $U \cap \bigcap_{i=1}^{n} U_{i}$. By Lemma \ref{Kerekjarto}, the set $J$ is a Jordan domain. Hence its frontier inside $int(\tilde{S})$ (or more precisely the closure in $\tilde{S}$ of this frontier) is an essential arc. If we orient properly this arc, it satisfies the two required properties. To prove the uniqueness part of this lemma, observe that the image of an arc which satisfies the two required properties is necessarily the closure in $\tilde{S}$ of the frontier in $int(\tilde{S})$ of the component $J$: the first condition implies that it is contained in the closure of $U \cap \bigcap_{i=1}^{n} U_{i}$. As such an arc touches a boundary component of $\partial \tilde{S}$, it must be contained in the closure of $J$. The second condition implies that this arc must be contained in the frontier of $J$. As the complement of $\partial \tilde{S}$ in this frontier is connected and the orientation of our arc is determined by the two conditions, the uniqueness part of the lemma is proved.
\end{proof}

\begin{proof}[Proof of Lemma \ref{goodcurve}]
\textbf{First step.} We prove first the following property. \emph{There exists an essential arc $\tilde{\beta}_{1}$ of $\tilde{S}$ such that
\begin{enumerate}
\item The point $\tilde{\beta}_{1}(0)$ lies on the same connected component of $\partial \tilde{S}$ as the point $\tilde{\beta}_{0}(0)$ and the point $\tilde{\beta}_{1}(1)$ lies on the same connected component of $\partial \tilde{S}$ as the point $\tilde{\beta}_{0}(1)$.
\item For any deck transformation $\gamma$ in $a \pi_{1}(S)$ and any integer $\left| k \right| \leq N (l_{\mathcal{G}}(\gamma)+1)$, the curves of the form $\gamma \tilde{f}^{k}(\tilde{\alpha}_{i})$ lie on the right of the curve $\tilde{\beta}_{1}$. 
\item For any deck transformation $\gamma$ in $a \pi_{1}(S)$ and any integer $\left| k \right| \leq N l_{\mathcal{G}}(\gamma)$, the curves of the form $\gamma \tilde{f}^{k}(\tilde{\beta}_{1})$ lie on the right of the curve $\tilde{\beta}_{1}$.
\end{enumerate}}

We start with a lemma where we use the non-spreading hypothesis.

\begin{lemma} \label{finitenumber}
In the set of essential arcs 
$$ \left\{ \gamma \tilde{f}^{k}(\tilde{\beta}_{0}),
\left\{
\begin{array}{l}
\left| k \right| \leq N(l_{\mathcal{G}}(\gamma)+1) \\
\gamma \in a \pi_{1}(S) 
\end{array}
\right.
\right\} \cup \left\{ \gamma(\tilde{f}^{k}(\tilde{\alpha}_{i})), 
\left\{
\begin{array}{l}
 \left| k \right| \leq N(l_{\mathcal{G}}(\gamma)+1) \\
1 \leq i \leq b, \gamma \in a \pi_{1}(S)
\end{array}
\right.
\right\},$$
only a finite number of arcs meet the curve $\tilde{\beta}_{0}$.
\end{lemma}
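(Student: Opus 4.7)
My plan is to combine the non-spreading hypothesis on $f$ with the quasi-isometry between $\pi_{1}(S)$ and $\tilde{S}$, concluding via proper discontinuity of the deck-transformation action.

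First, I would show that $\mathrm{diam}(\tilde{f}^{k}(\tilde{\beta}_{0}))$ and each $\mathrm{diam}(\tilde{f}^{k}(\tilde{\alpha}_{i}))$ are $o(|k|)$. Since $\tilde{\beta}_{0}$ is compact and connected, it is covered by a chain of finitely many translates $\gamma_{1}D,\ldots,\gamma_{r}D$ of a fundamental domain $D$ with consecutive translates sharing a face. Because $\tilde{f}$ commutes with deck transformations, $\tilde{f}^{k}(\tilde{\beta}_{0})$ is covered by the analogous chain $\gamma_{1}\tilde{f}^{k}(D),\ldots,\gamma_{r}\tilde{f}^{k}(D)$, whose successive pieces still share a face, so its diameter is at most $r\cdot\mathrm{diam}(\tilde{f}^{k}(D))=o(|k|)$ by the non-spreading assumption.

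Next, fix a base point $x_{0}\in\tilde{\beta}_{0}$. Whenever $\gamma\tilde{f}^{k}(\tilde{\beta}_{0})\cap\tilde{\beta}_{0}\neq\emptyset$, choosing $p\in\tilde{\beta}_{0}$ with $\gamma\tilde{f}^{k}(p)\in\tilde{\beta}_{0}$ and using that $\gamma$ is an isometry yields
\[
d(\gamma\tilde{f}^{k}(x_{0}),x_{0})\le \mathrm{diam}(\tilde{f}^{k}(\tilde{\beta}_{0}))+\mathrm{diam}(\tilde{\beta}_{0})=o(|k|),
\]
so $d(\tilde{f}^{k}(x_{0}),\gamma^{-1}(x_{0}))=o(|k|)$. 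The crucial additional ingredient, which I expect to be the main obstacle, is to extract from non-spreading the sublinear displacement bound $d(\tilde{f}^{k}(x_{0}),x_{0})=o(|k|)$; heuristically, a positive-speed drift of $\tilde{f}^{k}(x_{0})$ along a quasi-geodesic of $\tilde{S}$ would force the chain of adjacent translates $\tilde{f}^{k}(D),\,\eta\tilde{f}^{k}(D),\ldots$ (all of the same small diameter) to drift together at linear speed, contradicting the hypothesis that $\mathrm{diam}(\tilde{f}^{k}(D))=o(|k|)$ once the exclusion of the annulus, disc and M\"obius strip is used. Granting this, $d(\gamma^{-1}(x_{0}),x_{0})=o(|k|)$, and the \v{S}varc--Milnor quasi-isometry between $(\pi_{1}(S),d_{\mathcal{G}})$ and $(\tilde{S},d)$ gives $l_{\mathcal{G}}(\gamma)=o(|k|)$.

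Finally, combining with the constraint $|k|\le N(l_{\mathcal{G}}(\gamma)+1)$ yields $|k|\le N\cdot o(|k|)+N$, which forces $|k|\le K_{0}$ for some threshold $K_{0}$. For each of the finitely many remaining values of $k$, proper discontinuity of the $\pi_{1}(S)$-action on $\tilde{S}$ ensures that only finitely many $\gamma\in a\pi_{1}(S)$ can satisfy $\gamma\tilde{f}^{k}(\tilde{\beta}_{0})\cap\tilde{\beta}_{0}\neq\emptyset$. The same argument applied with each $\tilde{\alpha}_{i}$ in place of $\tilde{\beta}_{0}$ handles the second family, completing the proof.
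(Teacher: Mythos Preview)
Your overall architecture matches the paper's: suppose infinitely many such arcs meet $\tilde{\beta}_{0}$, translate the intersection condition into a distance estimate, invoke \v{S}varc--Milnor to convert distances into word lengths, and contradict the non-spreading hypothesis through the constraint $|k|\le N(l_{\mathcal{G}}(\gamma)+1)$. The genuine gap is exactly where you flag it: the sublinear drift claim $d(\tilde{f}^{k}(x_{0}),x_{0})=o(|k|)$. Your heuristic for this step is not a proof and, as written, does not work: the fact that the adjacent translates $\tilde{f}^{k}(D),\,\eta\tilde{f}^{k}(D),\ldots$ all have diameter $o(|k|)$ does not by itself prevent them from drifting together at linear speed---small diameter is perfectly compatible with large displacement---so there is nothing there to contradict the non-spreading hypothesis.

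The paper closes this gap with a single clean observation you are missing: since $\chi(S)\neq 0$ (this is precisely where the exclusion of the disc, the annulus and the M\"obius strip enters), the canonical lift $\tilde{f}$ must have a fixed point $\tilde{p}\in D_{b}$, for otherwise one could build a nowhere-vanishing vector field on $S$. This fixed point anchors $\tilde{f}^{k}(D_{b})$ for every $k$: one always has $\tilde{p}\in\tilde{f}^{k}(D_{b})$, so whenever $\tilde{f}^{k}(D_{b})\cap\gamma^{-1}(D_{b})\neq\emptyset$ one obtains directly
\[
\mathrm{diam}\bigl(\tilde{f}^{k}(D_{b})\bigr)\ \ge\ d\bigl(D_{b},\gamma^{-1}(D_{b})\bigr).
\]
Combined with \v{S}varc--Milnor this yields $\mathrm{diam}(\tilde{f}^{k_{n}}(D_{b}))\ge C\,l_{\mathcal{G}}(\gamma_{n})-C'$, and dividing by $|k_{n}|\le N(l_{\mathcal{G}}(\gamma_{n})+1)$ gives a ratio bounded below by a positive constant, contradicting non-spreading. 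This bypasses your base-point displacement estimate entirely. Incidentally, the fixed point also immediately implies the very claim you wanted, since $d(\tilde{f}^{k}(x_{0}),x_{0})\le d(\tilde{f}^{k}(x_{0}),\tilde{p})+d(\tilde{p},x_{0})$ and the first term is controlled by $\mathrm{diam}(\tilde{f}^{k}(D_{b}))$.
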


\begin{proof}
Suppose for a contradiction that there exists a sequence $(\gamma_{n})_{n \in \mathbb{N}}$ of non-trivial automorphisms in $\pi_{1}(S)$ with $l_{\mathcal{G}}(\gamma_{n}) \xrightarrow[n \to + \infty]{} +\infty $ and a sequence $(k_{n})_{n \in \mathbb{N}}$ of integers with $\left|k_{n} \right| \leq N(l_{\mathcal{G}}(\gamma_{n})+1)$ such that, for any $n$, there exists a curve $\tilde{\beta}$ among $\tilde{\beta}_{0}$ and the $\tilde{\alpha}_{i}$'s such that
$$ \gamma_{n}\tilde{f}^{k_{n}}(\tilde{\beta}) \cap \tilde{\beta}_{0} \neq \emptyset.$$
Then, for any $n$, $\tilde{f}^{k_{n}}(D_{b}) \cap \gamma_{n}^{-1}(D_{b}) \neq \emptyset$. As the homeomorphism $\tilde{f}$ has a fixed point in $D_{b}$ (otherwise we could build a nowhere vanishing vector field on the surface $S$, which is impossible),
$$\mathrm{diam}(\tilde{f}^{k_{n}}(D_{b})) \geq d(D_{b}, \gamma_{n}(D_{b})).$$
By the \v{S}varc--Milnor lemma (see \cite{Har} p.87), there exists constants $C,C' >0$ such that, for any $n$,
$$ \mathrm{diam}(\tilde{f}^{k_{n}}(D_{b})) \geq C l_{\mathcal{G}}(\gamma_{n})-C'.$$
Therefore
$$ \frac{\mathrm{diam}(\tilde{f}^{k_{n}}(D_{b}))}{k_{n}} \geq \frac{C l_{\mathcal{G}}(\gamma_{n})}{N (l_{\mathcal{G}}(\gamma_{n})+1)}- \frac{C'}{N(l_{\mathcal{G}}(\gamma_{n})+1)}.$$
the right hand side of this inequality has a positive limit as $n \rightarrow + \infty$. Moreover, the sequence $(|k_{n}|)_{n}$ has to tend to $+\infty$: otherwise, one of the sets of the form $\tilde{f}^{l}(D_{b})$, with $l$ in $\mathbb{Z}$, would have infinite diameter as it would cross infinitely many sets of the form $\gamma_{n}(D_{b})$. This contradicts the hypothesis $\lim_{n \to +\infty} \frac{\mathrm{diam}(\tilde{f}^{n}(D_{b}))}{n}=0$ (recall that this hypothesis is independent of the chosen fundamental domain).
\end{proof}

We now want to apply Lemma \ref{inf} to complete this first step. We have first to check that the family of essential arcs we will consider satisfies the hypothesis of this lemma. For any essential arc $c$ of $\tilde{S}$ we denote by $c^{0}$ the arc with the opposite orientation. We denote by $\mathcal{C}$ the set of connected components of $\partial \tilde{S}$ which lie on the left of $\tilde{\beta}_{0}$.

Recall first that the essential arcs among the lifts of the arc $\beta_{0}$ which lie strictly on the right of $\tilde{\beta}_{0}$ are the curves of the form $\gamma(\tilde{\beta}_{0})$, where $\gamma$ belongs to $a \pi_{1}(S)$. Therefore the arc $\tilde{\beta}_{0}$ lies strictly on the left of the curves of the form $\gamma(\tilde{\beta}_{0})$, with $\gamma \in a \pi_{1}(S)-a\pi_{1}(S)a^{-1}$, as the curve $\gamma^{-1}(\tilde{\beta}_{0})$ is strictly on the left of the curve $\tilde{\beta}_{0}$. Moreover, the arc $\tilde{\beta}_{0}$ lies strictly on the right of the curves of the form $\gamma(\tilde{\beta}_{0})$, with $\gamma \in a\pi_{1}(S) a^{-1}$. By this discussion, the components in $\mathcal{C}$ lie strictly on the left of the curves of the form $\gamma(\tilde{\beta}_{0})$, hence also of the form $\gamma \tilde{f}^{k}(\tilde{\beta}_{0})$, with $k \in \mathbb{Z}$ and $\gamma \in a \pi_{1}(S)-a\pi_{1}(S)a^{-1}$. They also lie strictly on the left of the curves of the form $\gamma \tilde{f}^{k}(\tilde{\beta}_{0}^{0})$, with $k \in \mathbb{Z}$ and $\gamma \in a \pi_{1}(S)a^{-1}$.

For any $1 \leq i \leq b$, denote by $A_{i}$ the subset of $a\pi_{1}(S)$ consisting of deck transformations $\gamma$ such that $D_{b}$ lies strictly on the left of the arc $\gamma(\tilde{\alpha}_{i})$. Denote by $A_{i}^{c}$ the complement of this set in $a \pi_{1}(S)$.
Consider the family $\mathcal{F}$ of essential arcs consisting of the arcs which meet the arc $\tilde{\beta}_{0}$ of one of the following forms.
\begin{enumerate}
\item $\gamma \tilde{f}^{k}(\tilde{\beta}_{0}), \gamma \in a \pi_{1}(S)-a\pi_{1}(S)a^{-1}, \left| k \right| \leq N l_{\mathcal{G}}(\gamma)$.
\item $\gamma \tilde{f}^{k}(\tilde{\beta}_{0}^{0}), \gamma \in a\pi_{1}(S)a^{-1}, \left| k \right| \leq N l_{\mathcal{G}}(\gamma)$.
\item $\gamma\tilde{f}^{k}(\tilde{\alpha_{i}}), 1 \leq i \leq b, \gamma \in A_{i}, \left| k \right| \leq N(l_{\mathcal{G}}(\gamma)+1)$
\item $\gamma \tilde{f}^{k}(\tilde{\alpha}_{i}^{0}), 1 \leq i \leq b, \gamma \in A_{i}^{c}, \left| k \right| \leq N(l_{\mathcal{G}}(\gamma)+1)$
\end{enumerate}

By Lemma \ref{finitenumber}, the family $\mathcal{F}$ is finite. Take $\tilde{\beta}_{1}=\inf_{\tilde{\beta}_{0}}( \mathcal{F}).$ Let us check that this curve satisfies the wanted properties.

By construction of the curve $\tilde{\beta}_{1}$, for any deck transformation $\gamma$ in $a \pi_{1}(S)$ and any integer $\left| k \right| \leq N (l_{\mathcal{G}}(\gamma)+1)$, the curves of the form $\gamma \tilde{f}^{k}(\tilde{\alpha}_{i})$ lie on the right of the curve $\tilde{\beta}_{1}$. 

Now, take a deck transformation $\gamma \in a \pi_{1}(S)- a \pi_{1}(S)a^{-1}$ and $\left| k \right| \leq N l_{\mathcal{G}}(\gamma)$. Observe that, by the uniqueness part of Lemma \ref{inf}, $\gamma\tilde{f}^{k}(\tilde{\beta}_{1})=\inf_{\gamma \tilde{f}^{k}(\tilde{\beta}_{0})}(\gamma\tilde{f}^{k}(\mathcal{F}))$, where $\gamma\tilde{f}^{k}(\mathcal{F})= \left\{ \gamma \tilde{f}^{k}(\tilde{\beta}), \tilde{\beta} \in \mathcal{F} \right\}$. It is easy to check that any curve in $\gamma\tilde{f}^{k}(\mathcal{F})$ is either equal to one of the curves in $\mathcal{F}$ or lies strictly on the right of $\tilde{\beta}_{0}$. Hence any of these curves lie on the right of $\tilde{\beta}_{1}$. As the curve $\tilde{f}(\tilde{\beta}_{0})$ also lies on the right of the curve $\tilde{\beta}_{1}$ by construction, we deduce that the curve $\gamma\tilde{f}^{k}(\tilde{\beta}_{1})$ lies on the right of the curve $\tilde{\beta}_{1}$, by Lemma \ref{inf}.

Finally, take a deck transformation $\gamma \in a \pi_{1}(S)a^{-1}$ and $\left| k \right| \leq N l_{\mathcal{G}}(\gamma)$. As the arc $\tilde{\beta}_{1}$ lies on the left of the arc $\tilde{\beta}_{0}$, the arc $\gamma \tilde{f}^{k}(\tilde{\beta}_{1})$ lies on the left of the arc $\gamma \tilde{f}^{k}(\tilde{\beta}_{0})$. As the arc $\tilde{\beta}_{1}$ lies on the right of the arc  $\gamma \tilde{f}^{k}(\tilde{\beta}_{0})$ by construction and as the arc $\gamma \tilde{f}^{k}(\tilde{\beta}_{0})$ is on the right of the arc $\tilde{\beta}_{1}$ and of the arc $\gamma \tilde{f}^{k}(\tilde{\beta}_{1})$, the arc $\gamma \tilde{f}^{k}(\tilde{\beta}_{1})$ lies on the right of the arc $\tilde{\beta}_{1}$.

\textbf{Second step.} \emph{ We prove that we can perturb the arc $\tilde{\beta}_{1}$ to obtain an arc $\tilde{\beta}'_{1}$ such that
\begin{enumerate}
\item For any deck transformation $\gamma$ in $a \pi_{1}(S)$ and any integer $\left| k \right| \leq N (l_{\mathcal{G}}(\gamma)+1)$, the curves of the form $\gamma \tilde{f}^{k}(\tilde{\alpha}_{i})$ lie \emph{strictly} on the right of the curve $\tilde{\beta}'_{1}$. 
\item For any deck transformation $\gamma$ in $a \pi_{1}(S)$ and any integer $\left| k \right| \leq N l_{\mathcal{G}}(\gamma)$, the curves of the form $\gamma \tilde{f}^{k}(\tilde{\beta}'_{1})$ lie \emph{strictly} on the right of the curve $\tilde{\beta}'_{1}$.
\end{enumerate}}

Let $M$ be the maximal length with respect to $\mathcal{G}$ of an element $\gamma$ in $a \pi_{1}(S)-a\pi_{1}(S)a^{-1}$ such that there exists $\left| k \right| \leq N l_{\mathcal{G}}(\gamma)$ with $\gamma \tilde{f}^{k}(\tilde{\beta}_{1}) \cap \tilde{\beta}_{1} \neq \emptyset$. As in the proof of Lemma \ref{finitenumber}, one can prove that $M$ is well-defined. Denote by $K$ the compact set consisting of points on $\tilde{\beta}_{1}$ which belong to an arc of the form $\gamma \tilde{f}^{k}(\tilde{\beta}_{1})$, where $l_{\mathcal{G}}(\gamma)=M$ and $\left| k \right| \leq N M$. By maximality of $M$, for any deck transformation $\gamma \in a \pi_{1}(S)-a\pi_{1}(S)a^{-1}$ and any $\left| k \right| \leq N l_{\mathcal{G}}(\gamma)$, the image under $\gamma \tilde{f}^{k}$ of $K$ is sent strictly on the right of $\tilde{\beta}_{1}$. Take a disjoint union $(U_{i})_{i}$ of open disks which cover $K$, such that, for any $i$, $\tilde{\beta}_{1} \cap U_{i}$ is connected and $U_{i}$ is sent strictly to the right of $\tilde{\beta}_{1}$ under a homeomorphism of the form $\gamma \tilde{f}^{k}$, with $\gamma \in a \pi_{1}(S)-a\pi_{1}(S)a^{-1}$ and $\left| k \right| \leq N l_{\mathcal{G}}(\gamma)$. Fix a parameterization of $\tilde{\beta}_{1}$. For any $i$, choose parameters $t_{1,i}<t_{2,i}$ such that $K \cap U_{i} \subset \tilde{\beta}_{1}((t_{1,i},t_{2,i})) \subset U_{i}$. Now, for each $i$, replace in $\tilde{\beta}_{1}$ the portion of arc $\tilde{\beta}_{1}(t_{1,i},t_{2,i})$ with a simple curve $c_{i} : [t_{1,i},t_{2,i}] \rightarrow \overline{U}_{i}$ such that $c_{i}((t_{1,i},t_{2,i}))$ lies strictly on the left of the arc $\tilde{\beta}_{1}$, $c_{i}(t_{1,i})=\tilde{\beta}_{1}(t_{1,i})$ and $c_{i}(t_{2,i})=\tilde{\beta}_{1}(t_{2,i})$. We obtain a new curve $\tilde{\beta}_{1,0}$.

Let us now list the properties of this curve $\tilde{\beta}_{1,0}$. 

First take a deck transformation $\gamma$ in $a \pi_{1}(S)-a \pi_{1}(S)a^{-1}$ with $l_{\mathcal{G}}(\gamma) \geq M$ and $\left| k \right| \leq N l_{\mathcal{G}}(\gamma)$. By construction, the curve $\gamma \tilde{f}^{k}(\tilde{\beta}_{1})$ is strictly on the right of the arc $\tilde{\beta}_{1,0}$. Moreover, by construction of the disks $U_{i}$, the curves $\gamma \tilde{f}^{k}(c_{i})$ are strictly on the right of the arc $\tilde{\beta}_{1}$. Hence these curves lie strictly on the right of the arc $\tilde{\beta}_{1,0}$ which lies on the left of the curve $\tilde{\beta}_{1}$ by construction. We deduce that the arc $\gamma \tilde{f}^{k}(\tilde{\beta}_{1,0})$ lies strictly on the right of the arc $\tilde{\beta}_{1,0}$. Moreover, the curve $\tilde{\beta}_{1,0}$ is strictly on the left of the curve $\gamma \tilde{f}^{k}(\tilde{\beta}_{1})$ as $\gamma^{-1} \notin a \pi_{1}(S)$.

For any deck transformation $\gamma$ in $a \pi_{1}(S) -a \pi_{1}(S)a^{-1}$ and any $\left| k \right| \leq N l_{\mathcal{G}}(\gamma)$, as the curves $\gamma \tilde{f}^{k}(c_{i})$ and the arc $\gamma \tilde{f}^{k}(\tilde{\beta}_{1})$ lie on the right of the arc $\tilde{\beta}_{1}$, the arc $\gamma \tilde{f}^{k}(\tilde{\beta}_{1,0})$ lies on the right of the arc $\tilde{\beta}_{1,0}$, which is itself on the left of $\gamma \tilde{f}^{k}(\tilde{\beta}_{1,0})$.

Take now an automorphism $\gamma \in a\pi_{1}(S)a^{-1}$ and $\left| k \right| \leq N l_{\mathcal{G}}(\gamma)$. Then the arc $\gamma \tilde{f}^{k}(\tilde{\beta}_{1,0})$ lies on the left of the arc $\gamma \tilde{f}^{k}(\tilde{\beta}_{1})$ and the arc $\tilde{\beta}_{1,0}$ lies on the right of the arc $\gamma \tilde{f}^{k}(\tilde{\beta}_{1})$, which is itself on the right of $\tilde{\beta}_{1}$, hence of $\tilde{\beta}_{1,0}$. Hence the arc $\gamma \tilde{f}^{k}(\tilde{\beta}_{1,0})$ lies on the right of the arc $\tilde{\beta}_{1,0}$.

Finally, for any index $1 \leq i \leq b$, any deck transformation $\gamma$ in $a \pi_{1}(S)$ and any integer $\left| k \right| \leq N (l_{\mathcal{G}}(\gamma)+1)$, as the arc $\gamma \tilde{f}^{k}(\tilde{\alpha}_{i})$ lies on the right of the arc $\tilde{\beta}_{1}$, it also lies on the right of the arc $\tilde{\beta}_{1,0}$.

Now repeat the same process with the curve $\tilde{\beta}_{1,0}$ instead of the curve $\tilde{\beta}_{1}$ to obtain a new arc $\tilde{\beta}_{1,1}$ and then repeat it to the arc $\tilde{\beta}_{1,1}$... until we obtain an essential arc $\tilde{\beta}_{1,M}$. This arc satisfies the following properties.
\begin{enumerate}
\item For any deck transformation $\gamma$ in $a \pi_{1}(S)-a \pi_{1}(S)a^{-1}$ and any integer $\left| k \right| \leq N l_{\mathcal{G}}(\gamma)$, the curves of the form $\gamma \tilde{f}^{k}(\tilde{\beta}_{1,M})$ lie strictly on the right of the curve $\tilde{\beta}_{1,M}$.
\item For any deck transformation $\gamma$ in $a \pi_{1}(S)a^{-1}$ and any integer $\left| k \right| \leq N l_{\mathcal{G}}(\gamma)$, the curves of the form $\gamma \tilde{f}^{k}(\tilde{\beta}_{1,M})$ lie on the right of the curve $\tilde{\beta}_{1,M}$.
\item For any deck transformation $\gamma$ in $a \pi_{1}(S)$ and any integer $\left| k \right| \leq N (l_{\mathcal{G}}(\gamma)+1)$, the curves of the form $\gamma \tilde{f}^{k}(\tilde{\alpha}_{i})$ lie on the right of the curve $\tilde{\beta}_{1,M}$. 
\end{enumerate}
Let $\tilde{\beta}'_{1}$ be an essential arc which lies strictly on the left of the arc $\tilde{\beta}_{1,M}$ and is sufficiently close to this arc so that the first above property remains true for the arc $\tilde{\beta}'_{1}$. Then, for any deck transformation $\gamma$ in $a \pi_{1}(S)$ and any integer $\left| k \right| \leq N (l_{\mathcal{G}}(\gamma)+1)$, the curves of the form $\gamma \tilde{f}^{k}(\tilde{\alpha}_{i})$ lie strictly on the right of the curve $\tilde{\beta}_{1}$. If we take an automorphism $\gamma$ in $a \pi_{1}(S)a^{-1}$ and an integer $\left| k \right| \leq N l_{\mathcal{G}}(\gamma)$, observe that the curve $\gamma \tilde{f}^{k}(\tilde{\beta}'_{1})$ lies strictly on the left of the curve $\gamma \tilde{f}^{k}(\tilde{\beta}_{1,M})$ which lies on the right of the curve $\tilde{\beta}_{1,M}$ and that the arc $\tilde{\beta}_{1,M}$ lies on the right of the arc $\gamma \tilde{f}^{k}(\tilde{\beta}_{1,M})$. Hence, the arc $\gamma \tilde{f}^{k}(\tilde{\beta}'_{1})$ lies strictly on the right of the arc $\tilde{\beta}'_{1}$.

Note that the curves of the form $\gamma \tilde{f}^{k}(\tilde{\beta}'_{1})$, where $\gamma \in \pi_{1}(S)a^{-1}- a \pi_{1}(S)a^{-1}$ and $\left|k \right| \leq N l_{\mathcal{G}}(\gamma)$, lie strictly on the left of the curve $\tilde{\beta}'_{1}$, as $\gamma \tilde{f}^{k}(\tilde{\beta}'_{1}) \cap \tilde{\beta}'_{1}= \gamma \tilde{f}^{k}(\tilde{\beta}'_{1} \cap \gamma^{-1} \tilde{f}^{-k}(\tilde{\beta}'_{1}))=\emptyset$ and $\gamma^{-1} \notin a \pi_{1}(S)$. After this second step, it is still possible that the curve $\tilde{\beta}'_{1}$ meets a curve of the form $\gamma \tilde{f}^{k} (\tilde{\beta}'_{1})$, with $\gamma \in \Gamma$ and $\left| k \right| \leq N l_{\mathcal{G}}(\gamma)$.

\textbf{Third step.} We finally construct the curve $\tilde{\beta}_{2}$ with the properties required by Lemma \ref{goodcurve}. To achieve this, we construct by induction a sequence of curves. Let $M'$ be the maximal length (with respect to $\mathcal{G}$) of an element $\gamma$ in $\Gamma \cup \pi_{1}(S) a^{-1} -a \pi_{1}(S) a^{-1}$ such that either there exist $\left| k \right| \leq N(l_{\mathcal{G}}(\gamma)+1)$ and $1 \leq i \leq b$ such that $\gamma \tilde{f}^{k}(\tilde{\alpha}_{i}) \cap \tilde{\beta}'_{1} \neq \emptyset$ or there exists $\left| k \right| \leq Nl_{\mathcal{G}}(\gamma)$ such that $\gamma \tilde{f}^{k}(\tilde{\beta}'_{1}) \cap \tilde{\beta}'_{1} \neq \emptyset$. One can prove that this maximum is well-defined by an argument similar to the proof of Lemma \ref{finitenumber}: otherwise there would be a contradiction with the hypothesis $\frac{\mathrm{diam}(\tilde{f}^{n}(D))}{n} \xrightarrow[n \to + \infty]{} 0$ for any fundamental domain $D \subset \tilde{S}$ for the action of the group $\pi_{1}(S)$.

Let $\tilde{\delta}_{M'+1}= \tilde{\beta}'_{1}$. We now construct by induction on $j \in [0,M+1]$ a curve $\tilde{\delta}_{j}$ whose endpoints $\tilde{\delta}_{j}(0)$ and $\tilde{\delta}_{j}(1)$ lie on the same components of $\partial \tilde{S}$ as the points $\tilde{\beta}_{0}(0)$ and $\tilde{\beta}_{0}(1)$ respectively with the following properties.
\begin{enumerate}
\item For any element $\gamma$ in $a \pi_{1}(S)$ and any integer $\left| k \right| \leq N (l_{\mathcal{G}}(\gamma)+1)$, the curves of the form $\gamma \tilde{f}^{k}(\tilde{\alpha}_{i})$ lie strictly on the right of the curve $\tilde{\delta}_{j}$. 
\item For any element $\gamma$ in $a \pi_{1}(S)$ and any integer $\left| k \right| \leq N l_{\mathcal{G}}(\gamma)$, the curves of the form $\gamma \tilde{f}^{k}(\tilde{\delta}_{j})$ lie strictly on the right of the curve $\tilde{\delta}_{j}$.
\item For any element $\gamma$ in $\Gamma \cup \pi_{1}(S) a^{-1} - a \pi_{1}(S) a^{-1}$ with $l_{\mathcal{G}}(\gamma) \geq j$ and any integer $\left| k \right| \leq N (l_{\mathcal{G}}(\gamma)+1)$, the curves of the form $\gamma \tilde{f}^{k}(\tilde{\alpha}_{i})$ lie strictly on the left of the curve $\tilde{\delta}_{j}$.
\item For any non-trivial element $\gamma$ in $\Gamma$ with $l_{\mathcal{G}}(\gamma) \geq j$ and any integer $\left| k \right| \leq N l_{\mathcal{G}}(\gamma)$, the curves of the form $\gamma \tilde{f}^{k}(\tilde{\delta}_{j})$ lie strictly on the left of the curve $\tilde{\delta}_{j}$.
\end{enumerate}
Then it suffices to take $\tilde{\beta}_{2}= \tilde{\delta}_{0}$ to complete the proof of Lemma \ref{goodcurve}.

Suppose that we have constructed an arc $\tilde{\delta}_{j+1}$ for $j \geq 0$ with the above properties. Let us build the arc $\tilde{\delta}_{j}$. Denote by $\mathcal{C}'$ the set of connected components of $\partial \tilde{S}$ which lie on the right of $\tilde{\delta}_{j+1}$ (or equivalently on the right of $\tilde{\beta}_{0}$). 
For any $1 \leq i \leq b$, denote by $B_{i}$ the subset of $\Gamma \cup \pi_{1}(S)a^{-1}- a \pi_{1}(S)a^{-1}$ consisting of deck transformations $\gamma$ such that $D_{b}$ lies on the left of $\gamma(\tilde{\alpha}_{i})$. Denote by $B_{i}^{c}$ the complement of this set in $\Gamma \cup \pi_{1}(S)a^{-1}- a \pi_{1}(S)a^{-1}$. Denote by $\mathcal{F}_{j}$ the family of essential arcs consisting of the following arcs.
\begin{enumerate}
\item The arcs of the form $\gamma\tilde{f}^{k}(\tilde{\alpha}_{i})$ where $1 \leq i \leq b$, $\left|k \right| \leq N(l_{\mathcal{G}}(\gamma)+1)$, $\gamma \in B_{i}^{c}$ and $l_{\mathcal{G}}(\gamma)=j$.
\item The arcs of the form $\gamma\tilde{f}^{k}(\tilde{\alpha}_{i}^{0})$ where $1 \leq i \leq b$, $\left|k \right| \leq N(l_{\mathcal{G}}(\gamma)+1)$, $\gamma \in B_{i}$ and $l_{\mathcal{G}}(\gamma)=j$.
\item The arcs of the form $\gamma \tilde{f}^{k}(\tilde{\delta}_{j+1}^{0})$ where $\left|k \right| \leq Nl_{\mathcal{G}}(\gamma)$, $\gamma \in \Gamma-\left\{ Id \right\}$ and $l_{\mathcal{G}}(\gamma)=j$.
\end{enumerate}

We want to define $\tilde{\delta}'_{j}= \sup_{\tilde{\delta}_{j+1}}(\mathcal{F}_{j})$. To do so, we first have to check that the family $\mathcal{F}_{j}$ satisfies the hypothesis of Lemma \ref{inf} (or more precisely of the lemma obtained from Lemma \ref{inf} by changing the word "left" with the word "right", see the remark below Lemma \ref{inf}).  By definition of the sets $B_{i}$, any component in $\mathcal{C}'$ lies on the left of any arc of the form $\gamma(\tilde{\alpha}_{i})$, with $\gamma \in B_{i}$, hence also of the arcs of the form $\tilde{f}^{k} \gamma(\tilde{\alpha}_{i})$ for any $k \in \mathbb{Z}$. Moreover, any component in $\mathcal{C}'$ lies on the right of any arc of the form $\tilde{f}^{k} \gamma(\tilde{\alpha}_{i})$, with $\gamma \in B_{i}^{c}$ and $k \in \mathbb{Z}$. Finally, take any $\gamma \in \Gamma$. Remember that the curve $\gamma^{-1}(\tilde{\beta}_{0})$ lies strictly on the left of the curve $\tilde{\beta}_{0}$, as $\gamma^{-1} \notin a\pi_{1}(S)$. Hence the arc $\tilde{\beta}_{0}$ lies strictly on the left of the curve $\gamma(\tilde{\beta}_{0})$. Therefore any component in $\mathcal{C}'$ lies on the left of $\gamma(\tilde{\beta}_{0})$ and also on the left of $\gamma \tilde{f}^{k}(\tilde{\delta}_{j+1})$ for any $k \in \mathbb{Z}$: the connected components of $\partial \tilde{S}$ met by the curve $\gamma(\tilde{\beta}_{0})$ are the same as those met by the curve $\gamma \tilde{f}^{k}(\tilde{\delta}_{j+1})$ and these curves are oriented in the same way.

We can apply Lemma \ref{inf} to obtain an essential arc $\tilde{\delta}'_{j}= \sup_{\tilde{\delta}_{j+1}}(\mathcal{F}_{j})$.

Let us study the properties of this curve.

First let us check that, for any element $\gamma$ in $a \pi_{1}(S)$ and any integer $\left| k \right| \leq N (l_{\mathcal{G}}(\gamma)+1)$, the curves of the form $\gamma \tilde{f}^{k}(\tilde{\alpha}_{i})$ lie strictly on the right of the curve $\tilde{\delta}'_{j}$. Fix such an element $\gamma_{0}$ in $a \pi_{1}(S)$, such an integer $k_{0}$ and $1 \leq i_{0} \leq b$. As one of the endpoints of $\gamma_{0} \tilde{f}^{k_{0}}(\tilde{\alpha}_{i_{0}})$ lies on a connected component of $\partial \tilde{S}$ which is strictly on the right of $\tilde{\beta}_{1}$ (hence of $\tilde{\delta}'_{j}$), it suffices to prove that $\gamma_{0} \tilde{f}^{k_{0}}(\tilde{\alpha}_{i_{0}}) \cap \tilde{\delta}'_{j}= \emptyset$. By definition of the arc $\tilde{\delta}'_{j}$, it suffices to prove that the arc $\gamma_{0} \tilde{f}^{k_{0}}(\tilde{\alpha}_{i_{0}})$ lies strictly on the right of the arc $\tilde{\delta}_{j+1}$ and of any arc of $\mathcal{F}_{j}$. By induction hypothesis, the arc $\gamma_{0} \tilde{f}^{k_{0}}(\tilde{\alpha}_{i_{0}})$ lies strictly on the right of the arc $\tilde{\delta}_{j+1}$. It also lies strictly on the right of the curves of the form $\gamma\tilde{f}^{k}(\tilde{\alpha}_{i})$ where $1 \leq i \leq b$, $\left|k \right| \leq N(l_{\mathcal{G}}(\gamma)+1)$, $\gamma \in B_{i}^{c}$ and $l_{\mathcal{G}}(\gamma)=j$: the curve $\gamma^{-1} \gamma_{0} \tilde{f}^{k_{0}-k}(\tilde{\alpha}_{i_{0}})$ is disjoint from the curve $\tilde{\alpha}_{i}$, as 
$$\left|k_{0}-k \right| \leq N(l_{\mathcal{G}}(\gamma)+l_{\mathcal{G}}(\gamma_{0})+2)=N(l_{\mathcal{G}}(\gamma^{-1}\gamma_{0})+2) \leq 3N l_{\mathcal{G}}(\gamma^{-1}\gamma_{0}),$$
by hypothesis on the curves $\tilde{\alpha}_{j}$ (recall that these curves satisfy $\mathcal{P}(3N)$, see the beginning of the proof of Proposition \ref{disjointcurvesboundary}). For the same reason, the arc $\gamma_{0} \tilde{f}^{k_{0}}(\tilde{\alpha}_{i_{0}})$ lies strictly on the right of the arcs of the form $\gamma\tilde{f}^{k}(\tilde{\alpha}_{i}^{0})$ where $1 \leq i \leq b$, $\left|k \right| \leq N(l_{\mathcal{G}}(\gamma)+1)$, $\gamma \in B_{i}$ and $l_{\mathcal{G}}(\gamma)=j$. Finally, let us prove that the arc $\gamma_{0} \tilde{f}^{k_{0}}(\tilde{\alpha}_{i_{0}})$ lies strictly on the right of the arcs of the form $\gamma \tilde{f}^{k}(\tilde{\delta}_{j+1}^{0})$ where $\left|k \right| \leq Nl_{\mathcal{G}}(\gamma)$, $\gamma \in \Gamma-\left\{ Id \right\}$ and $l_{\mathcal{G}}(\gamma)=j$. Notice that the deck transformation $\gamma^{-1} \gamma_{0}$ belongs either to $\Gamma$ or to $\pi_{1}(S)a^{-1}- a \pi_{1}(S) a^{-1}$ and that $l_{\mathcal{G}}(\gamma^{-1}\gamma_{0})=l_{\mathcal{G}}(\gamma)+l_{\mathcal{G}}(\gamma_{0})>j$. In both cases, the claim is true as the arc $\gamma^{-1}\gamma_{0} \tilde{f}^{k_{0}-k}(\tilde{\alpha}_{i_{0}})$ is disjoint from $\tilde{\delta}_{j+1}$ by induction hypothesis.

Let us see why, for any element $\gamma$ in $a \pi_{1}(S)$ and any integer $\left| k \right| \leq N l_{\mathcal{G}}(\gamma)$, the curves of the form $\gamma \tilde{f}^{k}(\tilde{\delta}'_{j})$ lie strictly on the right of the curve $\tilde{\delta}'_{j}$. Fix such an element $\gamma_{0}$ in $a \pi_{1}(S)$ and such an integer $k_{0}$. Here we distinguish the cases $\gamma_{0} \in a \pi_{1}(S)-a \pi_{1}(S)a^{-1}$ and $\gamma_{0} \in a \pi_{1}(S)a^{-1}$. In the first case, notice that the curve $\gamma_{0} \tilde{f}^{k_{0}}(\tilde{\delta}'_{j})$ is on the right of the curve $\gamma_{0} \tilde{f}^{k_{0}}(\tilde{\delta}_{j+1})$ by definition of $\tilde{\delta}'_{j}$. Hence it suffices to prove that this last arc is strictly on the right of any arc in $\mathcal{F}_{j}\cup \left\{ \tilde{\delta}_{j+1} \right\}$. To do this, it suffices to prove that any arc in $\mathcal{F}_{j} \cup \left\{ \tilde{\delta}_{j+1} \right\}$ is on the left of the arc $\gamma_{0} \tilde{f}^{k_{0}}(\tilde{\delta}_{j+1})$, which is easily done by using the induction hypothesis. Now suppose that $\gamma_{0} \in a \pi_{1}(S)a^{-1}$. As usual, as one of the endpoints of the arc $\gamma_{0} \tilde{f}^{k_{0}}(\tilde{\delta}_{j})$ is strictly on the right of the curve $\tilde{\delta}_{j}$, it suffices to prove that $\gamma_{0} \tilde{f}^{k_{0}}(\tilde{\delta}_{j}) \cap \tilde{\delta}_{j}= \emptyset$. To do this, it suffices to check that the image under $\gamma_{0}\tilde{f}^{k_{0}}$ of any essential arc in $\mathcal{F}_{j}\cup \left\{ \tilde{\delta}_{j+1} \right\}$ is disjoint from any arc in $\mathcal{F}_{j} \cup \left\{ \tilde{\delta}_{j+1} \right\}$ which can be done without serious difficulty by using the induction hypothesis and the properties of the curves $\tilde{\alpha}_{i}$.

By construction, the curves of the form $\gamma \tilde{f}^{k}(\tilde{\alpha}_{i})$ with $1 \leq i \leq b$, $\gamma \in \Gamma \cup \pi_{1}(S)a^{-1}- a \pi_{1}(S)a^{-1}$, $l_{\mathcal{G}}(\gamma)=j$ lie on the left of the arc $\tilde{\delta}'_{j}$. By induction hypothesis, as the arc $\tilde{\delta}_{j+1}$ lies on the left of the arc $\tilde{\delta}'_{j}$, the curves of the form $\gamma \tilde{f}^{k}(\tilde{\alpha}_{i})$ with $1 \leq i \leq b$, $\gamma \in \Gamma \cup \pi_{1}(S)a^{-1}- a \pi_{1}(S)a^{-1}$, $l_{\mathcal{G}}(\gamma)>j$ lie strictly on the left of the arc $\delta'_{j}$.

Finally, let us check that, for any non-trivial element $\gamma$ in $\Gamma$ with $l_{\mathcal{G}}(\gamma) \geq j$ and any integer $\left| k \right| \leq N l_{\mathcal{G}}(\gamma)$, the curves of the form $\gamma \tilde{f}^{k}(\tilde{\delta}'_{j})$ lie on the left of the curve $\tilde{\delta}'_{j}$. Fix such an element $\gamma_{0}\in \Gamma$ and such an integer $k_{0}$. This results from the following facts.
\begin{enumerate}
\item The arc $\gamma_{0} \tilde{f}^{k_{0}}(\tilde{\delta}'_{j})$ lies on the right of the arc $\gamma_{0} \tilde{f}^{k_{0}}(\tilde{\delta}_{j+1})$.
\item The arc $\tilde{\delta}'_{j}$ lies on the left of the arc $\gamma_{0} \tilde{f}^{k_{0}}(\tilde{\delta}_{j+1})$.
\item The arc $\gamma_{0} \tilde{f}^{k_{0}}(\tilde{\delta}_{j+1})$ is on the left of the arc $\tilde{\delta}'_{j}$.
\end{enumerate}

With arguments similar to those used during Step 2, we then perturb the arc $\tilde{\delta}'_{j}$ to obtain an arc $\tilde{\delta}_{j}$ which satisfies the required properties. 
\end{proof}

\end{document}